\definecolor{couleur_cite}{rgb}{0.05,.4,0.05}
\definecolor{couleur_link}{rgb}{0.05,0.05,0.4}
\newcommand{\SO}{\textrm{SO}}
\newcommand{\C}{{\mathbb C}}
\newcommand{\N}{{\mathbb N}}
\newcommand{\R}{{\mathbb R}}
\newcommand{\Z}{{\mathbb Z}}
\newcommand{\Q}{{\mathbb Q}}
\newcommand{\bs}{\backslash}
\renewcommand{\Im}{\operatorname{Im}}
\renewcommand{\Re}{\operatorname{Re}}
\newcommand{\Ad}{\operatorname{Ad}}
\newcommand{\Hom}{\operatorname{Hom}}
\newcommand{\vol}{\operatorname{vol}}
\newcommand{\SL}{\operatorname{SL}}
\newcommand{\eps}{\epsilon}
\newcommand{\ad}{\operatorname{ad}}
\renewcommand{\Re}{\operatorname{Re}}
\renewcommand{\Im}{\operatorname{Im}}
\newcommand{\CmC}{\mathscr{C}}
\newcommand{\af}{\mathfrak{a}}
\newcommand{\gf}{\mathfrak{g}}
  \newcommand {\mf}{{\mathfrak m}}
  \newcommand {\kf}{{\mathfrak k}}
  \newcommand {\nf}{{\mathfrak n}}
   \newcommand {\pf}{{\mathfrak p}}
\renewcommand {\H}{{\mathscr H}}
 \newcommand {\cO}{{\mathscr O}}
 \newcommand {\cH}{{\mathscr H}}
 \newcommand {\cP}{{\mathscr P}}
 \newcommand {\cL}{{\mathscr L}}
 \newcommand {\cM}{{\mathscr M}}
\newcommand {\cT}{{\mathscr T}}
\newcommand {\cD}{{\mathscr D}}
\newcommand {\cE}{{\mathscr E}}
\newcommand {\cC}{{\mathscr C}}
\newtheorem{theorem}{Theorem}[section]
\newtheorem{lemma}[theorem]{Lemma}
\newtheorem{proposition}[theorem]{Proposition}
\newtheorem{definition}[theorem]{Definition}
\newtheorem{corollary}[theorem]{Corollary}
\theoremstyle{remark}
\newtheorem{remark}{Remark}[section]
\newcommand{\InjR}{\operatorname{InjRad}}
\def\Xint#1{\mathchoice
   {\XXint\displaystyle\textstyle{#1}}%
   {\XXint\textstyle\scriptstyle{#1}}%
   {\XXint\scriptstyle\scriptscriptstyle{#1}}%
   {\XXint\scriptscriptstyle\scriptscriptstyle{#1}}%
   \!\int}
\def\XXint#1#2#3{{\setbox0=\hbox{$#1{#2#3}{\int}$}
     \vcenter{\hbox{$#2#3$}}\kern-.5\wd0}}
\def\dashint{\Xint-}
\title[Quantum ergodicity in higher rank]{Quantum ergodicity for compact quotients of $\SL_d(\R)/\SO(d)$ in the Benjamini--Schramm limit}
\author{Farrell Brumley}
\address{LAGA - Institut Galil\'ee\\
99 avenue Jean Baptiste Cl\'ement\\
93430 Villetaneuse\\
France}
\email{brumley@math.univ-paris13.fr}
\thanks{The first author was supported by ANR grant 14-CE25}
\author{Jasmin Matz}
\address{Department of Mathematical Science\\
University of Copenhagen\\
Universitetsparken 5\\
2100 Copenhagen\\
Denmark}
\email{matz@math.ku.dk}
\begin{document}
\begin{abstract}
We study the limiting behavior of Maass forms on sequences of large volume compact quotients of $\SL_d(\R)/\SO(d)$, $d\ge 3$, whose spectral parameter stays in a fixed
window. We prove a form of Quantum Ergodicity in this level aspect
which extends results of Le Masson and Sahlsten to the higher rank case.
\end{abstract}

\maketitle
\setcounter{tocdepth}{1}
\tableofcontents

\section{Introduction}

Let $Y$ be a closed  Riemann manifold. Let $\mathscr{B}=\{\psi_i\}$ be an orthonormal basis of $L^2(Y)$ consisting of Laplacian eigenfunctions $\Delta\psi_i=\lambda_i\psi_i$. A subsequence $\{\psi_{i_j}\}$ of $\mathscr{B}$ is called \textit{quantum ergodic} if, for every degree 0 pseudo-differential operator $A$ on $Y$ with principal symbol $a\in C(S^*Y)$, we have
\[
\langle A\psi_{i_j},\psi_{i_j}\rangle_{L^2(Y)}\rightarrow \dashint_{S^*Y} ad\mu_L,\qquad \text{as}\quad j\rightarrow\infty.
\]
Here $\mu_L$ is the Liouville measure on the cosphere bundle $S^*Y$ and $\dashint$ indicates normalization by the volume. The quantum ergodicity theorem of \v{S}nirel\textprime man \cite{Sch}, Zelditch \cite{Zel}, and Colin de Verdi\`ere \cite{CdV} states that, if the geodesic flow on $S^*Y$ is ergodic, one may extract from $\mathscr{B}$ a density one quantum ergodic subsequence. More precisely, they show
\begin{equation}\label{spectral-QE}
\frac{1}{N(\lambda)}\sum_{\lambda_i\leq \lambda}\bigg|\langle A\psi_i,\psi_i\rangle_{L^2(Y)}- \dashint_{S^*Y} ad\mu_L\bigg|^2\rightarrow 0
\end{equation}
as $\lambda\rightarrow\infty$, where $N(\lambda)=|\{i: \lambda_i\leq\lambda\}|$.

In this paper we are concerned with a version of quantum ergodicity where, in contrast to the above semi-classical statement in which the manifold is fixed and the eigenvalue goes to infinity (the large frequency regime), we allow the manifold to vary while keeping the eigenvalue constrained to a fixed spectral window (the large spacial regime).

The most natural setting in which one can formulate such variation is in the Gromov--Hausdorff space of pointed locally compact spaces -- or, rather, its space of probability measures -- where one has a notion of convergence due to Benjamini and Schramm \cite{BS}. Here, one may consider a sequence of manifolds which converge, almost everywhere, to their common universal cover. 
Under certain auxiliary conditions, the question of Quantum Ergodicity for Benjamini--Schramm convergent sequences was recently settled for a large class of rank one symmetric spaces \cite{LS, ABL}.
The aim of the present paper is to address this question for higher rank locally symmetric spaces; for the most part we restrict ourselves to compact quotients associated with $\SL_d(\R)$.

\subsection{The setting of locally symmetric spaces}
As our setting will henceforth be that of locally symmetric spaces, we begin by commenting on some of their particular features in higher rank, and review what is known for their quantum ergodic properties in the large frequency regime. 

We let $S$ be a Riemannian globally symmetric space of non-compact type: non-positively curved and having no Euclidean de Rham local factor. We may write $S=G/K$ where $G$ is a connected semisimple Lie group with finite center, and $K$ is a maximal compact subgroup. The rank of $S$ is defined to be the dimension of a maximal flat subspace in $S$; equivalently, it is the dimension of a maximal split torus $A$ in $G$.

In rank one, $S$ is of constant negative curvature, and $A$ gives rise to the geodesic flow on the cosphere bundle of $S$, via the identification of the latter  with $G/M$, where $M$ is the centralizer of $A$ in $K$. It is important to note, however, that when the rank is strictly greater than one, such as for $\SL_d(\R)/\SO(d)$ when $d\geq 3$, the curvature of $S$ is non-constant, as indeed any geodesic triangle in a maximal flat will be Euclidean.

Now let $\Gamma\subset G$ be a uniform lattice in $G$ and form $Y=\Gamma\backslash S$, a compact locally symmetric space with geometry $S$. In higher rank, due to the presence of maximal flats, the geodesic flow is not ergodic on the cosphere bundle $\Gamma\backslash G/M$ of $Y$. For this reason, the quantum ergodic statement \eqref{spectral-QE} cannot hold, as it is known to be equivalent with the ergodicity of the geodesic flow \cite[Theorem 1]{Zel-Survey}.

Nevertheless, the higher rank split torus $A$ does act ergodically -- in fact it is mixing -- on $\Gamma\backslash G$, and this is enough for one to expect similar quantum ergodic phenomena to that of \v{S}nirel\textprime man's theorem, at least if one refines the notion of eigenfunction as follows.

Recall that a Maass form on $Y$ is a function $\psi\in L^2(Y)$ which is a joint eigenfunction of the algebra $\mathscr{D}(S)$ of left-$G$-invariant differential operators on $S$. Since $\Delta\in\mathscr{D}(S)$ a Maass form is again a Laplacian eigenfunction, but in higher rank   $L^2(Y)$ can be further diagonalized. A Maass form $\psi$ gives rise to an algebra homomorphism $\chi\in \Hom_{\C-{\rm alg}}(\mathscr{D}(S),\C)$ verifying $D\psi=\chi(D)\psi$ for all $D\in\mathscr{D}(S)$. The Harish-Chandra homomorphism allows one to realize $\chi$ as the Weyl group orbit of an element $\nu$ in $\mathfrak{a}^*_\C$, the complexification of the dual of the Lie algebra $\af$ of $A$. We call $\nu$ the \textit{spectral parameter} of $\psi$.

One can then formulate a natural extension of \eqref{spectral-QE} for Maass forms on $Y$ with growing spectral parameter. To the best of our knowledge, this form of Quantum Ergodicity has not yet been established, although recent work by Nelson and Venkatesh \cite{NV} on microlocal analysis and representation theory should shed light on this problem. 

\begin{remark}
There are of course harder conjectures that one can formulate here, such as Quantum \textit{Unique} Ergodicity \cite[Conjecture 1.2]{Lind1}. In the arithmetic setting, certain higher rank congruence manifolds have been shown \cite{SiVe1, SiVe2} to satisfy the Arithmetic Quantum Unique Ergodicity property, a generalization of the work (and techniques) of Lindenstrauss \cite{Lind}. We will not make further comments on this active line of research, and refer the reader to \cite[\S 1.3]{LS} for a discussion of a conjectural strengthening of Quantum Ergodicity for varying manifolds.
\end{remark}

\subsection{Our main result}
We now pass to the large spacial regime for higher rank locally symmetric spaces $Y=\Gamma\bs S$, and allow $\Gamma$ to vary along a non-conjugate sequence of torsion free uniform lattices with growing covolume. 

We furthermore require that the sequence $\Gamma$ be uniformly discrete, a property which we now recall. Let $d$ be the Riemannian distance on $S$. Then the local injectivity radius about a point $x\in Y$ is the quantity
\[
{\rm InjRad}_\Gamma(x)=\frac12\min\{ d(x,\gamma.x): 1\neq \gamma\in \Gamma\}.
\]
 The global injectivity radius ${\rm InjRad}(Y)$ is the infimum of ${\rm InjRad}_\Gamma(x)$ over all $x\in Y$; this is strictly positive since $Y$ is compact. We say that that a sequence of uniform torsion free lattices $\Gamma_n$ is \textit{uniformly discrete} if ${\rm InjRad}(Y_n)$ is bounded away from zero. A conjecture of Margulis \cite{Mar} states that this is automatic in higher rank; this is a weak form of the Lehmer conjecture on monic integral polynomials.

Our precise result is as follows. We refer to \S\ref{sec:gen-notation} for any unexplained notation.

\begin{theorem}\label{thm:QE}
Let $d\geq 3$. Let $\Gamma_n\subset\SL_d(\R)$ be a uniformly discrete sequence of torsion free cocompact lattices such that $\vol(Y_n)\rightarrow\infty$ as $n\rightarrow\infty$, where $Y_n=\Gamma_n\backslash\SL_d(\R)/{\rm SO}(d)$. Let $a_n$ be a sequence of uniformly bounded, measurable functions on $Y_n$.  

There is $\varrho>1$ such that for sufficiently regular $\nu\in i\af^*$ we have
\[
\frac{1}{N(B_0(\nu,\varrho),\Gamma_n)}\sum_{j:\,\nu_j^{(n)}\in B_0(\nu,\varrho)} \bigg|\langle a_n\psi_j^{(n)}, \psi_j^{(n)}\rangle_{L^2(Y_n)}-\dashint_{Y_n}a_n d\vol_{Y_n}\bigg|^2\rightarrow 0
\]
as $n\rightarrow\infty$, where $\dashint$ denotes the normalization of the integral by the volume of $Y_n$,
\[
B_0(\nu,\varrho)=\{\lambda \in i\mathfrak{a}^*: \|\lambda-\nu\|_2\leq\varrho\}
\]
is the ball of radius $\varrho$ in the unramified tempered spectrum, and 
\begin{equation}\label{defn:N(B,Gamma)}
N(B_0(\nu,\varrho),\Gamma_n)=|\{j: \nu_j^{(n)}\in B_0(\nu,\varrho)\}|.
\end{equation}
\end{theorem}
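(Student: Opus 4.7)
The strategy generalizes the wave-averaging approach of Le Masson--Sahlsten \cite{LS} from the rank one setting, where one averages along the geodesic flow, to rank $\geq 2$, where one must average along the full split Cartan $A$ acting (mixingly) on $\Gamma_n\bs G$. Since $\langle(a_n-\bar{a}_n)\psi_j^{(n)},\psi_j^{(n)}\rangle=\langle a_n\psi_j^{(n)},\psi_j^{(n)}\rangle-\bar{a}_n$ where $\bar{a}_n=\dashint_{Y_n}a_n\,d\vol_{Y_n}$, we may assume that $a_n$ has mean zero. Fix a Weyl-invariant, non-negative Paley--Wiener function $h$ on $i\mathfrak{a}^*$ with $h\geq\one_{B_0(\nu,\varrho)}$, and let $k_h$ be its inverse spherical transform, a bi-$K$-invariant Schwartz function on $G$. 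The automorphic convolution operator $K_h$ with kernel $K_h^{\Gamma_n}(x,y)=\sum_{\gamma\in\Gamma_n}k_h(x^{-1}\gamma y)$ acts by $h(\nu_j^{(n)})\geq 1$ on each Maass form with $\nu_j^{(n)}\in B_0(\nu,\varrho)$, so positivity of $h$ majorizes the variance sum by a Hilbert--Schmidt trace:
\[
\sum_{j:\,\nu_j^{(n)}\in B_0(\nu,\varrho)}\big|\langle a_n\psi_j^{(n)},\psi_j^{(n)}\rangle\big|^2\leq\tr\big((M_{a_n}K_h)^2\big)=\int_{Y_n\times Y_n} a_n(x)\, a_n(y)\, K_h^{\Gamma_n}(x,y)^2\,dx\,dy.
\]

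The key new input in higher rank is amplification via $A$. One replaces $k_h$ by a propagated kernel $k_h^T$ obtained by smoothly averaging over a long orbit segment $\{g_t:0\leq t\leq T\}$ in a regular direction $g_T\in A^+$; up to a controllable multiplier on the fixed spectral window, this preserves the spectral action on $B_0(\nu,\varrho)$ while injecting an $A$-time average into the geometric side. Lifting $a_n$ to a $K$-invariant function $\tilde a_n$ on $\Gamma_n\bs G$, one then rewrites the trace in terms of inner products against the time average $\tfrac{1}{T}\int_0^T \tilde a_n\circ g_t\,dt$, whose $L^2(\Gamma_n\bs G)$-norm tends to zero as $T\to\infty$ by the mixing of the regular $A$-action, quantitatively via polynomial (or exponential) decay of tempered spherical matrix coefficients. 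The hypothesis that $\nu$ be sufficiently regular is crucial here, since it keeps the window $B_0(\nu,\varrho)$ in the uniformly tempered part of $i\mathfrak{a}^*$ and preserves uniform decay rates.

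To convert the trace back into a lattice sum, one expands $K_h^{\Gamma_n}(x,y)^2$ as a double sum over $\Gamma_n$, collapses indices via $\beta=\gamma^{-1}\gamma'$, and unfolds the $y$-integral to the universal cover $S$. Uniform discreteness guarantees that only $\beta=e$ contributes within the ball of radius $2\InjR(Y_n)$, producing a diagonal term controlled by $\|k_h^T\|_{L^2(G)}^2\|a_n\|_\infty^2$, while $\beta\ne e$ contributions are suppressed by Harish-Chandra asymptotics for $k_h^T$ in the Cartan direction. Dividing by the local Weyl count $N(B_0(\nu,\varrho),\Gamma_n)\asymp c(\nu)\vol(Y_n)$, which holds under Benjamini--Schramm convergence with a Plancherel-type uniform error, normalizes the diagonal contribution into a constant which, combined with the smallness gained from the $A$-time average, produces the desired decay. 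The principal obstacle is the simultaneous calibration of the three scales --- the fixed spectral width $\varrho$, the propagation time $T=T(n)\to\infty$, and the growing injectivity radius $\InjR(Y_n)$ --- together with establishing effective, uniform-in-lattice quantitative mixing of $A$ on $\Gamma_n\bs G$ sharp enough to dominate the lattice-sum contribution after normalization by the Weyl count.
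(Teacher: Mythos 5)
Your proposal captures the broad architecture shared with the paper --- reduce to mean-zero $a_n$, majorize the variance sum by a Hilbert--Schmidt trace, time-average to exploit mixing, invoke property (T) / spectral gap for uniformity in $\Gamma_n$, normalize by the Weyl count, and absorb the remaining error via Benjamini--Schramm convergence. However, the design of your propagator, and the places where you write ``up to a controllable multiplier,'' are precisely where the higher-rank difficulty lives, and there the proposal has genuine gaps.

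First, you propose propagating the localizer $k_h$ by averaging over a single regular $A$-orbit segment $\{g_t: 0\le t\le T\}$. But a kernel obtained this way is not bi-$K$-invariant, so the resulting operator does not act diagonally on Maass forms; once you bi-$K$-symmetrize (which you must, in order for the spectral-multiplier picture to make sense), you are forced to choose the \emph{shape} of the resulting $K$-double coset set, and this choice is not innocuous. The paper takes $E_t = K\exp(\mathscr{P}^+_t)K$, a polytopal (sup-norm) set rather than a geodesic ball, and this is one of the two advertised higher-rank innovations: the intersection-volume estimate $m_G(gE_t\cap E_t)\ll e^{-\langle\rho,Y\rangle}m_G(E_t)$ (Proposition \ref{prop:upper:bound:intersection}) exploits an Iwasawa/Frobenius-norm decomposition that would not go through for geodesic balls (since $S$ is only $\mathrm{CAT}(0)$ in rank $\ge 2$), nor obviously for a thin tube around a single $A$-direction. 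Your proposal never confronts the intersection-volume problem, which governs the geometric side.

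Second, and more seriously, the phrase ``up to a controllable multiplier on the fixed spectral window, this preserves the spectral action'' quietly asserts a lower bound on the time-averaged spectral multiplier $\frac{1}{\tau}\int_0^\tau |h_t(\lambda)|^2\,dt$ over a compact $\Omega\subset i\af^*$. In rank one this is an easy trigonometric computation; in higher rank it is the content of the paper's Theorem \ref{thm:spec} and all of Section \ref{sec:spectral} and Appendices \ref{appendix:X0}--\ref{sec:cones}. It requires replacing the spherical function by its Harish-Chandra asymptotic expansion relative to the Levi determined by $X^0$, exploiting a delicate periodicity of the main term $\Phi_\lambda$ along the ray $\R X^0$ for rational $\lambda$, an absolute convergence argument over the cone $\cC^0$, Brion's formula for exponential integrals over polytopes, and a linear-independence-of-characters argument. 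None of this is anticipated in your sketch, and there is no indication of why the multiplier should be controllable, only that it needs to be.

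Third, a smaller point: the mixing input the paper uses is the Nevo mean ergodic theorem $\|\rho^0_{\Gamma\backslash G}(E)\|_2\le C\,m_G(E)^{-\theta}$ applied to the \emph{sets} $gE_t\cap E_t$, rather than decay of matrix coefficients of the one-parameter $A$-flow directly. These are morally equivalent via integrability exponents and Kazhdan's property (T) for $d\ge 3$, so your instinct is not wrong, but the precise formulation matters for how the decay plugs into the Hilbert--Schmidt bound via Lemma \ref{lem:L2:AX} and Proposition \ref{prop-2int}. In summary: your proposal outlines the right skeleton, but the two technical pillars that make the argument work in higher rank --- the polytopal averaging set $E_t$ with its intersection-volume estimate, and the spherical-function lower bound --- are absent, and they are not easy to supply.
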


This extends to higher rank the rank one version of the same result in the papers \cite{LS} (for hyperbolic surfaces) and \cite{ABL} (for higher dimensional hyperbolic manifolds), which themselves built upon the breakthrough results of Anantharaman and Le Masson \cite{AL} for large regular graphs. All of these works imposed the following additional hypotheses:
\begin{enumerate}[label=(\roman{*})]
\item that the manifolds (or graphs) $Y_n$ converge to $S$ \textit{in the sense of Benjamini--Schramm}: for every $R>0$ we have
\[
\frac{\vol\left((Y_n)_{\leq R}\right)}{\vol(Y_n)}\rightarrow 0
\]
as $n\rightarrow\infty$, where the $R$-thin part of $Y_n$ is defined as
\begin{equation}\label{R-thin-part}
(Y_n)_{\leq R}=\{x\in Y_n: {\rm InjRad}_{\Gamma_n}(x)\leq R\};
\end{equation}
\item that the $Y_n$ have a uniform spectral gap. 
\end{enumerate}
The proof of Theorem \ref{thm:QE} also requires those properties, but they are automatic in higher rank; see \cite{Kazhdan} and \cite{AB+}*{\S 4}.

We note that in \cite{ABL} the large frequency regime for Laplacian eigenfunctions was interpreted as the Benjamini--Schramm convergence to Euclidean space via a rescaling of the Riemannian metric.

\begin{remark}
We point out two differences between the Quantum Ergodicity theorem of the recent preprint \cite{ABL} for rank one spaces and the higher rank version we present in Theorem \ref{thm:QE}. In contrast to our result, the authors of \cite{ABL} allow for two more general features, namely
\begin{enumerate}[label=(\alph*)]
\item\label{shrinking} they allow the spectral window to shrink with $n$, thereby isolating in the large $n$ limit a fixed tempered eigenvalue for the universal cover. This added flexibility was also present in \cite[Theorem 1.3]{AL};
\item\label{generalA} they take more general operators than scalar multiplication by functions $a_n$. This more advanced formulation was first put forward in \cite[Theorem 1.7]{AL}, using the pseudo-differential calculus for trees developed in \cite{L}.
\end{enumerate}
Our Theorem \ref{thm:QE} therefore more closely resembles the main results of \cite{BLL} and \cite{LS}, in which neither of these two features is present. By taking more elaborate test functions, we believe we can incorporate \ref{shrinking} into our set-up. By contrast, we have so far been unable to extend \ref{generalA} to this higher rank setting.
\end{remark}

\subsection{Comments on the proof}
To prove the theorem we first reduce the assertion to several intermediate statements as explained in Section \ref{sec:reduction}. This reduction roughly follows along the lines of \cite{LS} and the subsequent paper \cite{ABL}. To establish these intermediate results, however, we need several new ideas to deal with a number of issues that arise only in rank 2 or higher:
\begin{itemize}
\item The first main reduction step involves the use of a normalized averaging operator on $S=G/K$ (a kind of wave propagation) with expanding support $C_t$, $t\rightarrow\infty$. At a later critical point we need to estimate from above the volume of intersections $C_t\cap g C_t$ with $g\in G$. 
In \cite{LS, ABL} they work with $C_t$ being the metric ball $B_t$ in $S$ of radius $t$, and exploit the fact that $S$ is a ${\rm CAT}(-1)$ space in rank $1$.
In higher rank, $S$ is only a ${\rm CAT}(0)$ space, and working with intersection of metric balls becomes problematic. We therefore need to define new types of $C_t$ that look more `polytopal'  and are easier to work with in higher rank; see \eqref{Et-polar} and  Section \ref{subsec:intersection}.

\item We need to establish a suitable lower bound for certain averages of spherical functions. In rank 1, this can be dealt with in a  relatively straightforward manner, as the elementary spherical functions are basically linear combinations of trigonometric functions in one variable. In higher rank, we need to deal with linear combinations of exponential functions in several variables which makes the analysis much more delicate; see Section \ref{sec:spectral}. The techniques of that section might also be of interest in other contexts.

\end{itemize}

Additionally, we need to establish a type of local Weyl law/limit multiplicity for the $Y_n$ that gives a lower count the number of eigenvalues locally around sufficiently regular points in the spectrum of $Y_n$. We only need that sharp lower bound in the level aspect as stated in Proposition \ref{thm:spec-var}, but along the way prove a stronger version that also yields the right order in the spectral parameter, see \eqref{more-precise}. Such a result is also necessary for the rank $1$ situation, but involves a much more careful analysis of the non-tempered spectrum in higher rank.

\section{Outline of proof and reduction steps}\label{sec:reduction}

In this section, we shall reduce the proof of Theorem \ref{thm:QE} to that of two auxiliary estimates: one spectral, one geometric. Let $G=\SL_d(\R)$ and $K=\SO(d)$. As a preliminary step, we note that by replacing $a_n$ by $a_n-\dashint_{Y_n} a_n$ we may suppose that the measurable, right-$K$-invariant functions $a_n$ on $\Gamma_n\backslash G$ satisfy
\begin{equation}\label{function-hypoth}
\int_{\Gamma_n\backslash G} a_n=0  \qquad \text{and}\qquad  \|a_n\|_\infty\leq 1.
\end{equation}
Under that assumption it will then be enough to prove
\begin{equation}\label{lem:reduction}
\frac{1}{N(B_0(\nu,\varrho),\Gamma_n)}\sum_{j:\,\nu_j^{(n)}\in B_0(\nu,\varrho)} |\langle a_n\psi_j^{(n)}, \psi_j^{(n)}\rangle_{L^2(Y_n)}|^2\rightarrow 0
\end{equation}
as $n\rightarrow \infty$. The rest of the paper is devoted to establishing \eqref{lem:reduction}. 

\subsection{Spectral estimate}
We shall first need to control the spectral counting function defined in \eqref{defn:N(B,Gamma)}. This is provided in the following result, a sharp spectral lower bound, to be proved in Section \ref{sec:Weyl}. See \S\ref{sec:gen-notation} for our conventions on the dependence on the implied constants on the sufficiently regular assumption.

\begin{proposition}\label{thm:spec-var}
Let $G$ be a connected non-compact simple Lie group with finite center, and $K$ a maximal compact subgroup. Let $S=G/K$ be the associated irreducible Riemannian globally symmetric space of non-compact type. Let $\Gamma_n$ be a sequence of uniformly discrete, torsion free, cocompact lattices in $G$ such that $Y_n=\Gamma_n\backslash S$ converges, in the sense of Benjamini--Schramm, to $S$. There is $\varrho>1$ such that, for any sufficiently regular $\nu\in i\af^*$, 
\[
\vol(\Gamma_n\backslash G)\ll N(B(\nu,\varrho),\Gamma_n),
\]
where $B(\nu,\varrho)=\{\lambda\in\af_\C^*\mid \|\lambda - \nu\|_2<\varrho\}$, and $N(B(\nu,\varrho),\Gamma_n)=|\{j\mid \nu_j^{(n)}\in B(\nu,\varrho)\}|$ is the counting function.
\end{proposition}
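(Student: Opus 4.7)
The plan is to apply the Selberg trace formula for the cocompact lattice $\Gamma_n$ to a bi-$K$-invariant, compactly supported test function $k$ whose spherical transform $h$ is non-negative on $i\af^*$ and localized near $\nu$, then use Benjamini--Schramm convergence to isolate the identity contribution on the geometric side. Concretely, pick a smooth Paley--Wiener bump $\phi_0$ on $\af_\C^*$ of small exponential type, concentrated near the origin on $i\af^*$ with $\phi_0(0) \neq 0$; set $\hat\phi(\lambda) := \sum_{w\in W} \phi_0(w\lambda - \nu)$ (to make it $W$-invariant) and put $h := |\hat\phi|^2$. Then $h$ is real and non-negative on $i\af^*$, bounded below on a neighborhood of $\nu$, and essentially supported on $i\af^*$ in the ball of radius $\varrho/2$ about $\nu$, for some absolute $\varrho > 1$. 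By the Harish-Chandra spherical Paley--Wiener theorem, $h$ is the spherical transform of a bi-$K$-invariant, compactly supported function $k = \phi * \phi^*$ on $G$, whose support radius is controlled by the exponential type of $\phi_0$.

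Applied to this $k$, the Selberg trace formula reads
\[
\sum_{\pi} m(\pi,\Gamma_n)\,h(\nu_\pi) = \vol(\Gamma_n\bs G)\,k(e) + \sum_{[\gamma]\neq[e]} \vol(\Gamma_{n,\gamma}\bs G_\gamma)\,O_\gamma(k),
\]
the spectral sum running over spherical unitary irreducibles in $L^2(\Gamma_n\bs G)$. Harish-Chandra's Plancherel formula identifies $k(e) = \int_{i\af^*/W} h(\lambda)\,|\mathbf{c}(\lambda)|^{-2}\,d\lambda$, and since the Plancherel density is bounded below on a neighborhood of any sufficiently regular $\nu$, one has $k(e) \gg_\nu 1$ uniformly in $n$. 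For the non-identity geometric terms, the compactness of $\operatorname{supp} k$ together with uniform discreteness forces any contributing $\gamma \neq e$ to fix a point in the $R$-thin part $(Y_n)_{\leq R}$ for some $R$ depending only on $\operatorname{supp} k$; standard manipulations then bound their total contribution by $O(\vol((Y_n)_{\leq R})) = o(\vol(Y_n))$ via the Benjamini--Schramm hypothesis.

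On the spectral side, I would split by tempered versus non-tempered $\pi$. Since $h \geq 0$ and is bounded on $i\af^*$,
\[
\sum_{\pi\text{ tempered}} m(\pi,\Gamma_n)\,h(\nu_\pi) \leq \|h\|_\infty \cdot N(B(\nu,\varrho),\Gamma_n).
\]
For non-tempered unitary spherical $\pi$ the parameter $\nu_\pi \in \af_\C^*$ satisfies $\|\Re \nu_\pi\| \leq \|\rho\|$, so Paley--Wiener gives the decay $|h(\nu_\pi)| \ll_N (1+\|\Im\nu_\pi\|)^{-N}$ uniformly on this strip; combined with a Sauvageot-type limit multiplicity estimate in the Benjamini--Schramm regime, the total non-tempered spectral contribution is $o(\vol(Y_n))$. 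Assembling the four pieces yields $c_\nu\,\vol(Y_n) \leq \|h\|_\infty\,N(B(\nu,\varrho),\Gamma_n) + o(\vol(Y_n))$, which rearranges to the desired bound.

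The main obstacle is precisely the non-tempered spectral contribution. In rank one the complementary series forms a short bounded segment that stays away from any regular tempered $\nu$, and a crude multiplicity bound suffices. In higher rank the non-tempered unitary spherical dual is a more intricate union of bounded pieces of affine subspaces in $\af_\C^*$, and the required $o(\vol(Y_n))$ bound demands a careful interplay between Paley--Wiener control on $h$ over the complex spectrum and a sufficiently strong limit multiplicity estimate for non-tempered automorphic representations of $\Gamma_n$---precisely the ``much more careful analysis of the non-tempered spectrum in higher rank'' flagged in the introduction.
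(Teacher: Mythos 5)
Your overall strategy --- build a non-negative spherical test function of Paley--Wiener type concentrated near $\nu$, run the trace formula, control the non-identity geometric terms via uniform discreteness and the Benjamini--Schramm hypothesis, and then show that the smoothed spectral count concentrates on $B(\nu,\varrho)$ --- is exactly the architecture of the paper's proof (with $k=k_{\nu,1}$ from Lemma~\ref{lemma:test-fn} playing the role of your $\phi*\phi^*$, the thin-part estimate via \cite{AB+}*{Lemma~6.18} replacing your ``fix a point in the $R$-thin part'' step, and $k_\nu(1)\asymp_\varepsilon\tilde\beta(\nu)$ replacing your $k(e)\gg_\nu 1$). The construction $\hat\phi(\lambda)\overline{\hat\phi(-\bar\lambda)}$ is indeed real and non-negative on the hermitian dual $\af^*_{\rm hm}$ once $\hat\phi$ is $W$-invariant, so your positivity claim is fine; your $|\hat\phi|^2$ notation is slightly loose (it is not holomorphic as written), but the intent is standard.

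The genuine gap is exactly where you wave your hands: you invoke ``a Sauvageot-type limit multiplicity estimate for non-tempered automorphic representations'' as an external black box, whereas the paper derives the required input internally, and this self-contained derivation is the whole content of the proof in higher rank. Concretely, what you actually need is a \emph{uniform polynomial upper bound} on $N(B'(\mu,t),\Gamma_n)=|\{j : \|\Im\,\nu_j^{(n)}-\mu\|_2\le t\}|$, of quality $\ll \vol(\Gamma_n\backslash G)\,\tilde\beta(\mu)\,t^r$ uniformly in $\mu\in i\af^*$, $t\ge1$, and $n$. The paper obtains this (their \eqref{Weyl-upper}) by running the \emph{same} trace-formula-plus-thin-part argument with the scaled test function $k_{\mu,c_1^{-1}t}$ and the upper bound $k_{\mu,t}(1)\ll\tilde\beta(\mu)t^r$; no separate non-tempered limit multiplicity theorem is needed or cited. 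With this in hand, the far contribution is controlled by covering $\{\lambda\in i\af^*:\|\lambda-\nu\|_2>\varrho\}$ by unit balls at a lattice $\Lambda\subset i\af^*$, applying the rapid decay $h_\nu(\lambda)\ll_A(1+\|\Im\lambda-\nu\|_2)^{-A}$ ball by ball, and summing using $\tilde\beta(\mu)\ll(1+\|\mu-\nu\|_2)^{\dim\nf}\tilde\beta(\nu)$ (their Lemma~\ref{lem:away:from:center}). Finally, passage from ``$\Im\lambda$ near $\nu$'' to ``$\lambda$ near $\nu$'' uses $\|\Re\lambda\|_2\le\|\rho\|_2$ from \eqref{eq:unitary:rep} and the positivity of $h_\nu$ on $\af^*_{\rm hm}$, which lets one enlarge the sum without sign problems; this is why one must take $\varrho^2>\|\rho\|_2^2$. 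You correctly flag that the non-tempered spectrum is the obstacle, but you leave it as an obstacle; the missing idea is that the uniform upper bound that kills it comes from the same machinery, so the argument closes on itself rather than requiring a stronger external input.
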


The proof of Proposition \ref{thm:spec-var} in fact yields more information than what we have recorded here. See \eqref{more-precise} for the precise estimate. In particular, for sufficiently regular $\nu\in i\af^*$ we show the stronger uniform lower bound $\vol(\Gamma_n\backslash G)\tilde\beta(\nu)\ll N(B(\nu,\varrho),\Gamma_n)$.

We note that Proposition \ref{thm:spec-var} is stated for general symmetric spaces $S$. By contrast, we have restricted the setting of the next two results, Theorems \ref{thm:spec} and \ref{thm:GeomSide} below, to the symmetric space $S=\SL_d(\R)/\SO(d)$.

\subsection{The averaging subset}\label{sec:Et}

Following \cite{BLL} and \cite{LS}, the main idea behind \eqref{lem:reduction} is the strategic use of a self-adjoint normalized averaging operator which, on one hand, acts non-decreasingly on the spectral average (Theorem \ref{thm:spec}) and on the other hand has small Hilbert--Schmidt norm (Theorem \ref{thm:GeomSide}). This idea can be traced back to the work of Brooks, Le Masson, and Lindenstrauss \cite{BLL}, which presents an alternative proof of \cite[Theorem 1.3]{AL}, using discrete normalized averaging operators.

Below we give a definition of an exhaustive sequence of sets $E_t\subset \SL_d(\R)$ which we shall use in our averaging operators. The difficulty in higher rank is finding a set $E_t$ which satisfies simultaneously the desired spectral and geometric properties (in rank 1, the obvious choice of a Riemannian metric ball is shown to work in \cite{LS} and \cite{ABL}). 

We begin by introducing some notation. Let $\gf$ be the trace zero matrices in $M_d(\R)$. Let 
\[
\af=\big\{X={\rm diag}(X_1,\ldots ,X_d)\in\gf\big\}
\]
be the standard Cartan subalgebra of diagonal matrices. Let $W=N_K(\af)/Z_K(\af)$ be the Weyl group of $(\af,\gf)$; the action of $W\simeq S_d$ on $\af$ is by permutation of the coordinates.  Let $\af^+$ be the standard positive Weyl chamber in $\af$ given by $X_1>X_2>\cdots >X_d$. For a vector $X\in \af$ we will write
\[
\|X\|_\infty=\max\{|X_1|,\ldots, |X_d|\}.
\]
The $W$-invariant norm $\|\cdot \|_\infty$ on $\af$ induces a sub-additive bi-$K$-invariant norm on $G=\SL_d(\R)$ via the Cartan decomposition $G=K\exp (\overline{\af^+})K$. Namely, if $g\in G$ with $g\in K e^X K$, then we write $|g| = \|X\|_\infty$. We have $|g|\geq 0$, $|g^{-1}|=|g|$, and $|g_1g_2|\leq |g_1|+|g_2|$.

With this notation, we put
\begin{equation}\label{def-Et}
E_t=\{g\in G: |g| \leq t\}.
\end{equation}
This is the averaging set we shall use for our wave propagator. One may view $E_t$ as a radially invariant subset of the symmetric space $G/K$. See \S\ref{Et-remarks} for more commentary on the nature of $E_t$. 

\subsection{The two main results} We now define 
\[
k_t=\frac{1}{\sqrt{m_G(E_t)}}\mathbf{1}_{E_t}.
\]
Let $\rho_{\Gamma\backslash G}$ denote the right-regular representation of $G$ on $L^2(\Gamma\backslash G)$, and consider the wave-propagation operator on $L^2(\Gamma\backslash G)$ given by
\begin{equation}\label{defn-propogator}
U_t=\rho_{\Gamma\backslash G}(k_t).
\end{equation}
From $E_t^{-1}=E_t$ it follows that $k_t$ is a self-adjoint operator. For a measurable right-$K$-invariant function $a$ on $\Gamma\backslash G$ satisfying \eqref{function-hypoth}, and for $\tau>0$, we consider the time average
\begin{equation}\label{defn-Abar}
\mathbf{A}(\tau)=\frac{1}{\tau}\int_0^\tau U_taU_t dt.
\end{equation}
 
In Section \ref{sec:spectral} we shall prove the following spectral estimate.
\begin{theorem}[Spectral estimate]\label{thm:spec}
Let $S=\SL_d(\R)/{\rm SO}(d)$. Let $\Gamma$ a cocompact lattice in $\SL_d(\R)$, and put $Y=\Gamma\bs S$. Let $\Omega\subseteq i\af^*$ be compact. There exist constants $c,\tau_0>0$, depending on $\Omega$, such that, for $\tau\ge\tau_0$, we have
\[
\sum_{j:\, \nu_j\in\Omega} |\langle a\psi_j, \psi_j\rangle_{L^2(Y)}|^2\leq c \sum_{j:\,  \nu_j\in\Omega} \bigg|\langle \mathbf{A}(\tau)\psi_j , \psi_j\rangle_{L^2(Y)}\bigg|^2.
\]
\end{theorem}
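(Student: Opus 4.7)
\emph{Setup.} The plan is to diagonalize the propagator $U_t$ on Maass forms, thereby reducing the spectral inequality to a uniform positive lower bound on a time-averaged spherical transform. Since $k_t$ is bi-$K$-invariant and each Maass form $\psi_j$ is a right-$K$-invariant joint eigenfunction of the convolution algebra of bi-$K$-invariant functions,
\[
U_t\psi_j = h_t(\nu_j)\,\psi_j, \qquad h_t(\nu) := \int_G k_t(g)\,\varphi_\nu(g)\,dg,
\]
where $\varphi_\nu$ is the elementary spherical function. Because $E_t^{-1}=E_t$ and $k_t$ is real, $U_t$ is self-adjoint, so
\[
\langle U_t a U_t\psi_j,\psi_j\rangle = |h_t(\nu_j)|^2\,\langle a\psi_j,\psi_j\rangle,
\]
and averaging over $t\in[0,\tau]$ gives
\[
\langle \mathbf{A}(\tau)\psi_j,\psi_j\rangle = I_\tau(\nu_j)\,\langle a\psi_j,\psi_j\rangle, \qquad I_\tau(\nu):=\frac{1}{\tau}\int_0^\tau |h_t(\nu)|^2\,dt.
\]
The theorem now follows, with $c=c_0^{-2}$, from the uniform lower bound $\inf_{\nu\in\Omega} I_\tau(\nu) \ge c_0 > 0$ valid for all $\tau \ge \tau_0$, where $c_0,\tau_0$ depend only on $\Omega$.

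\emph{The lower bound.} To establish this bound I would use the Cartan polar decomposition together with Harish-Chandra's asymptotic expansion
\[
\varphi_\nu(e^X) \sim \sum_{w\in W} c(w\nu)\,e^{(w\nu-\rho)(X)}, \qquad X\to\infty \text{ in }\af^+,
\]
along with the growth $D(X) \asymp e^{2\rho(X)}$ of the Cartan Jacobian. Substituting these into $h_t(\nu)\propto m_G(E_t)^{-1/2}\int_{D_t}\varphi_\nu(e^X)\,D(X)\,dX$ yields
\[
h_t(\nu) \approx m_G(E_t)^{-1/2}\sum_{w\in W}c(w\nu)\,J_w(t,\nu), \qquad J_w(t,\nu) = \int_{D_t} e^{\rho(X)+(w\nu)(X)}\,dX,
\]
with $D_t=\{X\in\overline{\af^+}:\|X\|_\infty\le t\}$. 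Squaring and expanding, $|h_t(\nu)|^2$ decomposes into a diagonal part $m_G(E_t)^{-1}\sum_w|c(w\nu)|^2\,|J_w(t,\nu)|^2$ and off-diagonal cross terms indexed by $w\ne w'$. For $\nu\in i\af^*$ regular, the phases $(w-w')\nu$ are nonzero, so the cross terms oscillate in $t$ and contribute $O(\tau^{-1})$ to $I_\tau(\nu)$ after time-averaging, whereas the normalization by $\sqrt{m_G(E_t)}$ precisely cancels the exponential growth of each $J_w$ dictated by Laplace-type asymptotics, leaving the diagonal part bounded below by a positive continuous function of $\nu\in\Omega$.

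\emph{Main obstacle.} The hard part will be making the cross-term cancellation and the diagonal lower bound both quantitative and uniform over $\nu\in\Omega$. In rank one a single one-variable trigonometric computation suffices, as in \cite{LS}; in higher rank, by contrast, $D_t$ is a polytope with corners, $\rho$ attains its maximum on $D_t$ only on a low-dimensional face that typically lies on a wall of $\af^+$ (where the $c$-function develops singularities), and the oscillation frequencies $(w-w')\nu$ can become small as $\nu$ approaches singular points of $\Omega$. Controlling the error terms in the Harish-Chandra expansion near these walls, carrying out the multivariate Fourier/Laplace analysis of the $J_w$ uniformly in $\nu$, and matching the normalization $m_G(E_t)$ to the diagonal contributions, is precisely the technical content of Section~\ref{sec:spectral}.
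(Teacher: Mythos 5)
Your reduction to the uniform lower bound $\frac{1}{\tau}\int_0^\tau|h_t(\nu)|^2\,dt\ge c_0>0$ on $\Omega$ is exactly right and is identical to the paper's first step: the whole content becomes Proposition~\ref{prop:average:lower:bound}. But your sketch of the lower bound has a genuine gap in the choice of asymptotic expansion. You propose to replace $\varphi_\nu(e^X)$ by the classical Harish-Chandra expansion $\sum_{w\in W}c(w\nu)e^{(w\nu-\rho)(X)}$ and then analyze $J_w(t,\nu)=\int_{\cP^+_t}e^{(\rho+w\nu)(X)}\,dX$ by Laplace asymptotics. Since $\Re(w\nu)=0$ on $i\af^*$, the Laplace mass concentrates at $tX^0$, where $X^0$ maximizes $\langle\rho,\cdot\rangle$ over $\cP^+$. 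But $X^0$ is a \emph{singular} element (by Lemma~\ref{lem:X0:explicit} it has repeated coordinates for $d\ge 3$, hence a nontrivial stabilizer $W^M$), and the classical full-$W$ expansion has error terms controlled by $e^{-\alpha(X)}$ for $\alpha\in\Delta$; several of these are $O(1)$ in the very region that dominates the integral. You flag this ("$\rho$ attains its maximum ... on a wall") but do not resolve it, so the replacement $h_t\approx m_G(E_t)^{-1/2}\sum_w c(w\nu)J_w$ is not valid as stated and the program cannot be carried out.

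The missing ingredient, and the crux of Section~\ref{sec:spectral}, is to use the Trombi--Varadarajan expansion (Proposition~\ref{prop:GV2}) relative to the Levi $M=Z_G(X^0)$, not the full-rank expansion. The main term $\Phi_\lambda$ is a sum over $W^M\backslash W$ with coefficients $\mathbf{c}(w\lambda)/\mathbf{c}^M(w\lambda)$ and building blocks $f^M_{w\lambda}$ (spherical functions on $M$), and the error decays like $e^{-2\beta_{\Delta_0}(H)}$ where $\beta_{\Delta_0}$ only sees the roots outside $\Delta^M$; since $\alpha(X^0)=2>0$ for $\alpha\in\Delta\smallsetminus\Delta^M$, this error is uniformly small on the shrunken polytope $t\cE^+$ used in Proposition~\ref{replacement}. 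This yields $h_t(\lambda)\approx I(t,\lambda)=\sum_{w\in W^M\backslash W}a_w(\lambda)e^{t\langle w\lambda,X^0\rangle}$, a true finite exponential polynomial in $t$. The paper then deviates from your diagonal/off-diagonal split: it establishes non-vanishing of the coefficients $a_w(\lambda)$ by a holomorphy argument over the tube $\cT_\eta$ (Lemma~\ref{lem:wInt:nonzero}, Proposition~\ref{prop:nonzero}), applies the linear-independence-of-characters lemma of Harish-Chandra to get $I(t_\lambda,\lambda)\neq 0$, exploits periodicity of $I(\cdot,\lambda)$ along $\R X^0$ for \emph{rational} $\lambda$ (Lemma~\ref{lem:periodic}, Lemma~\ref{cor:large:average:rational}) to turn a single non-vanishing $t_\lambda$ into a positive density of $t$'s, and finally extends to all of $\Omega$ by density of rational $\lambda$ and continuity of $\lambda\mapsto\frac1T\int_0^T|h_t(\lambda)|^2\,dt$. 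This periodicity-plus-density route deliberately sidesteps the uniformity-in-$\nu$ issues of the frequencies $(w-w')\nu$ that your averaging argument would have to confront directly.
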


In Section \ref{sec:geom}, we shall use the Nevo mean ergodic theorem \cite{Nevo} (see also \cite[Theorem 4.1]{GV}) to prove the following geometric estimate. Recall that the Hilbert--Schmidt norm of a bounded operator $\mathbf{A}$ on a separable Hilbert space $H$ is given by 
\[
\|\mathbf{A}\|_{\rm HS}^2=\sum_i |\langle \mathbf{A} e_i,e_i\rangle|^2,
\]
where $\{e_i\}$ is any orthonormal basis of $H$. 

\begin{theorem}[Geometric estimate]\label{thm:GeomSide}
Let $S=\SL_d(\R)/{\rm SO}(d)$. Let $\Gamma$ be a cocompact torsion free lattice in $\SL_d(\R)$, and put $Y=\Gamma\bs S$. Let $a$ be a measurable function on $L^2(\Gamma\backslash S)$. There are constants $b, c_1,c_2>0$, depending only on $d$, such that, for $\tau>0$,
\[
\big\|\mathbf{A}(\tau)\big\|_{\rm{HS}}^2\ll_b \frac{\|a\|_2^2}{\tau}+ \frac{e^{c_1\tau}}{{\rm InjRad}(Y)^{\dim S}}\vol((\Gamma\backslash G)_{\leq c_2(2\tau+b)})\|a\|_\infty^2.
\]
\end{theorem}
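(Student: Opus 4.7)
The plan is to express $\|\mathbf{A}(\tau)\|_{\mathrm{HS}}^2$ as a double time integral and carry out a pretrace computation that isolates an ``identity'' and an ``off-identity'' contribution. Since $\mathbf{A}(\tau)$ is self-adjoint,
\[
\|\mathbf{A}(\tau)\|_{\mathrm{HS}}^2 = \frac{1}{\tau^2}\int_0^\tau\!\!\int_0^\tau \tr\bigl(U_s a U_s U_t a U_t\bigr)\,ds\,dt.
\]
Using cyclicity of the trace, the commutativity of bi-$K$-invariant convolvers, and $U_tU_s=\rho(k_t\ast k_s)$, and writing $h_{t,s}:=k_t\ast k_s$, one has $\tr(U_saU_sU_taU_t)=\int_{(\Gamma\backslash G)^2}a(x)a(y)K_{h_{t,s}}(x,y)^2\,dx\,dy$, where $K_{h_{t,s}}(x,y)=\sum_{\gamma\in\Gamma}h_{t,s}(x^{-1}\gamma y)$. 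Unfolding one copy of $K_{h_{t,s}}$ in $y$ via $\sum_\gamma\int_{\Gamma\backslash G}=\int_G$ and setting $\xi=x^{-1}y$ turns the trace into
\[
\int_{\Gamma\backslash G}a(x)\int_G a(x\xi)\,h_{t,s}(\xi)\sum_{\gamma\in\Gamma}h_{t,s}(x^{-1}\gamma x\xi)\,d\xi\,dx,
\]
which I would split according to $\gamma=1$ versus $\gamma\ne1$ into terms $T_1(t,s)$ and $T_2(t,s)$.

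The identity piece $T_1(t,s)=\langle a,\rho(h_{t,s}^{2})a\rangle$ is a matrix coefficient of the right regular representation on $L^2_0(\Gamma\backslash G)$. Since $h_{t,s}$ is bi-$K$-invariant, the convolver $\rho(h_{t,s}^2)$ acts diagonally in the Maass-form basis, with eigenvalues equal to the spherical transform of $h_{t,s}^2$; the quantitative Nevo mean ergodic theorem available for higher-rank semisimple groups with property T (which holds for $\SL_d(\R)$, $d\ge 3$) controls these eigenvalues on $L^2_0$. The decisive $1/\tau$ gain, however, comes from the time averaging rather than from Nevo alone: a direct computation using $h_{t,s}(e)^2=m(E_{\min(t,s)})/m(E_{\max(t,s)})$ and the exponential volume growth of $E_t$ yields $\tau^{-2}\int_0^\tau\int_0^\tau h_{t,s}(e)^2\,ds\,dt\asymp 1/\tau$, and combining this Cesaro-type gain with matrix-coefficient decay for $c_a(\xi):=\int_{\Gamma\backslash G} a(x)a(x\xi)\,dx$ produces $\tau^{-2}\int_0^\tau\int_0^\tau T_1(t,s)\,ds\,dt\ll\|a\|_2^2/\tau$.

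For $T_2(t,s)$ the key observation is support-based: $h_{t,s}(\xi)h_{t,s}(x^{-1}\gamma x\xi)\ne 0$ requires $|\xi|\le t+s$ and $|x^{-1}\gamma x\xi|\le t+s$, so by sub-additivity $|x^{-1}\gamma x|\le 2(t+s)\le 4\tau$. Since $|g|$ is comparable to the Riemannian distance $d(K,gK)$ on $S$ up to a $d$-dependent constant absorbed into $b$, for $\gamma\ne 1$ this forces $xK$ into the thin part $(\Gamma\backslash G)_{\le c_2(2\tau+b)}$. A standard disjoint-balls packing argument (using uniform discreteness of $\Gamma$) bounds the number of such $\gamma$ by $\ll e^{c_1'\tau}/{\rm InjRad}(Y)^{\dim S}$; a Cauchy--Schwarz estimate gives $\int h_{t,s}(\xi)h_{t,s}(x^{-1}\gamma x\xi)\,d\xi\le \|h_{t,s}\|_2^2\ll e^{c_1''\tau}$. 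Multiplying by $\|a\|_\infty^2$, integrating over the thin part, and absorbing the outer $\tau^{-2}\int_0^\tau\int_0^\tau$ yields the second claimed term.

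The main obstacle is the $T_1$ bound: the trivial estimate $|T_1(t,s)|\le\|a\|_2^2\|h_{t,s}\|_2^2$ is exponential in $\tau$ and thus useless, so one must simultaneously exploit the mean-zero condition on $a$ (via matrix-coefficient decay through the property-T spectral gap) and the gain from averaging in $(t,s)$ to extract the true $1/\tau$. A secondary difficulty, absent in the rank-one treatments \cite{LS, ABL}, is that $E_t$ is polytopal rather than a metric ball, so the intersection-volume identity $h_{t,s}(\xi)=m(E_t\cap\xi E_s)/\sqrt{m(E_t)m(E_s)}$ must be handled via the delicate geometric analysis carried out in Section \ref{subsec:intersection} instead of by CAT$(-1)$ comparison tools.
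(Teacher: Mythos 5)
Your $T_2$/off-identity argument is sound and corresponds to the paper's treatment of the thin part of a fundamental domain in Lemma \ref{lem:thick-thin} (together with Proposition \ref{cor:A-support}), and your pretrace decomposition $\|\mathbf{A}(\tau)\|_{\rm HS}^2 = \tau^{-2}\iint \operatorname{tr}(U_saU_sU_taU_t)\,ds\,dt$ with the $\gamma=1$ vs.\ $\gamma\neq 1$ split is a legitimate alternative to the paper's fundamental-domain thick/thin split. But your treatment of $T_1$ is where the real gap is. The observation that $\tau^{-2}\iint h_{t,s}(e)^2\,ds\,dt\asymp 1/\tau$ is only the value of $h_{t,s}^2$ at the identity; the term you must control is $\int_G c_a(\xi)h_{t,s}(\xi)^2\,d\xi$, and $\|h_{t,s}\|_2^2$ is exponentially large in $\min(t,s)$, so the Cesaro gain you quote does not on its own tame the $\xi$-integral. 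You assert that ``combining this Cesaro-type gain with matrix-coefficient decay for $c_a$ produces $\ll\|a\|_2^2/\tau$,'' but this is exactly the calculation that needs to be carried out, and nothing in the sketch shows how the polynomial decay of $c_a$ interacts with $h_{t,s}(\xi)^2=m(E_t\cap\xi E_s)^2/(m(E_t)m(E_s))$ to produce a convergent, $O(\tau)$-sized integral.

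In the paper this is accomplished not by matrix coefficient decay for $c_a$ but by the Minkowski integral inequality (Lemma \ref{lem:L2:AX}), which rewrites the thick-part quantity in terms of $\|\rho^0(gE_t\cap E_t)a\|_2$, to which Nevo's mean ergodic theorem (Proposition \ref{prop:nevo}) applies directly and gives the power $m_G(gE_t\cap E_t)^{1-\theta}$; the remaining integral is then bounded by $\tau$ in Proposition \ref{prop-2int}, and the decisive input there is the intersection volume estimate $m_G(e^Y E_t\cap E_t)\ll e^{-\langle\rho,Y\rangle}m_G(E_t)$ of Proposition \ref{prop:upper:bound:intersection}. This last estimate is not a ``secondary difficulty'' replacing CAT$(-1)$ tools — it is what makes the $\xi$-integral (equivalently, the $g$-integral in Proposition \ref{prop-2int}) converge at all, and it interacts multiplicatively with the Nevo exponent $\theta$. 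If you want to run your matrix-coefficient version of the argument you would still need an estimate of the shape of Proposition \ref{prop:upper:bound:intersection} to control $\int_G |c_a(\xi)| h_{t,s}(\xi)^2\,d\xi$, and you would need to make the interplay between the exponent in the $c_a$ decay and the exponent in the intersection bound explicit; as written, the key inequality is asserted rather than proved.
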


\subsection{Reduction to two main results}
We now deduce the main estimate \eqref{lem:reduction} from the above results. 

We first note  that if $\nu\in i\af^*$ is sufficiently regular in the sense that $|\langle\nu,\alpha^\vee\rangle|\ge C_\varrho$ for all simple roots $\alpha$ with $C_\varrho>0$ a sufficiently large constant depending on $\varrho$, then $B_0(\nu,\varrho)=B(\nu,\varrho)$ because of \eqref{eq:unitary:rep}. 
By Proposition \ref{thm:spec-var} it suffices to prove an upper bound for
\[
\frac{1}{\vol (\Gamma_n\bs G)}\sum_{j:\, \nu_j^{(n)}\in B(\nu,\varrho)} |\langle a_n\psi_j^{(n)}, \psi_j^{(n)}\rangle_{L^2(Y_n)}|^2.
\]
From Theorem \ref{thm:spec} it follows that, for $\tau\geq \tau_0$, 
\[
\frac{1}{\vol (\Gamma_n\bs G)}\sum_{j:\, \nu_j^{(n)}\in B(\nu,\varrho)} |\langle a_n\psi_j^{(n)}, \psi_j^{(n)}\rangle_{L^2(Y_n)}|^2 \ll \frac{1}{\vol (\Gamma_n\bs G)}\big\| \mathbf{A}^{(n)}(\tau)\big\|_{\textrm{HS}}^2.
\]
We now apply Theorem \ref{thm:GeomSide} to the right-hand side. To bound the first term, we use the estimate $\|a_n\|_2^2\leq \|a_n\|_\infty^2 \vol(\Gamma_n\backslash G)$ along with the uniform boundedness of $a_n$. For the second term we insert the bound ${\rm InjRad}(Y_n)\gg 1$ coming from the uniformly discrete hypothesis in the statement of Theorem \ref{thm:QE}. We obtain
\begin{equation}\label{bound-2-terms}
\frac{1}{\vol (\Gamma_n\bs G)}\big\|\mathbf{A}^{(n)}(\tau_n)\big\|_{\rm{HS}}^2\ll \frac{1}{\tau_n}+ e^{c_1\tau_n}\frac{\vol\left((\Gamma_n\backslash G)_{\leq c_2(2\tau_n+b)}\right)}{\vol (\Gamma_n\bs G)}.
\end{equation}
We know from \cite{AB+}*{\S 4} that for any sequence $\Gamma_n$ for which $\vol(\Gamma_n\backslash G)\rightarrow\infty$ the space $\Gamma_n\backslash G/K$ converges in the sense of Benjamini--Schramm to $G/K=\SL_d(\R)/\SO(d)$ (recall our assumption that $d\geq 3$). We deduce that there is a sequence $R_n\rightarrow\infty$ such that
\begin{equation}\label{little-oh}
\frac{\vol\left((\Gamma_n\backslash G)_{\leq R_n}\right)}{\vol(\Gamma_n\backslash G)}\rightarrow 0
\end{equation}
as $n\rightarrow\infty$. Given this sequence $R_n$ we let $r_n>0$ be a sequence such that
\begin{enumerate}
\item[(i)] $r_n\rightarrow \infty$ as $n\rightarrow\infty$,
\medskip
\item[(ii)] $e^{c_1 r_n}\frac{\vol\left((\Gamma_n\backslash G)_{\leq c_2(2r_n+b)}\right)}{\vol(\Gamma_n\backslash G)}\rightarrow 0$ as $n\rightarrow\infty$.
\medskip
\end{enumerate}
Taking $\tau_n=r_n$, both terms in \eqref{bound-2-terms} go to $0$ with $n$, establishing \eqref{lem:reduction}.

\begin{remark}
One could deduce an effective rate of convergence in Theorem \ref{thm:QE} for congruence subgroups $\Gamma_n$ of $\SL_d(\Z)$ by inserting, in place of the $o(1)$ result in \eqref{little-oh}, the explicit bounds on the $R$-thin part for such sequences proved in \cite[Theorem 5.2]{AB+}.
\end{remark}

\subsection{Remarks on the averaging subset}\label{Et-remarks}

We now make several remarks about $E_t$:

\medskip

\noindent (i) In rank one, the set $E_t$ recovers the metric ball centered at $i\in\mathbb{H}$ of radius $2t$ for the hyperbolic metric $(dx^2+dy^2)/y^2$ on the upper half-plane $\mathbb{H}=\SL_2(\R)/\SO(2)$. 

\medskip

\noindent (ii) In higher rank, the set $E_t$ differs substantially from a metric ball centered at the identity for the metric induced by the trace form ${\rm Tr}(X^\top Y)$ (a constant multiple of the Killing form) on $\gf$. 

To see this, let us first compare a geodesic ball and the set $E_t$ by means of the Cartan decomposition of $G=\SL_d(\R)$. Let $\mathscr{B}=\{X\in\af: \|X\|_2\leq 1\}$, where $\|X\|_2^2={\rm tr}(X^2)=X_1^2+\cdots +X_d^2$. The geodesic ball $B_t$ of radius $t>0$ centered at the origin is given by
\begin{equation}\label{Bt-polar}
B_t=K\exp (\mathscr{B}_t^+) K,
\end{equation}
where $\mathscr{B}_t^+=t\mathscr{B}\cap\overline{\af^+}$; see \cite[Proposition 4.2]{BM}. On the other hand, let
\begin{equation}\label{def:cP}
\cP=\{X\in\af\mid  \|X\|_\infty\le 1\}.
\end{equation}
Note that $\cP$ is a bounded convex polytope in $\af$, being the intersection of the half planes $\pm X_i\leq 1$. It is clear that
\begin{equation}\label{Et-polar}
E_t=K\exp(\cP_t^+)K,
\end{equation} 
where $\cP^+_t=t\cP\cap \overline{\af^+}$. One sees from \eqref{Bt-polar} and \eqref{Et-polar} that an element in $B_t$ or $E_t$ has Cartan radial component contained, respectively, to an expanding ball or a dilated $W$-invariant polytope in $\af$.

\medskip

\noindent (iii) Related to the above description is the difference in the volume asymptotics between $B_t$ and $E_t$. Both are expressed, naturally, with respect to the half-sum of the positive roots $\rho\in\af^*$, as that quantity governs the Jacobian factor in the Cartan decomposition of the Haar measure on $G$; see \S\ref{subsection:ideas}. 

For the geodesic ball, a result of Knieper \cite{Knieper}, valid more generally for irreducible Riemannian symmetric spaces $S=G/K$ of non-compact type, states that
\begin{equation}\label{Knieper-bound}
m_G(B_t)\asymp  t^{(r-1)/2}e^{2t\|\rho\|_2},
\end{equation}
where $r$ is the rank of $S$. On the other hand, we show in Corollary \ref{rem:volume} that (for $G=\SL_d(\R)$)
\[
m_G(E_t)\sim c_2 e^{2t\langle \rho, X^0\rangle}\qquad (t\rightarrow\infty),
 \]
where $X^0\in \cP^+$ is the unique point satisfying
 \begin{equation}\label{defnX0}
\langle \rho, X^0\rangle=\max_{X\in\cP^+}\langle\rho,X\rangle.
\end{equation}
One can compute that
\begin{equation}\label{rho-norm}
\|\rho\|_2=\big\langle\rho,\frac{\rho}{\|\rho\|_2}\big\rangle=\sqrt{d(d^2-1)/12},
\end{equation}
whereas, using Lemma \ref{lem:X0:explicit}, we have
\[
\big\langle\rho,\frac{X^0}{\|X^0\|_2}\big\rangle=\begin{cases} d^{3/2}/4&,\; d\; {\rm even}\\
(d+1)(d-1)^{1/2}/4&,\; d\; {\rm odd}.
\end{cases}
\]
We have normalized by $\|X^0\|_2$ so that $B_t$ is the smallest geodesic ball containing $E_{t/\|X^0\|_2}$. We see that the exponential volume growth of $B_t$ in $\SL_d(\R)/\SO(d)$ is of order $2t \sqrt{d^3/12}$, whereas that of $E_{t/\|X^0\|_2}$ is of order $2t\sqrt{d^3/16}$, which is a factor of $\sqrt{3}/2$ smaller.
\medskip

\noindent (iv) Another norm that one often considers in the context of $G=\SL_d(\R)$ is the restriction to $\SL_d(\R)\subset M_d(\R)$ of the \textit{Frobenius norm}, defined on $M_d(\R)$ as $\|g\|^2={\rm tr}(g^\top g)$, where $g^\top$ is the transpose of $g$. Note that when $\|g^{-1}\|\neq \|g\|$ in general. Since invariance under inversion is important for the self-adjointness of the propagator $U_t$ from \eqref{defn-propogator}, we define the \textit{Frobenius ball} as 
\begin{equation}\label{def:Frob-ball}
\bm{E}_t=\left\{g\in G: \max\{\|g\|,\|g^{-1}\|\}\leq e^t\right\}.
\end{equation}
From the point of view of their large scale geometry, the Frobenius balls are similar to the sets $E_t$ from \eqref{def-Et}. Indeed in the  proof of Proposition \ref{prop:upper:bound:intersection} we show that $m_G(E_t)\asymp m_G(\bm{E}_t)$. Moreover, the very statement Proposition \ref{prop:upper:bound:intersection}, which is a key component Theorem \ref{thm:GeomSide}, holds equally well for $\bm{E}_t$. On the other hand, because they are not defined by their radial Cartan component, the Frobenius balls are not amenable to our proof of Theorem \ref{thm:spec}.

\section{Weyl type law}\label{sec:Weyl}
Our aim in this section is to prove Proposition \ref{thm:spec-var}. Throughout this section only, we let $G$ denote a connected non-compact simple Lie group with finite center, $K$ a maximal compact subgroup. 

\subsection{Notation}\label{sec:gen-notation}
The notation we introduce here will be consistent with that already introduced for the particular case of $G=\SL_d(\R)$ and $K=\SO(d)$.

Let $\theta$ be the Cartan involution on $G$ for which $K=G^\theta$. Let $\Theta$ be its differential and let $\gf=\pf\oplus\kf$ be the Cartan decomposition of the Lie algebra $\gf$ of $G$ into the $\pm 1$ eigenspaces of $\Theta$. We may identify $\kf$ with the Lie algebra of $K$. 

Let $\kappa(X,Y)={\rm tr}(\ad X\ad Y)$ denote the Killing form on $\gf$. Then $-\kappa(X,\Theta Y)$ defines an $\Ad$-invariant inner product on $\gf$, which induces a left-invariant Haar measure on $G$, denoted $dg$ or $m_G$.

The Killing form defines an $\Ad_K$-invariant inner product $\langle \cdot,\cdot\rangle$ on $\pf$, and in particular on $\af$, the maximal abelian subalgebra of $\pf$. Let $W=N_K(\af)/Z_K(\af)$ be the Weyl group. Let $\| \cdot\|_2$ denote the $W$-invariant norm\footnote{When $G=\SL_d(\R)$, we introduced in \S\ref{Et-remarks} (ii) the norm induced by the trace form on $\gf$, denoted there by the same symbol $\|\cdot \|_2$. The Killing form for $\SL_d(\R)$, in fact, differs from the trace form by a constant factor of $2d$. When we return to the specific situation of $\SL_d(\R)$ in later sections, we shall always take the norm $\|\cdot \|_2$ on $\af$ to mean $\|X\|_2^2={\rm tr}(X^2)$.} on $\af$ induced by $\langle \cdot,\cdot\rangle$. 

Let $\Phi\subset \af^*$ denote the system of roots for the adjoint action of $\af$ on $\gf$. For $\alpha\in\Phi$ let $\gf_\alpha$ denote the corresponding root space. Let $\mf\subset\gf$ be the centralizer of $\af$ in $\gf$. Then we have the root space decomposition
\[
\gf=\mf\oplus\af\oplus\bigoplus_{\alpha\in\Phi}\gf_\alpha.
\]
Choose a positive system of roots $\Phi^+$ in $\Phi$. Let $\Delta\subseteq \Phi^+$ be the set of simple roots. 

Let $\af^*={\rm Hom}(\af,\R)$ be the dual vector space of $\af$. We may identify $\af$ with $\af^*$ via the Killing form. We again denote by $\|\cdot \|_2$ the induced norm on $\af^*$ and extend it to a complex bilinear form on the complexification $\af^*_\C=\af^*\otimes_\R\C$. 
We call $\nu\in i\af^*$ \textit{regular} if $\langle \alpha,\nu\rangle\neq 0$ for all $\alpha\in\Delta$, and for $c>0$ we call $\nu\in i\af^*$ \emph{$c$-regular} if $|\langle \alpha,\nu\rangle|\ge c$ for all $\alpha\in\Phi^+$. An element $\nu\in i\af^*$ is \emph{sufficiently regular} if there exists a sufficiently regular $c>0$ (which may vary at each occurrence) such that $\nu$ is $c$-regular. Unless otherwise noted, implied constants for statements valid for sufficiently regular $\nu$ can depend on the value of $c$.

Let $\nf=\sum_{\alpha\in\Phi^+} \gf_\alpha$ and write $N=\exp\nf$. Then we have the Iwasawa decomposition $G=NAK$, where $A=\exp\af$, which gives rise to the Iwasawa projection
\[
H_0: G\rightarrow \af, \qquad g=nak\mapsto \log a,
\]
along the $A$ component. 

We may decompose the Riemannian Haar measure $dg$ on $G$ according to the Iwasawa decomposition. We let $da$ denote the Haar measure on $A$ obtained by pushing forward the Lebesgue measure on $\af$ my means of the exponential. We let $dk$ denote the probability Haar measure $dk$ on $K$. We normalize the left-invariant Haar measure $du$ on $N$ as in \cite[\S 3.1]{DKV}. As usual we put
\[
\rho=\frac12{\rm tr}(\ad(\af)|_\nf)=\frac12\sum_{\alpha\in\Phi^+}(\dim\gf_\alpha) \alpha.
\]
Then one has \cite[Proposition 2.4.10]{GaVa}
\[
dg= c_Ie^{2\rho (H_0(g))}du da dk \qquad (c_I= 2^{-(1/2)\dim N}).
\]
Since $A$ normalizes $N$ and $\det \Ad(a)|_\nf=e^{2\rho (H_0(a))}$,
\begin{equation}\label{Iwasawa-measure}
dg=c_Ida du dk
\end{equation}
in the decomposition $G=ANK$.

Recall the Cartan decomposition $G=K\exp (\overline{\af^+})K$, where $\overline{\af^+}$ denotes the closure of the (open) positive Weyl chamber $\af^+=\{X\in\af: \alpha(X)>0\;\forall\,\alpha\in \Phi^+\}$. We have the following decomposition of the Riemannian Haar measure $dg$ into the Cartan decomposition
\begin{equation}\label{Cartan-measure-decomp}
dg=c_C J(X) dk_1 da dk_2 \qquad (c_C=2^{-\dim N}\vol (K)\vol(K/M)).
\end{equation}
Here, $\vol(K)$ is the Riemannian volume on $K$, for the measure induced by the inner product $-\kappa(X,Y)$ on $\mathfrak{k}$, and similarly with $\vol(K/M)$. Finally, $J(X)$ is the Jacobian factor, given by
\begin{equation}\label{first-J(X)}
 J(X)=\prod_{\alpha\in\Phi^+} \sinh(\alpha(X))^{\dim\gf_\alpha}\qquad (X\in\overline{\af^+});
\end{equation}
see \cite[Proposition 2.4.11]{GaVa}.

Let $P_0$ be the normalizer of $\mf\oplus\af\oplus\nf$ in $G$. Then $P_0$ has Langlands decomposition $P_0=MAN$, where $M$ is the centralizer of $A$ in $G$. For $\lambda\in\af^*_\C$ let $\pi_\lambda$ denote the unique unramified irreducible subquotient of ${\rm Ind}_{P_0}^G(1\otimes e^\lambda\otimes 1)$. Let
\[
\af^*_{\rm un}=\{\lambda\in\af^*_\C: \pi_\lambda \;\text{unitarizable}\}
\]
be the spherical unitary dual of $G$. Furthermore, let
\[
\af^*_{\rm hm}=\bigcup_{w\in W}\{\lambda\in\af^*_\C: w\lambda=-\bar\lambda\}
\]
denote the spherical hermitian dual of $G$ (see \cite[\S 3.3]{LM}). Let $i\af$ be the subspace of $\af_\C^*$ consisting of $\lambda$ taking on purely imaginary values. We may write $\lambda\in\af^*_\C$ uniquely as $\lambda={\rm Re}\,\lambda+{\rm Im}\,\lambda \in \af^*\oplus i\af^*$. We have
\begin{equation}\label{eq:unitary:rep}
i\af^*\subset \af^*_{\rm un}\subset \af^*_{\rm hm}\cap\{\lambda\in\af^*_\C: \|{\rm Re}\,\lambda\|_2\leq \|\rho\|_2\}.
\end{equation}
For $\mu\in i\af^*$ and $r>0$ we let 
\[
B_0(\mu,r)=\{\lambda\in i\af^*: \|\lambda-\mu\|_2\leq r\}
\]
denote the ball of radius $r$ about $\mu$ in the tempered spectrum $i\af^*$. When $r=1$ we write $B_0(\mu)$ for $B_0(\mu,1)$.
We also write 
\[
 B(\mu,r)=\{\lambda\in \af^*_\C: \|\lambda-\mu\|_2\leq r\}
\]
for the ball of radius $r$ around $\mu$ in $\af^*_\C$.

\subsection{Plancherel density and the $\mathbf{c}$-function}
We denote by $\beta$ the density function for the Plancherel measure on the spherical unitary dual, which can be identified with $\af^*_{\rm un}/W$. Then $\beta$ is a $W$-invariant function supported on $i\af^*$ that can be described as a product of $\Gamma$-functions \cite[Ch. IV, \S6]{Helgason}. Following \cite{LM} we put
\[
\tilde\beta(t,\lambda)=\prod_{\alpha\in\Phi^+}(t+|\langle \lambda,\alpha^\vee\rangle |)^{\dim\gf_\alpha}, \qquad \lambda\in i\af^*, \; t\geq 1,
\]
and $\tilde\beta(\lambda)=\tilde\beta(1,\lambda)$. From \cite[Ch. IV, Theorem 6.14]{Helgason} and standard estimates for the $\Gamma$-function it follows that $\beta(\lambda)\ll \tilde\beta(\lambda)$ for all $\lambda\in i\af^*$.
Since 
\[
 1+|\langle t\lambda +\mu,\alpha^\vee\rangle |
 \le 1+ t |\langle \lambda ,\alpha^\vee\rangle | + |\langle\mu,\alpha^\vee\rangle |
 \le t (1+ \|\lambda\|_2) +|\langle\mu,\alpha^\vee\rangle |
  \le (1+ \|\lambda\|_2)(t +|\langle\mu,\alpha^\vee\rangle |)
\]
for all $\lambda,\mu\in i\af^*$ and $t\geq 1$, we get
\begin{equation}\label{beta-tilde-bound}
\tilde\beta(t\lambda+\mu)\ll (1+\|\lambda\|_2)^{|\Phi^+|} \tilde\beta(t,\mu)
\end{equation}
for all such $\lambda,\mu$, and $t$.

The Harish-Chandra $\mathbf{c}$-function $\mathbf{c}:\af^*_\C\longrightarrow \C$ for $G$ asymptotically describes the behavior of the elementary spherical functions $\phi_\lambda(e^H)$ of $G$ as the group parameter $H$ grows. The quantity $\mathbf{c}(\lambda)$ depends only on the root system of $G$, and can be explicitly computed as described in \cite[Chapter IV, Theorem 6.14]{Helgason}. Up to normalization, the Plancherel density $\beta(\lambda)$ equals $|\mathbf{c}(\lambda)|^{-2}$ for $\lambda\in i\af^*$.

\subsection{Test functions}
For $\lambda\in\af^*_\C$ let
  \[
   \varphi_\lambda(g)=\int_K e^{\langle \lambda + \rho, H_0(kg)\rangle} \, dk
  \]
denote the spherical function on $G$ with spectral parameter $\lambda$. Here $H_0:G\longrightarrow \af$ is the Iwasawa projection. Let $C^\infty_c(G\sslash  K)$ be the space of complex-valued bi-$K$-invariant functions on $G$. The Harish-Chandra transform of a function $k\in C^\infty_c(G\sslash  K)$ is defined to be
\[
\hat{k}(\nu)=\int_G \varphi_\lambda(g) k(g)dg,
\]

For $t>0$ let $\mathscr{B}_t$ denote the ball of radius $t$ centered at $0$ in $\af$ with respect to the usual Euclidean norm $\|\cdot\|_2$. Let $G_{\leq t}=K\exp \mathscr{B}_t\, K$. Let $C^\infty_c(G\sslash  K)_{\leq t}$ denote the space of smooth compactly supported bi-$K$-invariant functions on $G$, supported on $G_{\leq t}$. Let $\mathscr{PW}(\af^*_\C)_t$ denote the class of Paley--Wiener functions of exponential type $t$. The Paley--Wiener theorem with supports (see \cite[Theorem 3.5]{Gangolli}) states that the Harish-Chandra transform  is a topological isomorphism $C^\infty_c(G\!\sslash \! K)_{\leq t}\xrightarrow{\sim} \mathscr{PW}(\af^*_\C)_t^W$ of Fr\'echet spaces. The inverse map sends $h\in \mathscr{PW}(\af^*_\C)_t^W$ to 
\[
\frac{1}{|W|}\int_{i\af^*}h(\mu)\varphi_\mu(g)\beta(\mu)d\mu.
\]
In particular, for $k\in C_c^\infty(G\sslash  K)$ we have
\begin{equation}\label{Plancherel-inversion}
k(e)=\frac{1}{|W|}\int_{i\af^*}\hat{k}(\mu)\beta(\mu)d\mu.
\end{equation}

We shall need a Paley--Wiener function concentrating about $\nu$ and verifying certain positivity properties.

\begin{lemma}\label{lemma:test-fn}\cite[\S 4]{BrMa}
For $t\geq 1$ and $\nu\in i\af^*$ there is $k_{\nu,t}\in C^\infty_c(G\!\!\sslash \!\! K)_{\leq 1/t}$ whose Harish-Chandra transform $h_{\nu,t}\in\mathscr{PW}(\af^*_\C)_t^W$ satisfies 
\begin{enumerate}
\item\label{1} $h_{\nu,t}$ is real and non-negative on $\af^*_{\rm hm}$;
\item\label{rapid-decay} for all $\lambda\in\af^*_{\rm un}$ we have
\[
h_{\nu,t}(\lambda)\ll_A (1+\|({\rm Im}\,\lambda-\nu)/t\|_2)^{-A};
\]
\item\label{size-h} there are constants $0<c_1,c_2<1$ such that for $\lambda\in\af_{\rm un}^*$ with $\|{\rm Im}\,\lambda-\nu\|_2\leq c_1t$ we have $c_2\leq h_{\nu,t}(\lambda)\leq 2$;
\item\label{size-k} $k_{\nu,t}(g)\ll\tilde\beta (\nu)t^r(1+\|\nu\|_2 d(g,K))^{-1/2}$, where $r=\dim\af$.
\end{enumerate}
\end{lemma}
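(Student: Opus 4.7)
The plan is to follow the template of \cite[\S 4]{BrMa} and construct $h_{\nu,t}$ as the modulus squared of a $W$-symmetrized Paley--Wiener function localized at the Weyl orbit of $\nu$. Fix once and for all a non-negative, real, even, $W$-invariant bump $\chi_0 \in C_c^\infty(\af)$ with $\int_\af \chi_0 = 1$ and support contained in the unit ball, and rescale it as $\chi(H) := t^r \chi_0(tH)$, so that $\chi$ is supported in the ball of radius $1/t$. Its Fourier--Laplace transform
\[
\tilde\chi(\lambda) = \int_\af \chi(H)\, e^{-\langle \lambda, H\rangle}\, dH
\]
is entire, of exponential type $1/t$, bounded by one, close to one near the origin, and rapidly decaying at scale $t$ along imaginary directions thanks to the smoothness of $\chi_0$. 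Averaging and symmetrizing, set
\[
g_\nu(\lambda) = \frac{1}{|W|}\sum_{w \in W} \tilde\chi(\lambda - w\nu), \qquad h_{\nu,t}(\lambda) = g_\nu(\lambda) \cdot \overline{g_\nu(-\bar\lambda)}.
\]
Both are $W$-invariant Paley--Wiener functions, and one takes $k_{\nu,t}$ to be the inverse Harish-Chandra transform of $h_{\nu,t}$.

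I would then verify the four conclusions in order. The non-negativity in (1) is immediate: for $\lambda \in \af^*_{\rm hm}$ there is $w \in W$ with $w\lambda = -\bar\lambda$, so $W$-invariance of $g_\nu$ forces $g_\nu(-\bar\lambda) = g_\nu(\lambda)$ and hence $h_{\nu,t}(\lambda) = |g_\nu(\lambda)|^2 \geq 0$. The rapid decay in (2) follows from the standard Schwartz-type estimate for the Fourier transform of a smooth compactly supported function, combined with the uniform strip inclusion $\|\operatorname{Re}\lambda\|_2 \leq \|\rho\|_2$ valid on $\af^*_{\rm un}$ by \eqref{eq:unitary:rep}. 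For the lower bound in (3), sufficient regularity of $\nu$ separates the Weyl translates $\{w\nu\}_{w \in W}$ at scale $t$; in a small ball about $\nu$ only the term $w=1$ of the defining sum contributes appreciably, and the normalization $\tilde\chi(0)=1$ forces $|g_\nu(\lambda)| \approx 1/|W|$ there, which yields the two-sided bound by continuity after suitable adjustment of $c_1$.

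The delicate step is the pointwise bound (4) on $k_{\nu,t}$. Applying the Plancherel inversion formula \eqref{Plancherel-inversion} in its translated form gives
\[
k_{\nu,t}(g) = \frac{1}{|W|}\int_{i\af^*} h_{\nu,t}(\mu)\, \varphi_\mu(g)\, \beta(\mu)\, d\mu.
\]
By the rapid decay from (2) the effective domain of integration is a ball of volume $\asymp t^r$ around $\nu$; on this region the Plancherel density is controlled by $\tilde\beta(\nu)$ via $\beta \ll \tilde\beta$ together with \eqref{beta-tilde-bound}; and the spherical function satisfies the oscillatory estimate $|\varphi_\mu(g)| \ll (1+\|\mu\|_2\, d(g,K))^{-1/2}$ for $\mu \in i\af^*$. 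Multiplying these three contributions produces the claimed inequality.

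The main obstacle is precisely this last spherical-function input. Obtaining the uniform bound $|\varphi_\mu(g)| \ll (1+\|\mu\|_2\, d(g,K))^{-1/2}$ requires a stationary-phase analysis of the Harish-Chandra integral representation of $\varphi_\mu$, in the spirit of Duistermaat--Kolk--Varadarajan, and cannot be deduced from the trivial pointwise majorization by the Harish-Chandra $\Xi$-function, which is independent of the spectral parameter. This quantitative spherical-function estimate is the key technical ingredient carried over from \cite{BrMa}.
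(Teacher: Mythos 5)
The lemma is imported verbatim from \cite[\S 4]{BrMa}; this paper does not reprove it, so there is no internal argument to compare against. That said, your proposal follows the natural template (Weyl-symmetrized Paley--Wiener function squared, inverse Harish-Chandra transform), and the structure is plausible. Three concrete points deserve attention.

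First, a normalization issue on \eqref{rapid-decay}. Your $g_\nu(\lambda)=\frac{1}{|W|}\sum_w\tilde\chi(\lambda-w\nu)$ decays away from the full Weyl orbit $W\nu$, not merely from $\nu$, so $h_{\nu,t}(\lambda)$ can be of size $1$ when $\Im\lambda$ is near some $w\nu$ with $w\neq 1$ even though $\|\Im\lambda-\nu\|_2$ is large. The stated inequality is therefore false unless one fixes a dominant representative for both $\nu$ and $\Im\lambda$, in which case $\|\Im\lambda-\nu\|_2=\min_w\|\Im\lambda-w\nu\|_2$ by Weyl-chamber convexity. You should make this normalization explicit, since without it claim \eqref{rapid-decay} as written simply does not hold for your $h_{\nu,t}$.

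Second, and more seriously, your argument for \eqref{size-h} quietly imports a hypothesis the lemma does not have: you say ``sufficient regularity of $\nu$ separates the Weyl translates $\{w\nu\}$ at scale $t$,'' but the lemma is stated for arbitrary $\nu\in i\af^*$ with constants $c_1,c_2$ independent of $\nu$. When some $w\nu$ sits at distance $\asymp t$ from $\nu$, the term $\tilde\chi(\lambda-w\nu)$ need not be small, and since you have only required $\chi_0$ to be a real even bump, its Fourier transform can be negative of size close to $1$ there; cancellation against the $w=1$ term can then destroy the lower bound. The standard cure — and almost certainly what \cite{BrMa} does — is to build non-negativity into the building block by taking $\chi_0=\psi_0*\psi_0^\vee$ so that $\tilde\chi_0=|\tilde\psi_0|^2\ge 0$ on $i\af^*$; then every term of $g_\nu$ is non-negative on the tempered line, the $w=1$ term alone gives $g_\nu(\nu)\ge 1/|W|$, and no regularity is needed. (This also keeps $h_{\nu,t}\ge 0$ on $\af^*_{\rm hm}$ automatically.) Note as a minor point that with $h_{\nu,t}=g_\nu\cdot\overline{g_\nu(-\bar\cdot)}$ the exponential type doubles, so the seed bump should be supported in a ball of radius $1/(2t)$ rather than $1/t$ to land in $C_c^\infty(G\sslash K)_{\le 1/t}$.

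Third, for \eqref{size-k} you correctly identify the pointwise spherical-function estimate $|\varphi_\mu(g)|\ll(1+\|\mu\|_2\,d(g,K))^{-1/2}$ as the load-bearing input, and correctly observe that it does not follow from the $\Xi$-function bound; this is the content that must be taken from \cite{BrMa}. The way you combine it is fine: for $\|\nu\|_2\gg t$ the effective region of integration has $\|\mu\|_2\asymp\|\nu\|_2$, while for $\|\nu\|_2\lesssim t$ the target factor $(1+\|\nu\|_2 d(g,K))^{-1/2}$ is $\asymp 1$ on the support $d(g,K)\le 1/t$ and the trivial bound suffices. One small caveat: the control of the Plancherel density via \eqref{beta-tilde-bound} actually produces $\tilde\beta(t,\nu)$ rather than $\tilde\beta(\nu)$ on the $O(t)$-ball about $\nu$; these agree for $t=1$ (the case used to establish \eqref{identity-size}) but differ for large $t$, so either the lemma is tacitly applied for $\nu$ that is $t$-regular or a slightly more careful bookkeeping is needed in your step (4).
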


\subsection{Proof of Proposition \ref{thm:spec-var}.}
Recall that for $\nu\in i\af^*$ and $c>0$, we denote by  $B(\nu, c)$ the ball $\{\lambda\in \af_\C^*\mid \|\lambda-\nu\|_2\le c\}$ in $\af_\C^*$. Moreover, for  any set $\Omega\subseteq \af^*_\C$,  we write $N(\Omega,\Gamma_n)=|\{j\mid \nu_j^{(n)}\in \Omega\}|$.

We shall in fact prove a stronger statement than Proposition \ref{thm:spec-var}. We shall show that there exist $0<\varrho_1<1<\varrho_2$ such that the following holds. Fix $\epsilon>0$, and let $\nu\in i\af^*$ be $\varepsilon$-regular.
Then
\begin{equation}\label{more-precise}
N(B(\nu,\varrho_1),\Gamma_n)\ll \vol(\Gamma_n\bs G)\tilde\beta(\nu)\ll_\varepsilon N(B(\nu,\varrho_2),\Gamma_n).
\end{equation}
Proposition \ref{thm:spec-var} then follows from the second estimate above and the fact that $\tilde\beta(\nu)\geq 1$ for all $\nu\in i\af^*$.
 
In the notation of Lemma \ref{lemma:test-fn}, we write $h_\nu=h_{\nu,1}$ and $k_\nu=k_{\nu,1}$. Then
\[
\sum_{\lambda\in \Lambda_n}h_\nu(\lambda) 
= \vol(\Gamma_n\backslash G) k_\nu(1) + \int_{\Gamma_n\backslash G} \sum_{\substack{\gamma\in \Gamma_n\\\gamma \neq 1}} k_\nu(g^{-1}\gamma g)\, dg,
\]
where $\Lambda_n$ denotes the spectrum of $\Gamma_n\backslash G$ (each $\lambda$ appearing with its respective multiplicity). 

To deal with the second term, we use part \eqref{size-k} of Lemma \ref{lemma:test-fn}, as well as the support condition on $k_\nu$, to deduce
\[
\int_{\Gamma_n\backslash G} \sum_{\substack{\gamma\in \Gamma_n\\\gamma \neq 1}} k_\nu(g^{-1}\gamma g)\, dg\ll \tilde\beta(\nu)\int_{\Gamma_n\backslash G} |\{\gamma\in \Gamma_n\setminus\{1\}: d(g,\gamma g)\leq 1\}|dg.
\]
As $\Gamma_n$ is torsion free, the inner sum is empty for all $g\in (\Gamma_n\bs G)_{>1}$, so that it suffices to bound
\[
\int_{(\Gamma_n\backslash G)_{\leq 1}} N_R(g)dg,
\]
where
\begin{equation}\label{defn-NRg}
N_R(g)=|\{\gamma\in \Gamma_n: d(g,\gamma g)\leq 1\}|.
\end{equation}
For $g\in (\Gamma_n\bs G)_{\leq 1}$ we apply \cite{AB+}*{Lemma 6.18}, which provides a constant $C>0$, depending only on $G$, such that $N_R(g)\leq C \InjR_{\Gamma_n}(g)^{-\dim S}$. The uniform discreteness of the (torsion free)  $\Gamma_n$ implies that $\InjR_{\Gamma_n}(g)^{-1} \ll 1$, uniformly in $n$ and $g$. Taking these estimates together we get
\begin{equation}\label{rough-smooth-count}
\sum_{\lambda\in \Lambda_n}h_\nu(\lambda) 
=\vol(\Gamma_n\backslash G)k_\nu(1) +O(\tilde\beta(\nu)\vol(\Gamma_n\backslash G)_{\leq 1}).
\end{equation}

Furthermore, we have
\begin{equation}\label{identity-size}
k_\nu(1)\asymp_\epsilon \tilde\beta(\nu),
\end{equation}
where only the lower bound depends on $\varepsilon$. Indeed, the upper bound results from part \eqref{size-k} of Lemma \ref{lemma:test-fn}. To obtain the lower bound, one applies the Plancherel inversion formula \eqref{Plancherel-inversion}, and parts \eqref{1} and \eqref{size-h} of Lemma \ref{lemma:test-fn}, to get 
\[
k_\nu(1)\geq \int_{\substack{\lambda\in i\af^*\\ \|\lambda-\nu\|_2\leq\delta}} h_\nu(\lambda)\beta(\lambda)d\lambda\geq  c_2\int_{\substack{\lambda\in i\af^*\\ \|\lambda-\nu\|_2\leq\delta}}\beta(\lambda)d\lambda,
\]
for any $0<\delta<c_1$. Recall that $\nu\in i\af^*$ is $\varepsilon$-regular. Taking $\delta$ small enough, we may assume that the $\lambda\in i\af^*$ such that $\|\lambda-\nu\|_2\leq\delta$ are $\varepsilon/2$-regular. Then the lower bound follows from the inequality $\beta(\lambda)\gg_\varepsilon \tilde\beta(\lambda)$ for such $\lambda$; see \cite[(3.44a)]{DKV}.

From \eqref{rough-smooth-count} and \eqref{identity-size} it follows that
\[
\sum_{\lambda\in \Lambda_n}h_\nu(\lambda) \asymp_\varepsilon \vol(\Gamma_n\backslash G)\tilde\beta(\nu)\left(1+O\left(\frac{\vol(\Gamma_n\backslash G)_{\leq 1}}{\vol(\Gamma_n\backslash G)}\right)\right),
\]
where only the lower bound depends on $\varepsilon$. By the Benjamini--Schramm assumption, we have $\vol(\Gamma_n\backslash G)_{\leq 1}=o(\vol(\Gamma_n\backslash G))$ as $n\rightarrow\infty$. Thus, for $n$ large enough we have
\begin{equation}\label{smooth-count}
\sum_{\lambda\in \Lambda_n}h_\nu(\lambda) \asymp_\varepsilon \vol(\Gamma_n\backslash G)\tilde\beta(\nu)
\end{equation}
For the first bound in \eqref{more-precise}, it suffices at this point to take $\varrho_1=c_1$, apply parts \eqref{1} and \eqref{size-h} of Lemma \ref{lemma:test-fn}, and then quote the upper bound in \eqref{smooth-count} to get
\[
N(B(\nu,c_1),\Gamma_n)\leq c_2 \sum_{\substack{\lambda\in\Lambda_n\\ \|\lambda-\nu\|_2\leq c_1}}h_\nu(\lambda)\leq c_2\sum_{\lambda\in \Lambda_n}h_\nu(\lambda) \ll \vol(\Gamma_n\backslash G)\tilde\beta(\nu).
\]
For the second bound in \eqref{more-precise}, we must show that the left-hand side in \eqref{smooth-count} approximates $N(B(\nu,\varrho),\Gamma_n)$, for some $\varrho>0$.

A crucial ingredient for passing from a smooth count as above to a sharp count will be a good upper bound on $N(B(\mu,t),\Gamma_n)$, for any center $\mu\in i\af^*$ and any $t\geq 1$. This is proved similarly to the preceding analysis. Indeed, from Lemma \ref{lemma:test-fn}, together with the preceding geometric argument, we obtain
\[
\sum_{\substack{\lambda\in\Lambda_n\\\|{\rm Im}\,\lambda-\mu\|_2\leq t}} 1\ll  
 \sum_{\lambda\in\Lambda_n}h_{\mu,c_1^{-1}t}(\lambda)=\vol(\Gamma_n\backslash G) k_{\mu,c_1^{-1}t}(1) + o\big(\vol(\Gamma_n\backslash G)\tilde\beta(\mu) \big).
\]
Now $k_{\mu,t}(1)\ll \tilde\beta(\mu)t^r$ from part \eqref{size-k} of Lemma \ref{lemma:test-fn}. We conclude that (for $n$ large enough)
\begin{equation}\label{Weyl-upper}
N(B'(\mu,t),\Gamma_n)\ll \vol(\Gamma_n\backslash G)\tilde\beta(\mu)t^r,
\end{equation}
 where $B'(\mu,t)=\{\lambda\in \af^*_\C\mid \|{\rm Im}\, \lambda - \mu\|_2\le t\}$.
We now return to the left-hand side of \eqref{smooth-count}. We first claim:

\begin{lemma}\label{lem:away:from:center}
For every $\varrho>1$,
\begin{equation}\label{away-from-center}
\sum_{\substack{\lambda\in\Lambda_n\\ \|{\rm Im}\, \lambda-\nu\|_2>\varrho}}h_\nu(\lambda) \ll_A \vol(\Gamma_n\backslash G)\tilde\beta(\nu)\varrho^{-A}.
\end{equation}
\end{lemma}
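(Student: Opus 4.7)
The plan is to exploit the rapid decay of $h_\nu$ away from $\nu$ guaranteed by part \eqref{rapid-decay} of Lemma \ref{lemma:test-fn}, combined with the polynomial Weyl upper bound \eqref{Weyl-upper}, via a dyadic decomposition of the summation region. Since $\Lambda_n\subset\af^*_{\rm un}$, one may write
\[
\{\lambda\in\Lambda_n : \|{\rm Im}\,\lambda - \nu\|_2 > \varrho\} = \bigsqcup_{k\geq 0}\mathscr{S}_k, \qquad \mathscr{S}_k = \{\lambda\in\Lambda_n : 2^k\varrho < \|{\rm Im}\,\lambda - \nu\|_2 \leq 2^{k+1}\varrho\}.
\]

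On the shell $\mathscr{S}_k$, part \eqref{rapid-decay} of Lemma \ref{lemma:test-fn} applied with $t=1$ yields $h_\nu(\lambda)\ll_A (1+2^k\varrho)^{-A}\ll_A (2^k\varrho)^{-A}$. For the cardinality, the hypothesis $\varrho>1$ ensures $2^{k+1}\varrho\geq 1$, so the local Weyl upper bound \eqref{Weyl-upper}, applied with $\mu=\nu$ and $t=2^{k+1}\varrho$, gives
\[
|\mathscr{S}_k|\leq N(B'(\nu,2^{k+1}\varrho),\Gamma_n)\ll \vol(\Gamma_n\backslash G)\,\tilde\beta(\nu)\,(2^{k+1}\varrho)^r,
\]
where $r=\dim\af$. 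Combining these two estimates and summing over $k\geq 0$ gives
\[
\sum_{\substack{\lambda\in\Lambda_n\\ \|{\rm Im}\,\lambda - \nu\|_2 > \varrho}}h_\nu(\lambda)\ll_A \vol(\Gamma_n\backslash G)\,\tilde\beta(\nu)\,\varrho^{r-A}\sum_{k\geq 0}2^{(r-A)k+r}.
\]
Taking $A>r$ makes the geometric series converge to a bounded constant; replacing $A$ by $A+r$ in the resulting inequality (which leaves $A$ still arbitrary) yields the claim.

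The main technical point to check is that the polynomial blow-up in $t^r$ of the Weyl count on ever larger balls is dominated by the super-polynomial decay of $h_\nu$, and this domination is automatic once the dyadic shells are in place. A subtle but essential feature of \eqref{Weyl-upper} is that the prefactor $\tilde\beta(\mu)$ is evaluated at the \emph{center} of the ball; centering all enlarged balls at $\nu$ itself (rather than at the shells $\mathscr{S}_k$) keeps this prefactor uniformly equal to $\tilde\beta(\nu)$, which is precisely what the right-hand side of the lemma demands. Beyond this, there is no real obstacle: the argument is a routine dyadic estimate, relying only on the rapid-decay bound \eqref{rapid-decay} and the local Weyl upper bound \eqref{Weyl-upper} that were already established.
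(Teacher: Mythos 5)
Your proof is correct, and it takes a genuinely different decomposition from the paper. The paper covers the exterior region $\{\|\mathrm{Im}\,\lambda-\nu\|_2>\varrho\}$ by unit balls centred at points of an auxiliary affine lattice $\Lambda$ in $i\af^*$, applies the Weyl bound \eqref{Weyl-upper} with $t=1$ on each unit ball, and then must invoke the growth estimate \eqref{beta-tilde-bound} to convert the resulting factor $\tilde\beta(\mu)$ into $\tilde\beta(\nu)$ times a polynomial in $\|\mu-\nu\|_2$, which is then absorbed into the rapid decay of $h_\nu$. You instead use dyadic shells centred at $\nu$ and apply the Weyl bound with large $t=2^{k+1}\varrho$. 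This keeps the centre of every ball at $\nu$, so the prefactor $\tilde\beta(\nu)$ appears immediately and the auxiliary estimate \eqref{beta-tilde-bound} is not needed. Both arguments rest on exactly the same two ingredients (rapid decay from part \eqref{rapid-decay} of Lemma \ref{lemma:test-fn} and the local Weyl upper bound \eqref{Weyl-upper}); your version buys a small simplification by avoiding the $\tilde\beta$-comparison, at the cost of needing the Weyl bound uniformly in large $t$ rather than only at $t=1$, which the paper has in any case established for all $t\ge 1$. One further small point in your favour: the paper's argument requires $\varrho$ sufficiently large to conclude, whereas your geometric-series estimate gives the bound directly for every $\varrho>1$ once $A>r$; the paper's restriction is harmless anyway since the implied constant in $\ll_A$ absorbs bounded $\varrho$.
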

\begin{proof}

To prove \eqref{away-from-center} we cover $\{\lambda\in i\af^*: \|\lambda-\nu\|_2>\varrho\}$ by balls of unit radius with centers based at an affine lattice $\Lambda\subset i\af^*$ containing $\nu$. Let $A(\nu,\varrho)$ denote the set of all $\lambda\in \Lambda$ such that the ball of radius $1$ in $i\af^*$ around $\lambda$ is entirely contained in the ball of radius $\varrho$ in $i\af^*$ around $\nu$.

For any fixed $\mu\in\Lambda-A(\nu,\varrho)$, we bound the contribution of $\lambda\in\Lambda_n$ for which ${\rm Im}\,\lambda\in B(\mu)$ by using part \eqref{rapid-decay} of Lemma \ref{lemma:test-fn} to get
\[ 
\sum_{\substack{\lambda\in\Lambda_n\\ \|{\rm Im}\,\lambda-\mu\|_2\leq 1}}h_\nu(\lambda) \ll_A (1+\|\mu-\nu\|_2)^{-A}N(B'(\mu,1),\Gamma_n).
\]
Using \eqref{Weyl-upper} with $t=1$ and summing over $\mu\in\Lambda- A(\nu, \varrho)$ we get
\[
\sum_{\substack{\lambda\in\Lambda_n\\ \|{\rm Im}\,\lambda-\nu\|_2>\varrho}}h_\nu(\lambda)\ll_A \vol(\Gamma_n\backslash G)\sum_{\mu\in\Lambda- A(\nu, \varrho)} (1+\|\mu-\nu\|_2)^{-A}\tilde\beta(\mu).
\]
Now by \eqref{beta-tilde-bound} we have $\tilde\beta(\mu)\ll (1+\|\mu-\nu\|_2)^{\dim\nf}\tilde\beta(\nu)$, so that
\[
\sum_{\|{\rm Im}\, \lambda-\nu\|_2>\varrho}h_\nu(\lambda)\ll_A \vol(\Gamma_n\backslash G)\tilde\beta(\nu)\sum_{\mu\in\Lambda-A(\nu,\varrho)}(1+\|\mu-\nu\|_2)^{-A}.
\]
For $\varrho$ sufficiently large, this last sum is at most $O_A(\varrho^{-A})$, as desired.
\end{proof}

From this lemma, we want to deduce the following estimate:
\begin{equation}\label{away-from-center1}
\sum_{\substack{\lambda\in\Lambda_n\\ \|\lambda-\nu\|_2>\varrho}}h_\nu(\lambda) \ll_A \vol(\Gamma_n\backslash G)\tilde\beta(\nu)\varrho^{-A}.
\end{equation}
For this we split the sum on the left hand side into three parts,
\[
\sum_{\substack{\lambda\in\Lambda_n\\ \|{\rm Im}\, \lambda-\nu\|_2>\varrho}}h_\nu(\lambda)
+ \sum_{\substack{\lambda\in\Lambda_n\\  \varrho^2-\|\rho\|_2^2< \|{\rm Im}\,\lambda-\nu\|_2^2\le \varrho^2, \\ \|{\rm Re}\, \lambda\|_2^2> \varrho^2-\|{\rm Im}\,\lambda-\nu\|_2^2  }}h_\nu(\lambda)
+ \sum_{\substack{\lambda\in\Lambda_n\\ \|{\rm Im} \lambda-\nu\|_2^2\le \varrho^2-\|\rho\|_2^2,\\ \|{\rm Re}\, \lambda\|_2^2> \varrho^2-\|{\rm Im}\,\lambda-\nu\|_2^2  }}h_\nu(\lambda).
\]
Using Lemma \ref{lem:away:from:center} the first sum can be bounded by $\ll_A \vol(\Gamma_n\backslash G) \tilde\beta(\nu) \varrho^{-A}$. The third sum is in fact empty: the conditions on the real and complex part of $\lambda$ imply that $\|\Re\lambda\|_2> \|\rho\|_2$, which is not possible for $\lambda$ in the spectrum of $L^2(\Gamma_n\backslash G)$ by \eqref{eq:unitary:rep}. Finally, for the second term, we extend the sum to all $\lambda\in \Lambda_n$ with $\|{\Im }\, \lambda-\nu\|_2^2\ge \varrho^2-\|\rho\|_2^2$. This is possible as $h_\nu(\lambda)$ is non-zero on the unitary spectrum by Lemma \ref{lemma:test-fn}. Hence we can use Lemma \ref{lem:away:from:center} to bound this term by $\ll_A (\varrho^2-\|\rho\|_2^2)^{-A/2}$.

We deduce from the lower bound in \eqref{smooth-count}, as well as \eqref{away-from-center1} that, with $A>1$ fixed and $\varrho$ large enough with respect to the implied constant,
\[
\sum_{\substack{\lambda\in \Lambda_n\\\|\lambda-\nu\|_2\leq \varrho}}h_\nu(\lambda)\gg_\varepsilon \vol(\Gamma_n\backslash G) \tilde\beta(\nu).
\]
On the other hand, by applying part \eqref{size-h} of Lemma \ref{lemma:test-fn} we have
\[
N(B(\nu,\varrho),\Gamma_n)\gg \sum_{\substack{\lambda\in\Lambda_n\\ \|\lambda-\nu\|_2\leq \varrho}}h_\nu(\lambda). 
\]
Combing these yields the second bound in \eqref{more-precise}, with $\varrho_2=\varrho$. \qed

\section{Spectral side}\label{sec:spectral}

We now return to the setting of $G=\SL_d(\R)$ and prove Theorem \ref{thm:spec}. In the course of the proof, we will make use of both Appendices \ref{appendix:X0} and \ref{sec:cones}.

We first examine how $k_t$ acts on eigenfunctions $\psi_\lambda$. The Harish-Chandra transform of $k_t$ is given by
\begin{equation}\label{sph-inv}
h_t(\lambda)=\frac{1}{\sqrt{m_G(E_t)}}\int_{E_t} \varphi_\lambda(g)\, dg.
\end{equation}
Then, recalling the definition \eqref{defn-propogator}, we have
\[
U_t\psi_\lambda=h_t(\lambda)\psi_\lambda,
\]
which follows from the doubling formula of elementary spherical functions. 

Let $\mathbf{A}(\tau)$ be the time averaging operator of \eqref{defn-Abar}. Since $k_t$ is self-adjoint we have
\[
\bigg|\langle \mathbf{A}(\tau)\psi_\lambda , \psi_\lambda\rangle_{L^2(Y)}\bigg|^2=\bigg(\frac{1}{\tau}\int_0^\tau |h_t(\lambda)|^2\, dt\bigg)^{\!\! 2}\,\bigg|\langle a\psi_\lambda, \psi_\lambda\rangle_{L^2(Y)}\bigg|^2.
\]
To prove Theorem \ref{thm:spec}, it will therefore be enough to show that $|h_t(\lambda)|$ is bounded away from $0$ on average over $t$. More precisely, we prove the following higher rank generalization of \cite[\S 8.1]{ABL}. 

\begin{proposition}\label{prop:average:lower:bound}
Let $G=\SL_d(\R)$. Given a compact set $\Omega\subseteq i\af^*$, there exist constants $C,\tau_0>0$, depending on $\Omega$, such that, for every $\tau\ge \tau_0$ and $\lambda\in\Omega$, 
 \[
\frac{1}{\tau}\int_0^\tau |h_t(\lambda)|^2\, dt \ge C.
 \]
\end{proposition}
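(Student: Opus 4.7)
My approach is to obtain a sharp asymptotic expansion of $h_t(\lambda)$ as $t\to\infty$ of the shape
\[
h_t(\lambda) = \sum_{w\in W} a_w(\lambda)\,e^{it\mu_w(\lambda)} + o(1),
\]
with real frequencies $\mu_w(\lambda)$ that are linear in $\lambda$, and then extract the lower bound from the orthogonality of distinct characters $t\mapsto e^{it\mu_w(\lambda)}$ under the $L^2$ time average.

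\textbf{Step 1: Reduction to an oscillatory integral over $\cP^+$.} Using the Cartan decomposition of Haar measure \eqref{Cartan-measure-decomp} and the polar description $E_t=K\exp(\cP_t^+)K$ from \eqref{Et-polar}, we rewrite
\[
\int_{E_t}\varphi_\lambda(g)\,dg = c_C\int_{\cP_t^+}\varphi_\lambda(e^X)\,J(X)\,dX.
\]
On the open positive Weyl chamber, the Harish-Chandra expansion of $\varphi_\lambda$ combined with the leading asymptotic $J(X)=2^{-\dim N}e^{2\rho(X)}(1+o(1))$ gives
\[
\varphi_\lambda(e^X)J(X) = 2^{-\dim N}\sum_{w\in W}\mathbf{c}(w\lambda)\,e^{(w\lambda+\rho)(X)}\bigl(1+O(e^{-\delta(X)})\bigr),
\]
for some strictly positive combination $\delta$ of simple roots. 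Rescaling $X=tY$ reduces the main problem to analysing $t^r\sum_{w\in W}\mathbf{c}(w\lambda)\int_{\cP^+}e^{t(w\lambda+\rho)(Y)}\,dY$.

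\textbf{Step 2: Laplace analysis at the critical vertex $X^0$.} Since $\lambda\in i\af^*$, the real part of the phase $(w\lambda+\rho)(Y)$ equals $\rho(Y)$, which by \eqref{defnX0} is uniquely maximised on the polytope $\cP^+$ at the vertex $X^0$ (explicitly described in Lemma \ref{lem:X0:explicit} and Appendix \ref{appendix:X0}). Using the cone of edge directions $v_1,\ldots,v_r$ of $\cP^+$ at $X^0$, as worked out in Appendix \ref{sec:cones}, a standard polytope-vertex Laplace argument yields
\[
\int_{\cP^+}e^{t(w\lambda+\rho)(Y)}\,dY \sim t^{-r}\,e^{t(w\lambda+\rho)(X^0)}\,C_w(\lambda), \qquad t\to\infty,
\]
where $C_w(\lambda)$ is a rational function of the complex exponents $(w\lambda+\rho)(v_i)$; the denominators are non-zero because $\rho(v_i)<0$ strictly by uniqueness of $X^0$, so $C_w(\lambda)$ is bounded and non-vanishing on any compact set of sufficiently regular $\lambda$. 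Combining this with the volume asymptotic $m_G(E_t)\sim c_2\,e^{2t\rho(X^0)}$ of Corollary \ref{rem:volume}, the real exponential $e^{t\rho(X^0)}$ cancels against $\sqrt{m_G(E_t)}$, leaving
\[
h_t(\lambda) = \sum_{w\in W}a_w(\lambda)\,e^{it\langle\lambda',\,w^{-1}X^0\rangle} + O(t^{-\varepsilon}),
\]
where $\lambda=i\lambda'$ with $\lambda'\in\af^*$ and $a_w(\lambda)$ is a nonzero constant multiple of $\mathbf{c}(w\lambda)C_w(\lambda)$.

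\textbf{Step 3: Time averaging, and the main obstacle.} Grouping $w\in W$ by left cosets of the stabiliser $W_{X^0}=\{w\in W:wX^0=X^0\}$ and setting $A_{[w]}(\lambda)=\sum_{w'\in[w]}a_{w'}(\lambda)$, the frequencies $\langle\lambda',w^{-1}X^0\rangle$ depend only on $[w]$, and for sufficiently regular $\lambda\in\Omega$ they are pairwise distinct with gaps bounded below uniformly. Expanding $|h_t(\lambda)|^2$ and using $\frac{1}{\tau}\int_0^\tau e^{it\sigma}dt = O((|\sigma|\tau)^{-1})$ for $\sigma\ne 0$ yields
\[
\frac{1}{\tau}\int_0^\tau|h_t(\lambda)|^2\,dt = \sum_{[w]\in W/W_{X^0}}|A_{[w]}(\lambda)|^2 + O(\tau^{-1}),
\]
uniformly in $\lambda\in\Omega$. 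The hardest part will be proving that the leading sum is bounded below by a positive constant on $\Omega$. In rank one, $W_{X^0}$ is trivial and the two contributions $a_{\pm 1}(\lambda)$ cannot cancel; but for $\SL_d(\R)$ the stabiliser can be large (for $d$ even, $W_{X^0}\cong S_{d/2}\times S_{d/2}$), so within each coset one is summing many Weyl translates of the $\mathbf{c}$-function weighted by cone factors, and one must rule out identical cancellation. I expect this to be the main technical obstacle, requiring the explicit combinatorics of $X^0$ and its edge cone from Appendices \ref{appendix:X0} and \ref{sec:cones}, together with the standard non-vanishing and linear independence properties of $\mathbf{c}(w\lambda)$ on the regular part of $i\af^*$.
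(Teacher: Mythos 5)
Your overall strategy---Cartan coordinates, asymptotic expansion of the spherical function, localization near the maximizing vertex $X^0$, and a linear-independence-of-characters argument---is in the same spirit as the paper's. But there are two serious problems, one of which is fatal to Step~2 as written.

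\textbf{First, the vertex $X^0$ lies on walls of $\overline{\af^+}$, so the Harish-Chandra expansion relative to the split torus $A$ (summing over all of $W$, as in your Step~1) is the wrong tool.} From Lemma \ref{lem:X0:explicit}, $X^0$ has many repeated coordinates (e.g.\ $X^0=(1,1,-1,-1)$ for $d=4$), hence $\alpha(X^0)=0$ for several simple roots $\alpha$. The full expansion $\varphi_\lambda(e^X)\sim\sum_{w\in W}\mathbf{c}(w\lambda)e^{(w\lambda-\rho)(X)}$ is valid only as $X$ moves deep into the interior of $\af^+$, with all $\alpha(X)\to\infty$; it degenerates near walls, which is exactly where the phase $\rho(Y)$ attains its maximum. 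Your rescaling $X=tY$ does not help: the dominant contribution comes from $Y$ near $X^0$, i.e.\ from $X=tY$ that stay at bounded distance from certain walls, where the $A$-relative expansion is not uniform. This is precisely why the paper invokes the Trombi--Varadarajan expansion (Proposition \ref{prop:GV2}) \emph{relative to the Levi} $M=Z_G(X^0)$: it replaces $\varphi_\lambda$ by a main term $\Phi_\lambda$ built from $W^M\backslash W$ cosets and the Levi spherical functions $\varphi^M_{w\lambda}$, and that expansion is uniform along the wall directions that pass through $X^0$. Your vertex-Laplace analysis would also need to account for the fact that $X^0$ is not a simple vertex of $\cP^+$ inside $\af$ in the naive sense, since one must intersect the edge cone with $\overline{\af^+}$ and part of the cone is tangent to the chamber boundary at $X^0$; the paper sidesteps this by integrating $\Phi_\lambda$ over the full cone $\cC^0$ and proving absolute convergence directly (Lemma \ref{lem:wInt:nonzero}), rather than by a stationary-phase expansion.

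\textbf{Second, you yourself flag the key remaining gap and do not resolve it.} After grouping by cosets of $W_{X^0}=W^M$, the claim that $\sum_{[w]}|A_{[w]}(\lambda)|^2$ is bounded below on $\Omega$ is not obvious; within each coset there could in principle be complete cancellation among the $|W^M|$ terms, and for $d$ even $W^M\cong S_{d/2}\times S_{d/2}$ is large. The paper does not attempt to control each coset-sum separately. Instead, it proves non-vanishing of $I(t,\lambda)$ for a dense set of \emph{rational} $\lambda$ via holomorphy and a limsup bound (Proposition \ref{prop:nonzero}, using \cite{HC58} Lemma 56), exploits exact periodicity of $\Phi_\lambda$ along $X^0$ (Lemma \ref{lem:periodic}) to make the lower bound persist for all large $\tau$ (Lemma \ref{cor:large:average:rational}), and only then passes from the dense set to all of $\Omega$ by continuity and compactness. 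Your time-averaged identity $\frac1\tau\int_0^\tau|h_t|^2\,dt = \sum_{[w]}|A_{[w]}|^2 + O(\tau^{-1})$ also silently assumes uniform control of the $O(t^{-\varepsilon})$ error in $h_t$ across $[0,\tau]$, which is not free once one tracks the wall issues above.

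In short: the high-level idea is reasonable, but Step~2 uses an expansion that is invalid at the point where it is needed most, and Step~3 leaves exactly the non-trivial part of the proposition unproved.
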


\subsection{Main idea of proof}\label{subsection:ideas}

We begin by writing the integral defining $h_t$ in polar coordinates, according to the Cartan decomposition. Recall the averaging set $E_t$ from \S\ref{sec:Et} and its polar decomposition \eqref{Et-polar}. Letting $f_\lambda(X)=e^{\langle \rho,X\rangle}\varphi_\lambda(e^X)$, and using the Cartan measure decomposition in \eqref{Cartan-measure-decomp}, we find that \eqref{sph-inv} becomes
\begin{equation}\label{sph-inv2}
h_t(\lambda)=\frac{c_C}{\sqrt{m_G(E_t)}}\int_{\mathscr{P}_t^+} f_\lambda(e^X)J(X)e^{-\langle\rho,X\rangle}dX.
\end{equation}

The idea of the proof of Proposition \ref{prop:average:lower:bound} is to replace $f_\lambda$ by the main term $\Phi_\lambda$, discussed below, of its Harish-Chandra asymptotic expansion relative to the Levi determined by $X^0$, defined in \eqref{defnX0}. We do this by showing, in Appendix \ref{appendix:X0} (see Lemma \ref{mu-basis}), that $\mathscr{P}^+$ is 
 the intersection with $\overline{\af^+}$ of a translated cone $\mathscr{C}=X^0+\CmC^0$ in $\af$. We then show that,  when $\lambda$ is taken to be rational, $\Phi_\lambda$ behaves like a sum of characters in the direction of $X^0$. An argument using linear independence of characters then yields the result. We carry out this argument in this section, using some combinatorial results which we establish in Appendix \ref{sec:cones}. 

\subsection{The main term and periodicity}\label{sec:MT-and-periodicity}

We wish to replace $f_\lambda$ by the main term in its Harish-Chandra expansion. This is possible thanks to Proposition \ref{prop:GV2} below. To state it, we must introduce some notation, beyond that of \S\ref{sec:gen-notation}. For simplicity, we continue to take $G=\SL_d(\R)$, although Proposition \ref{prop:GV2} applies more generally.

A Levi subgroup of $G$ will be called \textit{semistandard} if it contains $A$. Let $\Delta_0\subseteq \Delta$ be a (possibly empty) subset of the set of simple roots, and let $\Phi^+_0$ be the subset of $\Phi^+$ generated by $\Delta_0$. The sets $\Delta_0$ correspond to semistandard Levi subgroups $M\subseteq G$ such that $\Phi^+_0$ is the set of positive roots of $A$ on $M$. With this identification $\Delta_0=\emptyset$ corresponds to $M=A$ and $\Delta_0=\Delta$ to $M=G$.

For fixed $\Delta_0$ and corresponding $M$ we further introduce the following notation:
\begin{itemize}
\item  $\rho^M$ is the half-sum of all $\alpha\in\Phi_0^+$, 
\item $W^M$ is the Weyl group of $A$ in $M$, 
\item $\mathbf{c}^M$ denotes the Harish-Chandra $\mathbf{c}$-function for $\Phi_0^+$, 
\item $\varphi^M_\lambda$ is the spherical function on $M$ for $\lambda\in\af_\C^*$, that is,
\[
 \varphi^M_\lambda(m)= \int_{K\cap M} e^{\langle \lambda + \rho^M, H_0(km)\rangle}\, dm.
\]
\item for $X\in \af$, $\lambda\in\af_\C^*$, and $t>0$ let $f_\lambda^M(t,X)= e^{\langle \rho^M,tX\rangle} \varphi_\lambda^M(e^{tX})$ and (setting $t=1$) let
\[
f_\lambda^M(X)=e^{\langle \rho^M,X\rangle} \varphi_\lambda^M(e^X).
\]
\item $\af_M$ is the common null space of all $\alpha\in\Phi_0^+$, and $\af^M$ is its orthocomplement in $\af$ relative to the inner product $\langle X, Y\rangle=XY$.
\end{itemize}
We follow the standard practice of omitting the superscript $M$ from the notation whenever $M=G$. In this way we recover the $f_\lambda(X)$ introduced earlier. Other than $G$, the only other Levi which will be of importance to us throughout this section is the centralizer of $X^0$ in $G$. \textit{Henceforth shall take $M$ to be the centralizer of $X^0$ in $G$, and $\Delta_0$ such that it corresponds to $M$.} In this case $X^0\in\af_M$.

For $\lambda\in i\af^*$ regular we define
\begin{equation}\label{Phi-lambda-def}
\Phi_\lambda(X) = \sum_{w\in W^M\backslash W} \frac{\mathbf{c}(w\lambda)}{\mathbf{c}^M(w\lambda)} f^M_{w\lambda}(X).
\end{equation}
In fact, we can define $\Phi_\lambda(X)$ for any regular $\lambda\in\af^*_\C$ for which the $\mathbf{c}(w\lambda)$ do not have a pole. We can make this more precise, as follows. For a real number $\eta>0$ let $A_\eta\subseteq \af^*$ denote the convex hull of the Weyl group orbit of $\eta\rho$. Define $\cT_\eta\subseteq \af^*_\C$ to be the set of all $\lambda\in \af^*_\C$ with $\Re\lambda\in A_\eta$. Let $\eta$ be so small that for every positive root $\alpha$ and all $\lambda\in\cT_\eta$ we have $\langle\Re\lambda, \alpha^\vee\rangle\not\in \Z\smallsetminus\{0\}$. We fix such a $0<\eta<1$ from now on (we will make our choice more precise later). Note that $\cT_\eta$ is $W$-invariant by construction. We may extend the definition of $\Phi_\lambda(X)$ in \eqref{Phi-lambda-def} to $\lambda\in \cT_\eta$ regular and $X\in\af$. Indeed, the $\mathbf{c}$-function is non-zero on  $\af^*_\C$ and does not have any poles for regular $\lambda\in \cT_\eta$ by our choice of $\eta$.

The relevance of $\Phi_\lambda(X)$ is apparent from the next result, due to Trombi and Varadarajan \cite[Theorem 2.11.2]{TV}, providing the full asymptotic expansion of the spherical function. We have taken the exact statement from \cite[{Theorem 5.9.4}]{GaVa}. This extends to all $X\in\af$ the asymptotic expansion proved by Harish-Chandra and Gangolli for conical subsets of regular elements.

\begin{proposition}\label{prop:GV2}
Let $\Omega\subseteq i\af^*$ be a compact set and $c>0$. Then there exists $C>0$ and $m\ge 0$ such that for all $H\in\overline{\af^+}$ with $\beta_{\Delta_0}(H)\ge c$, where
\[
\beta_{\Delta_0}(H)=\min_{\alpha\in \Delta\smallsetminus \Delta_0} \alpha(H),
\]
and for every regular $\lambda\in \Omega$, we have
 \[
  \left|f_\lambda(H) - \Phi_\lambda(H) \right|
  \le C (1+\|H\|_2)^m e^{-2\beta_{\Delta_0}(H)}.
 \]
\end{proposition}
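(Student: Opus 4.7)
\emph{Proof plan.} The strategy is to derive this from Harish-Chandra's classical series expansion for the spherical function and to regroup the terms according to the Levi subgroup $M$ corresponding to $\Delta_0$.

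Recall that for regular $\lambda \in \af^*_\C$ and $H$ in the interior of the positive Weyl chamber one has
\[
\varphi_\lambda(e^H) = \sum_{w \in W} \mathbf{c}(w\lambda)\, e^{(w\lambda - \rho)(H)} \sum_{\mu \in 2Q^+} \Gamma_\mu(w\lambda) e^{-\mu(H)},
\]
where $Q^+ = \bigoplus_{\alpha \in \Delta} \Z_{\ge 0}\,\alpha$ and the coefficients $\Gamma_\mu(\lambda)$ are rational functions of $\lambda$ determined by Harish-Chandra's recursion. Gangolli's estimate gives polynomial bounds $|\Gamma_\mu(\lambda)| \ll (1+\|\mu\|_2)^D$ for some $D$, uniformly on compacta of regular $\lambda$. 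Since $\Omega \subset i\af^*$ is compact and consists of regular points, the denominators in the recursion (which are of the form $\langle \mu,\mu\rangle - 2\langle w\lambda,\mu\rangle$) stay bounded away from zero, giving uniform control on $\Gamma_\mu(w\lambda)$ for $\lambda \in \Omega$.

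Next I would use the direct sum decomposition $Q^+ = Q^+_{\Delta_0}\oplus Q^+_{\Delta\setminus\Delta_0}$ and write each $\mu \in 2Q^+$ uniquely as $\mu = \mu_0 + \nu$ with $\mu_0 \in 2Q^+_{\Delta_0}$ and $\nu \in 2Q^+_{\Delta\setminus\Delta_0}$. The key structural identity — a special case of the transitivity of the Harish-Chandra expansion with respect to Levi subgroups — is that the $\nu = 0$ contribution to $f_\lambda(H)$ equals exactly $\Phi_\lambda(H)$ as defined in \eqref{Phi-lambda-def}. The reason is twofold: first, Harish-Chandra's recursion restricted to $\mu_0 \in 2Q^+_{\Delta_0}$ coincides with the analogous recursion on $M$, so $\Gamma_{\mu_0}(\lambda) = \Gamma^M_{\mu_0}(\lambda)$; second, the factorization $\mathbf{c}(\lambda) = \mathbf{c}^M(\lambda)\, \mathbf{c}_M(\lambda)$ of the $\mathbf{c}$-function together with the partition of $W$ into $W^M$-cosets reassembles the $\mu_0$-sums on $G$ into the spherical function $\varphi^M$ on $M$, producing the factors $\mathbf{c}(w\lambda)/\mathbf{c}^M(w\lambda)\cdot f^M_{w\lambda}(H)$ summed over $w \in W^M\backslash W$.

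It then remains to estimate the tail $\nu \neq 0$. For each such $\nu \in 2Q^+_{\Delta\setminus\Delta_0}$, at least one simple root $\alpha \in \Delta\setminus\Delta_0$ appears with positive integer coefficient, and $\alpha(H) \ge \beta_{\Delta_0}(H)$, so $\nu(H) \ge 2\beta_{\Delta_0}(H)$. Factoring $e^{-2\beta_{\Delta_0}(H)}$ out of each remainder term and invoking the polynomial bounds on $\Gamma_{\mu_0+\nu}$ reduces matters to showing that
\[
\sum_{\nu \neq 0}\sum_{\mu_0} (1+\|\mu_0+\nu\|_2)^D e^{-\mu_0(H)} e^{-(\nu(H)-2\beta_{\Delta_0}(H))}
\]
is $O((1+\|H\|_2)^m)$ uniformly for $H \in \overline{\af^+}$ with $\beta_{\Delta_0}(H) \ge c$. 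The main obstacle is precisely the boundary behavior here: roots $\alpha \in \Delta_0$ may vanish on $H$, so the naive term-by-term Gangolli convergence argument fails in those directions. The resolution is to keep the $\mu_0$-summation intact and identify it with the analytic (everywhere convergent) expression $f^M_{\lambda - \nu}(H)/(\text{a shift factor})$ coming from the spherical function on $M$, which is smooth in $H \in \af$. Bounding this $M$-spherical piece uniformly in $\lambda \in \Omega$ and $\nu$ (with polynomial loss in $\|\nu\|_2$) via the integral formula $\varphi^M_\lambda(e^H) = \int_{K\cap M} e^{(\lambda+\rho^M)(H_0(km))}dm$ completes the estimate and yields the polynomial factor $(1+\|H\|_2)^m$.
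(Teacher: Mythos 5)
The paper does not prove Proposition \ref{prop:GV2} itself; it cites Trombi--Varadarajan \cite[Theorem 2.11.2]{TV} via \cite[Theorem 5.9.4]{GaVa}, and in fact notes explicitly that the whole point of that theorem is to extend the na\"ive Harish-Chandra--Gangolli expansion argument (which works on conical sets bounded away from \emph{all} walls) to the boundary walls in $\Delta_0$. Your sketch correctly reconstructs the ``easy'' part of that theorem: the Harish--Chandra series; the Gangolli estimate $|\Gamma_\mu(\lambda)|\ll (1+\|\mu\|_2)^D$ uniformly for $\lambda\in\Omega\subset i\af^*$ (the denominators $\langle\mu,\mu\rangle-2\langle\mu,\lambda\rangle$ have real part $\langle\mu,\mu\rangle>0$, so there is no pole issue on $i\af^*$); the decomposition $\mu=\mu_0+\nu$; the identity $\Gamma_{\mu_0}(\lambda)=\Gamma^M_{\mu_0}(\lambda)$; the Gindikin--Karpelevi\v{c} factorization and $W^M$-invariance of $\mathbf{c}/\mathbf{c}^M$, and hence the identification of the $\nu=0$ block with $\Phi_\lambda$; and the factor $e^{-\nu(H)}\le e^{-2\beta_{\Delta_0}(H)}$ for $\nu\neq 0$. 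All of that is right.

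The gap is exactly at the step you flag as ``the main obstacle.'' Your proposed resolution --- ``keep the $\mu_0$-summation intact and identify it with $f^M_{\lambda-\nu}(H)$ up to a shift factor'' --- is not correct. For fixed $\nu\neq 0$ the coefficients $\Gamma_{\mu_0+\nu}(\lambda)$ do \emph{not} satisfy the pure $M$-recursion in $\mu_0$: the Gangolli recursion expresses $\Gamma_{\mu_0+\nu}$ in terms of $\Gamma_{\mu_0+\nu-2k\alpha}$ over \emph{all} $\alpha\in\Phi^+$, and the $\alpha\notin\Phi_0^+$ terms change $\nu$, so the system is only block-triangular in $\nu$, with lower-$\nu$ forcing terms. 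Consequently $\sum_{\mu_0}\Gamma_{\mu_0+\nu}(w\lambda)e^{-\mu_0(H)}$ is not a spherical function of $M$ evaluated at any shifted parameter, and the claimed smoothness-on-the-walls does not follow from that identification. Moreover there is a structural mismatch: $f^M_{\lambda-\nu}$ carries its own sum over $W^M$, which is not present in your fixed-$w$ inner sum. What Trombi--Varadarajan actually do to get uniformity across the $\Delta_0$-walls is genuinely harder: they exhibit $f_\lambda-\Phi_\lambda$ as a solution of a first-order (nilpotent-perturbed) linear system along the $\af_M$-direction and integrate it; the polynomial loss $(1+\|H\|_2)^m$ comes from the nilpotent part, not from a term-by-term Gangolli bound. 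In short, your plan reproduces the formal regrouping behind the theorem but does not yet prove the crucial uniform estimate near the $\Delta_0$-walls, which is precisely the nontrivial content of \cite[Theorem 2.11.2]{TV} that the paper outsources. If you want a self-contained argument, you would need to replace the last paragraph with either the ODE/constant-term argument of \cite{TV}/\cite[Chapter 5]{GaVa}, or an explicit majorant for the triangular-in-$\nu$ recursion that remains summable when some $\alpha\in\Delta_0$ vanishes on $H$.
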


We now describe the key periodicity property of $\Phi_\lambda(X)$. For this we need the following definition.

\begin{definition}\label{def-rational}
We say that $\lambda\in i\af^*$ is \textbf{rational} if $i\langle w\lambda, X^0\rangle\in\Q$ for all $w\in W$. 
\end{definition}

If $\lambda$ is rational, we can choose $\tau_1\in \R_{>0}$ such that $\tau_1 \langle w\lambda, X^0\rangle\in 2i\pi\Z$ for all $w\in W$. Recall the definition of $\cC$ from Lemma \ref{mu-basis}.

\begin{lemma}\label{lem:periodic}
Let $\lambda\in i\af^*$ be rational and let $\tau_1$ be defined as above. Set $\tau_n = n\tau_1$ for $n\in \N$. Then for every $Y\in\cC$, $t\ge0$ and $n\in \N$ we have 
\[
\Phi_\lambda(Y + (t+\tau_n)X^0)
= \Phi_\lambda(Y+ tX^0).
\]
\end{lemma}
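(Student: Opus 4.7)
The plan is to reduce the periodicity of $\Phi_\lambda$ in the $X^0$-direction to the fact that each summand $f^M_{w\lambda}(Y + tX^0)$ is, as a function of $t$, a pure exponential with frequency $\langle w\lambda, X^0\rangle$. Once this is established, the choice of $\tau_1$ makes each exponential factor $e^{\tau_n \langle w\lambda, X^0\rangle}$ equal to $1$, and the two sums defining $\Phi_\lambda(Y+tX^0)$ and $\Phi_\lambda(Y+(t+\tau_n)X^0)$ coincide term by term.

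The key structural input is that $M$ was defined as the centralizer of $X^0$ in $G$ and $\Delta_0$ chosen accordingly, so that $X^0 \in \af_M$. From this I extract two facts I will use repeatedly. First, every root $\alpha \in \Phi_0^+$ vanishes on $\af_M$, hence $\langle \rho^M, X^0\rangle = 0$. Second, $\af_M$ sits inside the Lie algebra of the (split) center of $M$, so $e^{tX^0}$ commutes with every element of $M$; in particular it commutes with $K \cap M$.

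The main computation is then to evaluate $\varphi^M_\mu(e^{Y+tX^0})$ for $Y \in \af$ and $\mu = w\lambda$. Writing $e^{Y+tX^0} = e^{tX^0}e^Y$ and using centrality of $e^{tX^0}$ in $M$, I get, for any $k \in K \cap M$, the identity $k e^{tX^0} e^Y = e^{tX^0} k e^Y$. If $k e^Y = n_0 a_0 k_0$ is the Iwasawa decomposition in $M$, then $k e^{tX^0} e^Y = n_0(e^{tX^0} a_0) k_0$, whence $H_0^M(k e^{Y+tX^0}) = tX^0 + H_0^M(k e^Y)$. Integrating over $K \cap M$ yields
\[
\varphi^M_\mu(e^{Y+tX^0}) = e^{t\langle \mu + \rho^M, X^0\rangle}\, \varphi^M_\mu(e^Y).
\]
Combining this with $\langle \rho^M, Y + tX^0\rangle = \langle \rho^M, Y\rangle$ (from $\rho^M(X^0)=0$) gives the clean identity
\[
f^M_{w\lambda}(Y + tX^0) = e^{t\langle w\lambda, X^0\rangle}\, f^M_{w\lambda}(Y).
\]

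Plugging this into the definition \eqref{Phi-lambda-def} of $\Phi_\lambda$ concludes the proof: each term of $\Phi_\lambda(Y+(t+\tau_n)X^0)$ differs from the corresponding term of $\Phi_\lambda(Y+tX^0)$ by the factor $e^{\tau_n \langle w\lambda, X^0\rangle}$, and by the choice of $\tau_1$ (together with $\tau_n = n\tau_1$) we have $\tau_n \langle w\lambda, X^0\rangle \in 2\pi i \Z$ for every $w \in W$, so each such factor equals $1$. I do not expect any real obstacle here; the only point requiring a brief justification is the centrality of $e^{tX^0}$ in $M$, which is immediate from the definition of $M$, and the vanishing $\rho^M(X^0)=0$, which is immediate from $X^0 \in \af_M$. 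Note also that the hypothesis $Y \in \mathscr{C}$ plays no role in the argument—the identity holds for arbitrary $Y \in \af$—but is preserved in the statement for compatibility with its intended use.
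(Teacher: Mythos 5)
Your proof is correct and follows essentially the same route as the paper: both reduce to the covariance identity $f^M_{w\lambda}(Y+tX^0)=e^{t\langle w\lambda,X^0\rangle}f^M_{w\lambda}(Y)$, obtained from $\langle\rho^M,X^0\rangle=0$ and the translation $H_0(ke^{Y+tX^0})=tX^0+H_0(ke^Y)$ for $k\in K\cap M$, and then invoke the choice of $\tau_1$ to kill each exponential factor. You spell out the Iwasawa-decomposition justification of that translation a bit more fully than the paper does, but the argument is the same.
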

\begin{proof}
By definition of $\tau_1$ it follows that $e^{\langle \lambda, (t+\tau_n)X^0\rangle} =e^{\langle \lambda, tX^0\rangle}$ for every $n\in\N$. Further note that, since $\langle\rho^M,X^0\rangle=0$ and $H_0(ke^{Y+tX^0})=tX_0+H(ke^Y)$ for $k\in K\cap M$, we have
\[
\varphi_\lambda^M(e^{Y+tX^0})=e^{\langle \rho^M+\lambda,tX^0\rangle}\varphi_\lambda^M(e^Y)=e^{\langle \lambda,tX^0\rangle}\varphi_\lambda^M(e^Y).
\]
Thus
\begin{equation}\label{fM-periodicity}
f^M_\lambda(Y+ tX^0) = e^{\langle \lambda, tX^0\rangle} f^M_\lambda(Y).
\end{equation}
Applying these two formulas for every $w\lambda$, $w\in W^M\backslash W$, yields the assertion.
\end{proof}

\subsection{Convergence of an integral}
To take advantage of Lemma \ref{lem:periodic}, we shall need to replace the integration domain $\mathscr{P}_t^+$ in \eqref{sph-inv2} by the translated cone $\mathscr{C}$. Note that the Jacobian factor \eqref{first-J(X)} can be alternatively written as
\begin{equation}\label{J(H)}
J(X)=2^{-|\Phi^+|}\sum_{w\in W} \sigma(w) e^{2\langle w\rho, X\rangle} \qquad (X\in\overline{\af^+}),
\end{equation}
where $\sigma(w)\in\{\pm 1\}$ denotes the signature of $w$.
With this in mind, for $t>0$ and $\lambda\in \cT_\eta$, we put
\[
I(t,\lambda)=\int_{\cC^0} \Phi_\lambda(Y + tX^0) e^{\langle \rho, Y\rangle}\, dY,
\]
whenever it converges.

In this paragraph we address this question of convergence. From the defining expression for $\Phi_\lambda$ in \eqref{Phi-lambda-def}, it will be enough to understand the convergence of the integral
\[
J(t,\lambda)
=\int_{\cC^0}  f^M_{\lambda}(Y+ tX^0) e^{\langle \rho, Y\rangle}\, dY.
\]

\begin{lemma}\label{lem:wInt:nonzero}
There is $0<\eta<1$ such that, for every $\lambda\in \cT_\eta$ and $t\ge0$, the integral defining $J(t,\lambda)$ converges absolutely. 
\end{lemma}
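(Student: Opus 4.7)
The plan is to first absorb the $t$-dependence trivially via the centrality of $X^0$ in $\af_M$, and then reduce the convergence question to a combination of a standard spherical-function bound on the Levi $M$ and the extremal characterization of $X^0$ proved in Appendix \ref{appendix:X0}.

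First I would factor out $t$. Since $X^0\in\af_M$, the argument reproduced in the proof of Lemma \ref{lem:periodic} (see \eqref{fM-periodicity}) gives
\[
f_\lambda^M(Y+tX^0)=e^{\langle\lambda,tX^0\rangle}f_\lambda^M(Y),\qquad Y\in\af,\;t\ge0.
\]
Taking absolute values, the statement reduces to showing that, for $\eta$ small and every $\lambda\in\cT_\eta$,
\[
\int_{\cC^0}|f_\lambda^M(Y)|\,e^{\langle\rho,Y\rangle}\,dY<\infty.
\]

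Next I would bound $|f_\lambda^M(Y)|$ uniformly in $\lambda\in\cT_\eta$. Decomposing $Y=Y_M+Y^M$ orthogonally with $Y_M\in\af_M$ and $Y^M\in\af^M$, the centrality of $e^{Y_M}$ in $M$ factors the spherical function as $\varphi_\lambda^M(e^Y)=e^{\langle\lambda,Y_M\rangle}\varphi_\lambda^M(e^{Y^M})$, and the bi-$(K\cap M)$-invariance of $\varphi_\lambda^M$ lets me replace $Y^M$ by its dominant $W^M$-conjugate $Y^M_+$. The standard Harish-Chandra decay estimate on the semisimple factor of $M$ (an application of Proposition \ref{prop:GV2} with $M$ in place of $G$) then yields, for $\lambda\in\cT_\eta$ and some constants $N,C$ depending only on the root data,
\[
|\varphi_\lambda^M(e^{Y^M_+})|\ll (1+\|Y^M\|_2)^N e^{-\rho^M(Y^M_+)+C\eta\|Y^M\|_2}.
\]
Combined with the dominant-weight inequality $\rho^M(Y^M)\le\rho^M(Y^M_+)$ and the splitting $\langle\rho,Y\rangle=\rho_M(Y_M)+\rho^M(Y^M)$, this produces
\[
|f_\lambda^M(Y)|\,e^{\langle\rho,Y\rangle}\ll (1+\|Y\|_2)^N\exp\!\bigl(\langle\rho,Y\rangle+C'\eta\|Y\|_2\bigr).
\]

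Finally, I would invoke the geometry of the cone. By the characterization of $X^0$ in Appendix \ref{appendix:X0} (Lemma \ref{mu-basis}) as the unique maximizer of $\langle\rho,\cdot\rangle$ on $\cC$, the functional $\rho$ lies in the interior of the polar cone of $\cC^0$, so there exists $\kappa>0$ with $\langle\rho,Y\rangle\le -\kappa\|Y\|_2$ for every $Y\in\cC^0$. Choosing $\eta<\kappa/(2C')$ makes the integrand above at most $(1+\|Y\|_2)^N e^{-(\kappa/2)\|Y\|_2}$, which is integrable on the polyhedral cone $\cC^0$.

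The main obstacle is obtaining the Harish-Chandra decay bound for $|\varphi_\lambda^M|$ in the complex range $\lambda\in\cT_\eta$, where the $e^{-\rho^M}$ decay must be separated from the $e^{\Re\lambda}$ growth and the bound must hold for $Y^M$ in \emph{any} $W^M$-chamber; this is what forces the restriction to small $\eta$. Once $\eta$ is chosen small enough relative to the geometric gap $\kappa$, the exponent $\rho+O(\eta)$ remains strictly inside the polar cone of $\cC^0$ and the integral converges absolutely.
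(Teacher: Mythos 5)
Your proposal follows the same structural outline as the paper's proof: factor out the $t$-dependence via \eqref{fM-periodicity}, decompose $Y=Y_M+Y^M$, bound the $Y_M$ and $Y^M$ contributions separately, and close the argument using the strict negativity of $\langle\rho,\cdot\rangle$ on the cone $\cC^0$. You also arrive at the same kind of final bound (polynomial times a decaying exponential on a polyhedral cone). Two points are worth flagging.

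First, the reference to Proposition~\ref{prop:GV2} for the spherical-function bound on $M$ is a misattribution. Proposition~\ref{prop:GV2} is the Trombi--Varadarajan asymptotic expansion and is stated only for $\lambda$ ranging over a compact subset of $i\af^*$; it controls the \emph{error} $f_\lambda-\Phi_\lambda$, not the size of $\varphi_\lambda^M$ itself, and it does not cover the complex parameters $\lambda\in\cT_\eta$ with $\Re\lambda\neq 0$ that you need here. The estimate you actually want,
\[
|\varphi_\lambda^M(e^{H})|\le \varphi_{\Re\lambda}^M(e^{H_+})\le \max_{w\in W^M}e^{\langle w\Re\lambda,H_+\rangle}\varphi_0^M(e^{H_+})\ll \max_{w\in W^M}e^{\langle w\Re\lambda,H_+\rangle}e^{-\langle\rho^M,H_+\rangle}(1+\|H_+\|_2)^{|\Phi^+|},
\]
is the classical Harish-Chandra bound, which the paper quotes from Jorgenson--Lang (\cite{JoLa}, Ch.~X, \S2, Prop.~2.2 and Thm.~2.8). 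Once you replace the citation, your chain of inequalities goes through.

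Second, your derivation of the key geometric input---that there is $\kappa>0$ with $\langle\rho,Y\rangle\le-\kappa\|Y\|_2$ on $\cC^0$---is attributed to Lemma~\ref{mu-basis} ``characterizing $X^0$ as the unique maximizer of $\langle\rho,\cdot\rangle$ on $\cC$''. That is not literally what Lemma~\ref{mu-basis} says: the maximizer characterization \eqref{defnX0} is over $\cP^+=\cC\cap\overline{\af^+}$, not $\cC$. The clean statement you want is Remark~\ref{rem:positive:rho}: $\rho$ is a positive integer linear combination of the $\mu_i$, hence lies in the interior of the polar cone of $\cC^0$, giving the desired $\kappa$. With these two citations corrected, your argument is valid. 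In fact, your use of the plain Cauchy--Schwarz bound $|\langle\Re\lambda,Y_M\rangle|\le\eta\|\rho\|_2\|Y\|_2$, together with Remark~\ref{rem:positive:rho}, is a modest simplification: the paper instead routes this step through the finer angle estimate of Proposition~\ref{claims}, whose proof relies on the combinatorial analysis in Appendix~\ref{sec:cones}. For the purposes of \emph{this} lemma alone your coarser bound suffices, at the price of taking $\eta$ small relative to the explicit gap $\kappa$.
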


\begin{proof}
We first observe that it suffices to consider $t=0$. Indeed, from \eqref{fM-periodicity} it follows that
\begin{equation}\label{eq:covariance:I}
J(t,\lambda) = e^{ t \langle \lambda, X^0\rangle} J(\lambda).
\end{equation}
where we have put $J(\lambda)=J(0,\lambda)$.

Let $Y\in \cC^0$ and write $Y= Y_M+Y^M$ with $Y_M\in \af_M$ and $Y^M\in \af^M$. Then
\begin{equation}\label{eq:fM-bound}
f^M_{\lambda}(Y)
= e^{\langle \rho^M, Y\rangle} \varphi_\lambda^M(e^Y)
= e^{\langle\lambda, Y_M\rangle} e^{\langle \rho^M, Y^M\rangle} \varphi_\lambda^M(e^{Y^M}).
\end{equation}
Proposition \ref{claims} then immediately yields the bound $e^{\langle\lambda, Y_M\rangle}\leq e^{-\eta C\langle\rho,Y\rangle}$ on the first factor on the right-hand side of \eqref{eq:fM-bound}. 

To bound the remaining two factors, we first let $Y^M_+$ be a point in the $W^M$-orbit of $Y^M$ lying in $\overline{\af^{M,+}}$. Then by Proposition 2.2 and Theorem 2.8 of \cite[Ch. X, \S 2]{JoLa} we have 
\begin{align*}
\big|\varphi_\lambda^M(e^{Y^M})\big| 
\le \varphi_{\Re \lambda}^M(e^{Y^M})
&= \varphi_{\Re \lambda}^M(e^{Y^M_+})\\
&  \le \max_{w\in W^M} e^{\langle w{\rm Re}\, \lambda, Y^M_+\rangle} \varphi_0^M(e^{Y^M_+}) \\
& \ll  \max_{w\in W^M} e^{\langle w{\rm Re}\, \lambda, Y^M_+\rangle} e^{-\langle \rho^M, Y^M_+\rangle} (1+\|Y^M_+\|_2)^{|\Phi^+|}.
\end{align*}
Now $\|Y^M_+\|_2 = \|Y^M\|_2$ and $\langle\rho^M, Y^M\rangle\le \langle \rho^M, Y^M_+\rangle$. Thus there is a $c>0$ such that
\[
 \big|e^{\langle \rho^M, Y^M\rangle} \varphi_\lambda^M(e^{Y^M})\big| 
 \leq c \max_{w\in W^M} e^{\langle w\Re \lambda, Y^M_+\rangle} (1+\|Y^M\|_2)^{|\Phi^+|}.
\]
Using the $W$-invariance of $\langle\cdot,\cdot\rangle$ and $Y^M= Y-Y_M$, we obtain
\[
 \max_{w\in W^M} \langle w\Re \lambda, Y^M_+\rangle=\max_{w\in W^M} \langle w\Re \lambda, Y^M\rangle \leq \max_{w\in W} |\langle w \Re \lambda, Y\rangle|+\max_{w\in W} |\langle w \Re \lambda, Y_M\rangle|.
\]
Note that $w\Re\lambda=\Re w\lambda$ and $w\lambda\in\mathcal{T}_\eta$. Using Remark \ref{rem:positive:rho}, we have $ |\langle \Re \lambda, Y\rangle|\ll -\eta\langle \rho, Y\rangle$ for all $\lambda\in \cT_\eta$ and $Y\in\mathscr{C}^0$. From this, and Proposition \ref{claims}, we deduce that there is a constant $C'>0$, depending only on $d$, such that 
\[
\max_{w\in W^M} \langle w\Re \lambda, Y^M_+\rangle 
 \le - \eta C'\langle\rho, Y\rangle.
\]
Similarly, we can find $c>0$ such that $\|Y^M\|_2 \le c \|Y\|_2$. Hence if $\eta>0$ is sufficiently small, then it follows from \eqref{eq:fM-bound} and our estimates above that for some $c'>0$,
\[
\left|f^M_{\lambda}(Y)\right|
\le c' e^{-\frac{1}{2}\langle \rho, Y\rangle}(1+\|Y\|_2)^{|\Phi^+|}.
\]
We therefore obtain,
\[
\int_{\cC^0}  \left|f^M_{\lambda}(Y) e^{\langle \rho, Y\rangle}\right|\, dY
\le \int_{\cC^0}  e^{\langle \frac{1}{2} \rho, Y\rangle}  (1+\|Y\|_2)^{|\Phi^+|}\, dY.
\]
Using Remark \ref{rem:positive:rho} we see that this last integral is finite. Thus the integral defining $J(\lambda)$, and hence $J(t,\lambda)$, converges absolutely.
\end{proof}

It follows that for $\lambda\in \cT_\eta$ we have
\begin{equation}\label{lin-comb-chars}
I(t,\lambda) = \sum_{w\in W^M\backslash W} \frac{\mathbf{c}(w\lambda)}{\mathbf{c}^M(w\lambda)} J(w\lambda) e^{t \langle w\lambda , X^0\rangle},
\end{equation}
where, we recall, $J(\lambda)=J(0,\lambda)$.
\subsection{Main term replacement}

Having established that $I(t,\lambda)$ is well-defined, we now show, using Proposition \ref{prop:GV2}, that $h_t(\lambda)$ can be approximated by $I(t,\lambda)$. 

\begin{proposition}\label{replacement}
There are $c, C>0$ such that, for regular $\lambda\in i\af^*$, 
\[
h_t(\lambda)=C I(t,\lambda)+O_\lambda(e^{-ct}).
\]
\end{proposition}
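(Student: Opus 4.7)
The plan is to invoke the Harish-Chandra asymptotic expansion of Proposition \ref{prop:GV2} to replace $f_\lambda$ by its main term $\Phi_\lambda$, and to use the geometric description of $\cP^+$ from Lemma \ref{mu-basis} to convert the integral over $\cP_t^+$ into (an exhaustion of) an integral over $\CmC^0$. Starting from \eqref{sph-inv2}, I will substitute $X = Y + tX^0$, mapping $\cP_t^+$ onto $tX^0 + D_t$ with $D_t := t(\cP^+ - X^0) \subseteq \CmC^0$. Since $\alpha(X^0) > 0$ for every $\alpha\in \Delta\setminus\Delta_0$, the quantity $\beta_{\Delta_0}(Y+tX^0)$ is bounded below by a positive constant times $t$ uniformly on $D_t$, so Proposition \ref{prop:GV2} yields $f_\lambda(e^{Y+tX^0}) = \Phi_\lambda(Y+tX^0) + O\!\left((1 + \|Y\| + t)^m e^{-2ct}\right)$, contributing $O_\lambda(e^{-ct})$ to the integrated error.

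Next I will analyze $J(Y+tX^0) e^{-\langle\rho, Y+tX^0\rangle}$ via the factorization $J = J_M \cdot J_{G/M}$, where $J_M(Y) := \prod_{\alpha\in \Phi_0^+}\sinh\alpha(Y)$ is $t$-independent (because $\alpha(X^0)=0$ for $\alpha\in\Phi_0^+$) and $J_{G/M}(Y+tX^0) := \prod_{\alpha\in \Phi^+\setminus\Phi_0^+}\sinh\alpha(Y+tX^0)$. For the latter, $\alpha(X^0) > 0$ gives $\sinh\alpha(Y+tX^0) = \tfrac{1}{2} e^{\alpha(Y+tX^0)}(1 + O(e^{-ct}))$, hence $J_{G/M}(Y+tX^0) = 2^{-|\Phi^+\setminus\Phi_0^+|}e^{\langle 2\rho^{G/M}, Y+tX^0\rangle}(1 + O(e^{-ct}))$. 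Combining with $e^{-\langle\rho, Y+tX^0\rangle}$, using $\langle\rho^M, X^0\rangle = 0$ to extract a factor $e^{2t\langle\rho, X^0\rangle}$, and cancelling this against $\sqrt{m_G(E_t)}^{-1}\sim c_2^{-1/2}e^{-t\langle\rho, X^0\rangle}$ from Corollary \ref{rem:volume}, one obtains
\[
h_t(\lambda) = C_0\int_{D_t}\Phi_\lambda(Y+tX^0)\, J_M(Y)\, e^{\langle\rho^{G/M}-\rho^M, Y\rangle}\,dY + O_\lambda(e^{-ct})
\]
for an explicit constant $C_0$.

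Finally, to match this integral with $CI(t,\lambda)$, I will expand $J_M$ via the Weyl denominator formula for $M$, namely $J_M(Y) = 2^{-|\Phi_0^+|}\sum_{w\in W^M}\sigma(w)e^{2\langle w\rho^M, Y\rangle}$. The term $w=1$ combines with $e^{\langle\rho^{G/M}-\rho^M, Y\rangle}$ to produce exactly $2^{-|\Phi_0^+|}e^{\langle\rho, Y\rangle}$, reproducing the integrand of $I(t,\lambda)$. Extending the integration from $D_t$ to $\CmC^0$ introduces only a tail error $O(e^{-ct})$, using the estimate $\langle\rho, Y\rangle < -c'\|Y\|$ on $\CmC^0\setminus\{0\}$ (Remark \ref{rem:positive:rho}) combined with the fact that $D_t$ contains a ball of radius $\asymp t$ in $\CmC^0$. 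The main obstacle will be controlling the contributions of $w\in W^M\setminus\{1\}$: after inserting the expansion \eqref{lin-comb-chars} of $\Phi_\lambda$ and the covariance \eqref{fM-periodicity}, each such term becomes a finite linear combination of oscillating exponentials $e^{t\langle w'\lambda, X^0\rangle}$ multiplied by $t$-independent convergent integrals $\int_{\CmC^0}f^M_{w'\lambda}(Y)e^{\langle(2w-1)\rho^M+\rho^{G/M}, Y\rangle}\,dY$. I expect these to collectively give $O_\lambda(e^{-ct})$ either by a signature-cancellation argument using $\sigma(w)$ together with the $W^M$-equivariance of the $M$-spherical functions $\varphi^M_{w'\lambda}$, or via a contour-shift in $\lambda$ exploiting the poles of $\mathbf{c}^M$ appearing in the denominator of \eqref{Phi-lambda-def}.
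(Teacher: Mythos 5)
Your overall plan (replace $f_\lambda$ by $\Phi_\lambda$ via Proposition \ref{prop:GV2}, change variables to center on $tX^0$, extend the integral to the cone $\cC^0$) mirrors the paper's strategy, and your final steps --- the identification of the $e^{\langle\rho,Y\rangle}$ integrand and the tail estimate using Remark \ref{rem:positive:rho} --- agree with what the paper does. However, there is a genuine gap at the very first substantive step.

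You assert that ``$\beta_{\Delta_0}(Y+tX^0)$ is bounded below by a positive constant times $t$ uniformly on $D_t$'' where $D_t=t(\cP^+-X^0)$. This is false. Indeed $Y+tX^0=tX$ with $X$ ranging over \emph{all} of $\cP^+$, so $\beta_{\Delta_0}(Y+tX^0)=t\,\beta_{\Delta_0}(X)$ with $X\in\cP^+$, and $\beta_{\Delta_0}$ vanishes on all of $\cP^+\cap\partial\af^+$ (for instance $0\in\cP^+$ gives $Y=-tX^0\in D_t$ with $\beta_{\Delta_0}(Y+tX^0)=0$; or, for $d$ even, $X=(1,0,\ldots,0,-1)$ gives $\alpha_{d/2}(X)=0$ while $\|X\|\asymp 1$). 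Consequently Proposition \ref{prop:GV2} does not deliver the uniform error $O\bigl((1+\|Y\|+t)^m e^{-2ct}\bigr)$ on $D_t$ that you claim --- in fact it does not even apply on the part of $D_t$ where $\beta_{\Delta_0}(Y+tX^0)<c$. The paper avoids this by \emph{first} restricting the integral to the thin sub-polytope $t\cE^+$ of Lemma \ref{lem:regular:polytope} (chosen near $tX^0$), using part (i) of that lemma to absorb the discarded part of $\cP^+_t$ into an $O(e^{-c_2 t})$ error, and only \emph{then} applying part (ii) of that lemma together with Proposition \ref{prop:GV2} --- on $t\cE^+$, one genuinely has $\beta_{\Delta_0}\geq ct$. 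Your proposal is missing this truncation, and without it your second displayed formula does not follow.

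A secondary point: your treatment of the Jacobian differs from the paper's. You factor $J=J_M\cdot J_{G/M}$, keep $J_M$ exact, expand it by the Weyl denominator formula for $M$, and are left with the contributions from $w\in W^M\setminus\{1\}$, which you describe as ``the main obstacle'' and for which you only sketch two speculative strategies. The paper sidesteps this entirely by replacing $J(X)e^{-\langle\rho,X\rangle}$ by the single exponential $e^{\langle\rho,X\rangle}$ at the very start (defining $G(t,\lambda)$), with an error it asserts is $O(e^{-c_1 t})$ on the strength of \eqref{J(H)} and Corollary \ref{rem:volume}; the alternating Weyl sum and the normalization by $\sqrt{m_G(E_t)}$ are used jointly, not a term-by-term comparison against $e^{2\langle\rho,X\rangle}$. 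Whether or not one finds the paper's one-line justification of that step fully satisfying, you should not leave the $W^M$ contributions as an unresolved ``obstacle'' --- either establish the cancellation you conjecture, or replace this part of the argument by the paper's direct comparison with $m_G(E_t)$.

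Before anything else, fix the $\beta_{\Delta_0}$ error: insert the truncation to $t\cE^+$ via Lemma \ref{lem:regular:polytope} before invoking Proposition \ref{prop:GV2}.
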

Before we prove this proposition, we establish the following lemma:

\begin{lemma}\label{lem:regular:polytope}
Let $0<\eta<1$ and define $\cE= X^0 -\sum_{i=1}^r [0,\eta]\beta_i^\vee = \sum_{i=1}^r [1-\eta, 1]\beta_i^\vee$ and $\cE^+ = \cE\cap \cP^+$. 
Then if $\eta$ is sufficiently small, we have:
\begin{enumerate}[label=(\roman{*})]
 \item For every $V$ in the closure $\overline{\cP^+\smallsetminus \cE^+}$ of  $\cP^+\smallsetminus \cE^+$ we have
 \begin{equation}\label{eq:bounded:from:X0}
  \langle \rho, V\rangle \le \langle\rho, X^0\rangle -\delta
 \end{equation}
for some $\delta=\delta(\eta)>0$ uniform in all $V$.

\item  Let $\Delta^M\subseteq \Delta$ denote the subset of simple roots in $M$. Then there exists $c=c(\eta)>0$ such that for every $X\in \cE$ and $\alpha\in \Delta\smallsetminus\Delta^M$ we have $\alpha(X)\ge c$.
\end{enumerate}
\end{lemma}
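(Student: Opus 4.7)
The plan is to exploit the coordinate system on $\af$ provided by Lemma \ref{mu-basis}: since $\cP^+ = \cC \cap \overline{\af^+}$ with $\cC = X^0 + \cC^0$, and the $-\beta_i^\vee$ generate the cone $\cC^0$, every $V \in \cP^+$ admits a unique expansion $V = X^0 - \sum_{i=1}^r c_i(V)\beta_i^\vee$ with $c_i(V) \ge 0$. In these coordinates $\cE^+$ is cut out precisely by the additional constraints $c_i(V) \le \eta$, so $V \in \cP^+ \setminus \cE^+$ if and only if $c_j(V) > \eta$ for some $j$. With this setup, both parts reduce to straightforward linear-algebra bookkeeping.

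For part (i), I would first show that $\langle \rho, \beta_i^\vee \rangle > 0$ for every $i$. Indeed, $X^0 - t\beta_i^\vee \in \cP^+$ for all sufficiently small $t > 0$, while \eqref{defnX0} says $X^0$ is the \emph{unique} maximizer of $\langle \rho, \cdot\rangle$ on $\cP^+$; therefore $\langle \rho, X^0 - t\beta_i^\vee\rangle < \langle \rho, X^0\rangle$, forcing $\langle \rho, \beta_i^\vee\rangle > 0$. Once this positivity is established, for any $V = X^0 - \sum c_i \beta_i^\vee \in \cP^+ \setminus \cE^+$ some $c_j > \eta$, and since all terms in $\sum c_i \langle \rho, \beta_i^\vee \rangle$ are non-negative,
\[
\langle \rho, V\rangle = \langle \rho, X^0\rangle - \sum_{i=1}^r c_i \langle \rho, \beta_i^\vee\rangle \le \langle \rho, X^0\rangle - \eta \min_{1\le i \le r}\langle \rho, \beta_i^\vee\rangle.
\]
Setting $\delta = \eta \min_i \langle \rho, \beta_i^\vee\rangle > 0$ gives the bound on $\cP^+ \setminus \cE^+$, and it extends to the closure by continuity of $\langle \rho, \cdot\rangle$.

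For part (ii), the relevant observation is that $\Delta^M$ consists of precisely the simple roots vanishing at $X^0$ (since $M$ is the centralizer of $X^0$), so $\alpha(X^0) > 0$ for every $\alpha \in \Delta \setminus \Delta^M$ because $X^0 \in \overline{\af^+}$. Set $c_0 = \min_{\alpha \in \Delta \setminus \Delta^M} \alpha(X^0) > 0$ and $M_0 = \max_{\alpha \in \Delta,\, 1 \le i \le r} |\alpha(\beta_i^\vee)|$. For any $X = X^0 - \sum c_i \beta_i^\vee \in \cE$ with $c_i \in [0,\eta]$ and any $\alpha \in \Delta \setminus \Delta^M$,
\[
\alpha(X) = \alpha(X^0) - \sum_{i=1}^r c_i \alpha(\beta_i^\vee) \ge c_0 - \eta r M_0,
\]
which yields $\alpha(X) \ge c_0/2$ once $\eta < c_0/(2rM_0)$. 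Shrinking $\eta$ if necessary, the same $\eta$ serves both parts. The only non-routine ingredient is the strict positivity $\langle \rho, \beta_i^\vee\rangle > 0$, which rests on the uniqueness in \eqref{defnX0}; everything else is elementary convex geometry applied to the basis supplied by Lemma \ref{mu-basis}.
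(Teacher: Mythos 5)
Your overall strategy in part (i) -- expand $V$ in the basis $\{\beta_i^\vee\}$ supplied by Lemma~\ref{mu-basis} and get an explicit $\delta=\eta\min_i\langle\rho,\beta_i^\vee\rangle$ -- is sound, and part (ii) is essentially the paper's argument in quantitative form. However, your justification of the key positivity $\langle\rho,\beta_i^\vee\rangle>0$ has a genuine gap. You claim that $X^0-t\beta_i^\vee\in\cP^+$ for all sufficiently small $t>0$, and then invoke the uniqueness of the maximizer; but this inclusion can fail for $t>0$ because $X^0$ generally sits on a \emph{wall} of $\overline{\af^+}$, and the direction $-\beta_i^\vee$ may immediately leave the closed chamber. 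Concretely, take $d=4$, so $X^0=(1,1,-1,-1)$, $\mu_1=\varpi_1$, $\mu_2=\varpi_2-\varpi_1$, $\mu_3=\varpi_3$, and the dual basis is $\beta_1^\vee=(1,0,-1,0)$, $\beta_2^\vee=(0,1,-1,0)$, $\beta_3^\vee=(0,0,1,-1)$. Then $X^0-t\beta_1^\vee=(1-t,1,-1+t,-1)$ violates $X_1\ge X_2$ for every $t>0$, and likewise for $\beta_3^\vee$; your argument establishes $\langle\rho,\beta_i^\vee\rangle>0$ only for $i=2$ here, not for all $i$.

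The positivity you need is nevertheless true, and the correct source is Remark~\ref{rem:positive:rho}: $\rho$ is a positive integral combination of the $\mu_i$, so $\langle\rho,\beta_i^\vee\rangle=\rho(\beta_i^\vee)>0$ by duality. With that one substitution your proof closes. Note also that the paper's own proof of (i) sidesteps this issue entirely with a soft argument: $\overline{\cP^+\smallsetminus\cE^+}$ is compact, $X^0\notin\overline{\cP^+\smallsetminus\cE^+}$ (since $c_i(X^0)=0<\eta$ for all $i$), and $X^0$ is the unique maximizer of $\langle\rho,\cdot\rangle$ on $\cP^+$, so the continuous function $\langle\rho,\cdot\rangle$ attains a strictly smaller maximum on $\overline{\cP^+\smallsetminus\cE^+}$. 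Your version is more explicit (it gives $\delta$ in closed form), but needs the extra input from Remark~\ref{rem:positive:rho} that the compactness argument avoids.
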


\begin{proof}
Since $X^0$ is the unique point in $\cP^+$ at which the linear map $X\mapsto \langle\rho, X\rangle$ attains its maximum, and $\overline{\cP^+\smallsetminus\cE^+}$ is closed, it will suffice to show that every point in $\overline{\cP^+\smallsetminus\cE^+}$ is bounded away from $X^0$. But this is clear, since $\cP^+ =(X^0 -\sum_{i=1}^r [0,\infty)\beta_i^\vee)\cap \af^+$ by Lemma \ref{mu-basis}. Note that this argument holds for any $0<\eta<1$ with the resulting constant $\delta$ of course depending on $\eta$.

For the second part, we note that for every $\alpha\in \Delta\smallsetminus \Delta^M$ we have $\alpha(X^0) = 2$. Hence if $\eta$ is chosen sufficiently small, each such $\alpha$ stays bounded away from $0$ on $\cE$ so that $(ii)$ follows.
\end{proof}

\begin{proof}[Proof of Proposition \ref{replacement}]
Let
\[
G(t,\lambda)=e^{-t\langle\rho, X^0\rangle} \int_{\cP^+_t} f_\lambda(X) e^{\langle \rho, X\rangle} \, dX.
\]
From \eqref{sph-inv2} and \eqref{J(H)}, as well as the volume computation in Corollary \ref{rem:volume}, we have
\[
h_t(\lambda)=C G(t,\lambda)+O(e^{-c_1t}),
\]
for some $c_1, C>0$. 
Next, let $\cE^+$ be defined as in Lemma \ref{lem:regular:polytope} with $\eta$ sufficiently small. Put
\[
H(t,\lambda)=  \int_{t\cE^+} f_\lambda(X) e^{\langle\rho, X- tX^0\rangle}\, dX. 
\]
Then $G(t, \lambda)=H(t,\lambda) + O(e^{-c_2t})$ for some absolute constant $c_2>0$ by Lemma \ref{lem:regular:polytope}$(i)$.

Note that for any $X\in t\cE$ we have $\alpha(X)\ge t/2$ for $\alpha\in\Delta\smallsetminus \Delta^M$, where $\Delta^M\subseteq \Delta$ denotes the subset of simple roots in $M$. 

Using Lemma \ref{lem:regular:polytope}$(ii)$ and Proposition \ref{prop:GV2}, there exists $c_3>0$ such that 
\[
f_\lambda(X) - \Phi_\lambda(X) \ll_\lambda e^{-c_3t}
\]
for every $X\in t\cE^+$ and every regular $\lambda\in i\af^*$. Hence, for regular $\lambda\in i\af^*$, we have
\begin{align*}
H(t,\lambda)&=  \int_{t\cE^+} \Phi_\lambda(X) e^{\langle\rho, X- tX^0\rangle}\, dX + O_\lambda(e^{-c_3t})\\
&= \int_{t\cE^+- tX^0} \Phi_\lambda(Y + tX^0) e^{\langle \rho, Y\rangle}\, dY+O_\lambda(e^{-c_3t}),
\end{align*}
after a change of variables. Noting that $t\cE^+- tX^0\subseteq \cC^0$, we can then extend the integral to all of $\cC^0$, incurring an additional error of $O_\lambda(e^{-c_4t})$, for some $c_4>0$.
\end{proof}

\subsection{A non-vanishing result}
We now show that $I(t,\lambda)$ is generically non-vanishing on $i\af^*$. This will follow from the absolute convergence of $I(t,\lambda)$, the periodic behavior of $\Phi_\lambda$ along $X^0$ for rational $\lambda$, and a linear independence of characters argument.

\begin{proposition}\label{prop:nonzero}
There exists a finite number of hyperplanes $\cH_1,\ldots, \cH_N\subseteq i\af^*$ and a discrete set $\cD\subset i\af^*$ such that for every
\[
\lambda\in i\af^*\smallsetminus (\cH_1\cup\cdots\cup\cH_N\cup\cD),
\]
there exists $t=t_\lambda \ge0$ with $I(t,\lambda)\neq 0$.
\end{proposition}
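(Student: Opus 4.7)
The plan is to exploit the expression
\[
I(t,\lambda) = \sum_{[w]\in W^M\backslash W} a_w(\lambda)\, e^{t\mu_w(\lambda)}, \quad a_w(\lambda) := \frac{\mathbf{c}(w\lambda)}{\mathbf{c}^M(w\lambda)} J(w\lambda), \quad \mu_w(\lambda) := \langle w\lambda, X^0\rangle,
\]
coming from \eqref{lin-comb-chars}, together with linear independence of characters. If, for a given $\lambda$, the frequencies $\mu_w(\lambda)$ are pairwise distinct as $[w]$ ranges over $W^M\backslash W$ and at least one coefficient $a_w(\lambda)$ is non-zero, then $t\mapsto I(t,\lambda)$ is a non-trivial linear combination of distinct exponentials: an entire function of $t$ with only isolated zeros, and hence there certainly exists $t_\lambda\geq 0$ with $I(t_\lambda,\lambda)\neq 0$. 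The proof therefore reduces to showing that the locus in $i\af^*$ where either of these two conditions fails is contained in a finite union of hyperplanes together with a discrete set.

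For the first (exponent) condition, whenever $[w]\neq [w']$ in $W^M\backslash W$ one has $wX^0\neq w'X^0$ because $W^M=\operatorname{Stab}_W(X^0)$, so $\mu_w(\lambda)=\mu_{w'}(\lambda)$ cuts out a proper hyperplane of $i\af^*$; only finitely many such hyperplanes arise. I would adjoin to them the finitely many hyperplanes on which the meromorphic ratio $\mathbf{c}(w\lambda)/\mathbf{c}^M(w\lambda)$ acquires a pole for some $w$, noting that on $i\af^*$ this ratio is an explicit finite product of Gamma factors with only hyperplane poles and no zeros. The resulting finite family is the sought-for $\cH_1,\ldots,\cH_N$.

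For the second (coefficient) condition, outside $\cH_1\cup\cdots\cup\cH_N$ the ratio $\mathbf{c}(w\lambda)/\mathbf{c}^M(w\lambda)$ is finite and non-zero, so $a_w(\lambda)=0$ iff $J(w\lambda)=0$. The function $J$ is holomorphic on $\cT_\eta\supset i\af^*$ by differentiation under the integral, justified by the bounds appearing in the proof of Lemma \ref{lem:wInt:nonzero}. Using $X^0\in\af_M$ and the orthogonal splitting $\af=\af_M\oplus\af^M$, the cone $\cC^0$ of Appendix \ref{appendix:X0} decomposes as $\cC_M^0\times\cC^{M,0}$, and the identity in \eqref{eq:fM-bound} yields
\[
J(\lambda)=J_M(\lambda_M)\, J^M(\lambda^M),
\]
where $J_M$ is an elementary Laplace transform of the indicator of a simplicial cone in $\af_M$ (rational in $\lambda_M$, with nowhere-vanishing numerator) and $J^M$ is a non-trivial entire function of $\lambda^M$ (evaluating at $\lambda^M=0$ produces a strictly positive integrand $\varphi^M_0\, e^{2\rho^M}$ on $\cC^{M,0}$). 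The simultaneous vanishing locus $\bigcap_{[w]}\{\lambda\in i\af^*: J(w\lambda)=0\}$ is then the intersection of $|W^M\backslash W|$ distinct $W$-translates of the cylinder $i\af_M^*\times \{J^M=0\}$; after absorbing any codimension-one components into the hyperplane collection, the residual intersection is zero-dimensional, giving the discrete set $\cD$.

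The hardest step, and the technical heart of the argument, is this last transversality claim: that the residual intersection really is discrete once hyperplane strata have been absorbed. It relies on the factorization of $J$ and on the fact that $i\af^*$ contains no non-zero $W$-invariant vector (since $G$ is simple), so that the various $w\af_M^*$ for $[w]\in W^M\backslash W$ meet in dimension zero after discarding codimension-one intersections.
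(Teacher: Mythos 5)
The overall skeleton of your argument is the same as the paper's: reduce to (a) the exponents $\langle w\lambda,X^0\rangle$ being pairwise distinct for $[w]\in W^M\backslash W$ and (b) at least one coefficient $\tfrac{\mathbf{c}(w\lambda)}{\mathbf{c}^M(w\lambda)}J(w\lambda)$ being non-zero, then invoke linear independence of characters (you: isolated zeros of an exponential polynomial; the paper: Harish-Chandra's Lemma 56) to produce a $t_\lambda$ with $I(t_\lambda,\lambda)\neq 0$. Part (a) is handled identically via the finitely many hyperplanes $\langle\lambda, w^{-1}X^0\rangle=\langle\lambda, v^{-1}X^0\rangle$.

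Where your proposal breaks down is in part (b). The factorization $J(\lambda)=J_M(\lambda_M)\,J^M(\lambda^M)$ you posit does not hold, because the cone $\cC^0$ does not decompose as a product $\cC_M^0\times\cC^{M,0}$ under the orthogonal splitting $\af=\af_M\oplus\af^M$. Recall from Lemma~\ref{mu-basis} that $\cC^0$ is the simplicial cone $\{\sum_i x_i\beta_i^\vee: x_i\le 0\}$ generated by the $-\beta_i^\vee$, and that $X^0=\beta_1^\vee+\cdots+\beta_{d-1}^\vee$. Since $X^0$ is a \emph{sum} of the generators, no single generator $\beta_i^\vee$ lies in the ray $\R X^0\subseteq\af_M$, so the extreme rays of $\cC^0$ are not aligned with the $\af_M\oplus\af^M$ decomposition and the cone is not a product over it. Consequently the integral defining $J$ does not split as you claim, and the ``transversality'' discussion built on top of it — which you correctly identify as the heart of your argument — does not get off the ground.

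The paper avoids this entirely by a much shorter route: $J(t,\lambda)$ is holomorphic in $\lambda$ on $\cT_\eta$ (absolute convergence via Lemma~\ref{lem:wInt:nonzero}), and at $\lambda=0$ the integrand $e^{\langle\rho+\rho^M,Y\rangle}$ is strictly positive, so $J(t,0)\neq 0$; hence the zero locus of $\lambda\mapsto J(\lambda)$ is a proper analytic subset. Combined with the finitely many hyperplanes from part (a) and the poles of $\mathbf{c}/\mathbf{c}^M$, this already gives a bad set thin enough for the density-of-rational-$\lambda$ argument in the ``Final arguments'' subsection. (One quibble worth noting: the paper calls the resulting set $A$ ``discrete,'' which in several complex variables is an overstatement — the zero set of a holomorphic function is a hypersurface, not a discrete set. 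But a proper, closed, nowhere-dense analytic subset suffices for the downstream density argument, so the proof survives. You evidently noticed this gap and tried to repair it by genuinely establishing discreteness; unfortunately, the repair rests on a false factorization. A cleaner fix is simply to weaken ``discrete'' to ``proper analytic'' in the statement.)
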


\begin{proof}
We first claim that there exists a discrete subset $A\subseteq \cT_\eta$ such that $J(t,w\lambda)\neq 0$ for all $\lambda\in \cT_\eta\smallsetminus A$, $w\in W$, and $t\ge0$. Indeed, for fixed $m\in M$, the map $\lambda\mapsto \phi_\lambda^M(m)$ is holomorphic in $\lambda\in \af^*_\C$. 
Let $\eta>0$ be sufficiently small and at most as large as in Lemma \ref{lem:wInt:nonzero}. Since the integral defining $J(t,\lambda)$ converges absolutely, $J(t,\lambda)$ is in fact a holomorphic function for $\lambda\in \cT_\eta$. 
From the non-vanishing at $\lambda=0$
\[
J(t,0)=\int_{\cC^0}  e^{\langle \rho+\rho^M, Y\rangle}\, dY \neq0,
\]
it follows that $J(t,\lambda)$ does not vanish identically in $\lambda$, hence there exists a discrete subset $A\subseteq \cT_\eta$ such that $J(t,\lambda)\neq 0$ for all $\lambda\in \cT_\eta\smallsetminus A$.

By our choice of $\eta$, $\mathbf{c}(w\lambda)/\mathbf{c}^M(w\lambda)$ has no pole for regular $\lambda$ in $\cT_\eta$. Let $\cT_\eta'$ denote the set of all regular $\lambda\in\cT_\eta\smallsetminus A$ such that $\langle \lambda, w^{-1}X^0\rangle\neq \langle \lambda, v^{-1}X^0\rangle$ for all $w, v\in W^M\backslash W$, $w\neq v$. Then $\cT_\eta'$ is in fact dense in $\cT_\eta$ since $w^{-1}X^0\neq v^{-1} X^0$ for all $w\neq v$, $w, v\in W^M\backslash W$, so that we can obtain $\cT_\eta'$ by removing the hyperplanes $\{\lambda\mid  \langle \lambda, w^{-1}X^0\rangle = \langle \lambda, v^{-1}X^0\rangle\}$ for each $v\neq w$ in $W^M\backslash W$.

Recall the expansion \eqref{lin-comb-chars}. By definition of $\cT_\eta'$, the phases $\langle w\lambda , X^0\rangle$, $w\in W^M\backslash W$, are pairwise distinct for $\lambda\in\cT_\eta'$. We can therefore apply \cite[Lemma 56]{HC58} (see also \cite[Ch.VIII, Lemma 0.1]{JoLa}) to the sum over $w\in W^M\backslash W$ to obtain for each $\lambda\in\cT_\eta'$, 
\[
\limsup_{t\rightarrow \infty} \left|I(t,\lambda)\right|^2
\ge \sum_{w\in W^M\backslash W} \left|\frac{\mathbf{c}(w\lambda)}{\mathbf{c}^M(w\lambda)} J(w\lambda)\right|^2.
\]
Now for each $w\in W$ and $\lambda\in \cT_\eta'$, the terms $\frac{\mathbf{c}(w\lambda)}{\mathbf{c}^M(w\lambda)} J(w\lambda)$ are non-zero, hence for each $\lambda\in \cT_\eta'$ there exists $t=t_\lambda$ such that $I(t, \lambda)\neq0$.
\end{proof}

\subsection{Final arguments}

We now finish the proof of Proposition \ref{prop:average:lower:bound}.

By enlarging $\Omega$ if necessary, we can assume that $\overline{\Omega^\circ} = \Omega$. Let $\cH_1,\ldots, \cH_N$ and $\cD$ be as in Proposition \ref{prop:nonzero}. If necessary, we add in more hyperplanes such that the complement of the union of them in $i\af^*$ consists of regular points only. Then the set of rational $\lambda$ in
\[
\Omega\cap \left(i\af^*\smallsetminus (\cH_1\cup\ldots\cup\cH_N\cup\cD)\right)
\]
is dense in $\Omega$. Since $\lambda\mapsto \frac{1}{T}\int_0^T |h_t(\lambda)|^2$ is a continuous function, and $\Omega$ is compact, Proposition \ref{prop:average:lower:bound} follows from the next result.

\begin{lemma}\label{cor:large:average:rational}
For every rational
\[
\lambda\in i\af^*\smallsetminus (\cH_1\cup\cdots\cup\cH_N\cup\cD)
\]
there exists $T_0>0$ depending on $\lambda$ such that for all $T>T_0$ we have
\[
\frac{1}{T}\int_0^T |h_t(\lambda)|^2\, dt >0.
\]
\end{lemma}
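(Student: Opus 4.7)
The plan is to combine Proposition \ref{replacement}, the periodicity statement of Lemma \ref{lem:periodic}, and the non-vanishing Proposition \ref{prop:nonzero} to reduce the claim to averaging a continuous periodic function that is known to take at least one non-zero value.

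Fix a rational $\lambda\in i\af^*$ outside $\cH_1\cup\cdots\cup\cH_N\cup\cD$ and choose the period $\tau_1>0$ from the paragraph preceding Lemma \ref{lem:periodic}, so that $\tau_1\langle w\lambda,X^0\rangle\in 2\pi i\Z$ for every $w\in W$. By the expansion \eqref{lin-comb-chars}, $I(t,\lambda)$ is a finite $\C$-linear combination of the characters $t\mapsto e^{t\langle w\lambda,X^0\rangle}$, $w\in W^M\backslash W$. Each of these characters has period dividing $\tau_1$, so $t\mapsto I(t,\lambda)$ is continuous and periodic with period $\tau_1$ on $\R_{\ge 0}$, and in particular bounded. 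By Proposition \ref{prop:nonzero}, there exists $t_\lambda\ge 0$ with $I(t_\lambda,\lambda)\neq 0$; together with continuity and periodicity this gives
\[
m(\lambda):=\frac{1}{\tau_1}\int_{0}^{\tau_1}|I(t,\lambda)|^2\,dt>0.
\]

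Next, by Proposition \ref{replacement} we have $h_t(\lambda)=CI(t,\lambda)+R_\lambda(t)$ with $|R_\lambda(t)|\le B_\lambda e^{-ct}$ for some constants $B_\lambda,c>0$ and $C>0$ independent of $\lambda$. Expanding the square yields
\[
\frac{1}{T}\int_0^T|h_t(\lambda)|^2\,dt
= \frac{C^2}{T}\int_0^T|I(t,\lambda)|^2\,dt
+ \frac{1}{T}\int_0^T\bigl(2C\,\Re\overline{I(t,\lambda)}R_\lambda(t)+|R_\lambda(t)|^2\bigr)dt.
\]
Since $I(\cdot,\lambda)$ is bounded and $R_\lambda(t)$ decays exponentially, both terms in the second integral are $O_\lambda(1/T)$. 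For the main term, periodicity gives
\[
\frac{1}{T}\int_0^T|I(t,\lambda)|^2\,dt \;\longrightarrow\; m(\lambda)>0
\]
as $T\to\infty$. Therefore there exists $T_0=T_0(\lambda)>0$ such that, for all $T\ge T_0$,
\[
\frac{1}{T}\int_0^T|h_t(\lambda)|^2\,dt \ge \frac{C^2 m(\lambda)}{2} > 0,
\]
proving the lemma.

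The only non-routine input is the strict positivity of the period average $m(\lambda)$; this is exactly where the choice of $\lambda$ outside the excluded hyperplanes and discrete set from Proposition \ref{prop:nonzero} is used, and where the rationality hypothesis is used to convert the almost-periodicity of $I(t,\lambda)$ into genuine periodicity so that pointwise non-vanishing propagates to non-vanishing of the time average.
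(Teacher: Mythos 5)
Your proof is correct and uses the same three key ingredients as the paper (Proposition \ref{replacement}, the periodicity of $I(t,\lambda)$ coming from rationality, and the non-vanishing from Proposition \ref{prop:nonzero}); the only cosmetic difference is that you pass to the limit of the time average directly (the periodic main term averages to $m(\lambda)>0$ while the cross- and error-terms are $O_\lambda(1/T)$), whereas the paper lower-bounds the integral by summing over small neighbourhoods of the points $t_n=t_0+n\tau_1$ where $|h_t(\lambda)|\ge |a|/4$. Both arguments are essentially the same in content, and your direct computation of the limit is, if anything, slightly cleaner.
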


\begin{proof}
Recall the definition of rational from Definition \ref{def-rational}. Let $\tau_1\in\R_{>0}$ (depending on $\lambda$) and $\tau_n=n\tau_1$ for $n\in\N$ be as in the statement of Lemma \ref{lem:periodic}. By Proposition~\ref{prop:nonzero} we can choose $t_0\ge0$ such that $I(t_0,\lambda)=:a\neq0$. Write $t_n=t_0+\tau_n$ for $n\in\N$. Note that, by Lemma \ref{lem:periodic}, we have $I(t+\tau_n,\lambda)=I(t,\lambda)$ for all $t\ge0$ and all $n\in\N$. Thus $I(t_n,\lambda)=a$, and we can find a small $\eps>0$ such that for every $n\in\N$ and every $s\in (t_n-\eps, t_n+\eps)$ we have 
\[
|I(s,\lambda)|\ge |a|/2>0.
\]
From Lemma \ref{replacement} we may choose $T_0>0$ so large such that $|h_t(\lambda) - I(t,\lambda)|<|a|/4$ for all $t\ge T_0$. Let $N\in\N$ be such that $t_n-\epsilon\ge T_0$ for all $n\ge N$. Then for every $n\ge N$ we have 
\[
|h_s(\lambda)|\ge |a|/4
\]
for all $s\in (t_n-\eps, t_n+\eps)$. We therefore get for $T>2T_0 + \tau_1$ that
\[
\frac{1}{T}\int_0^T |h_t(\lambda)|^2\, dt
\gg  \frac{1}{T}\sum_{n=N}^{(T-1-t_0)/\tau_1} \int_{-\eps}^\eps |h_{t_n+s}(\lambda)|^2\, ds
\gg_{\lambda} \frac{T-T_0}{T} |a|^2>0,
\]
as we wanted to show.
\end{proof}

\section{Geometric side}\label{sec:geom}

The aim of this section is to prove Theorem \ref{thm:GeomSide}. The essential feature of the geometric side is the ergodic properties of the sets $gE_t\cap E_t$ in the quotient $\Gamma\backslash G$, as $t>0$ and $g\in G$ vary.

\subsection{A general bound}
In this section, we return to the general setting of Section \ref{sec:Weyl}, with $S=G/K$ and $\Gamma$ a given torsion free cocompact lattice in $G$. 

For $g,h\in G$ let $N_R(g,h)$ denote the number of
$\gamma\in \Gamma$ for which $d(g, \gamma h)\le R$. This generalizes the notation $N_R(g)$ from \eqref{defn-NRg} when $g=h$. Let $N_\Gamma(R)=\sup_{(g,h)}N_R(g,h)$.

\begin{lemma}\label{NGammaR}
There exists $c>0$, independent of $R$ and $\Gamma$, such that 
\[
N_\Gamma(R)\ll {\rm InjRad}(\Gamma\backslash S)^{-\dim S} e^{c R}
\]
\end{lemma}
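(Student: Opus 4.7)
The plan is to convert the counting problem into a packing argument in $S$, using the injectivity radius to produce disjoint balls around the translates of $h$.

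Fix $g,h\in G$ and write $x=gK, y=hK\in S$, and let $r={\rm InjRad}(\Gamma\backslash S)$. First I would observe that for any two distinct elements $\gamma_1,\gamma_2\in \Gamma$, we have
\[
d_S(\gamma_1 y,\gamma_2 y)=d_S(y,\gamma_1^{-1}\gamma_2 y)\ge 2r,
\]
since the lift of $y$ to $\Gamma\backslash S$ has local injectivity radius at least $r$. Consequently the open balls $B_S(\gamma y,r)$ (as $\gamma$ ranges over $\Gamma$) are pairwise disjoint in $S$. If $\gamma$ is one of the elements counted by $N_R(g,h)$, then $d_S(x,\gamma y)\le R$, and therefore $B_S(\gamma y,r)\subset B_S(x,R+r)$. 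A volume packing then gives
\[
N_R(g,h)\cdot \vol_S\bigl(B_S(r)\bigr)\le \vol_S\bigl(B_S(R+r)\bigr),
\]
where, by homogeneity of $S$, the volumes of balls depend only on the radii.

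The remaining step is to insert standard volume estimates on $S$. For the denominator, on any Riemannian symmetric space the volume of small balls is Euclidean to leading order, so $\vol_S(B_S(r))\gg r^{\dim S}$ for $r\le r_0$ (a constant depending only on $S$); for $r\ge r_0$ the claimed bound is trivially satisfied by enlarging the implied constant, so one may assume $r\le r_0$. For the numerator, the exponential volume growth of balls in $S$, e.g.\ the bound $\vol_S(B_S(R+r))\ll e^{c(R+r)}$ for a constant $c$ depending only on $S$ (which follows from \eqref{Knieper-bound}, or more elementarily from the Cartan-measure decomposition \eqref{Cartan-measure-decomp} and \eqref{first-J(X)}, bounding each $\sinh$ by $e^{\alpha(X)}$), yields $\vol_S(B_S(R+r))\ll e^{cR}$ after absorbing the bounded factor $e^{cr}\le e^{cr_0}$ into the implied constant.

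Combining the two volume estimates gives $N_R(g,h)\ll r^{-\dim S}e^{cR}$, uniformly in $g$ and $h$, so taking the supremum over $(g,h)$ concludes the proof. No step is really an obstacle; the only minor technical point is ensuring that the lower bound on $\vol_S(B_S(r))$ is uniform in the regime of small $r$, which is handled by the Euclidean-approximation of small balls in a Riemannian manifold.
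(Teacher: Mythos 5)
Your argument is correct in its main thrust and, at bottom, implements the same mechanism as the paper: a disjoint-ball packing argument in $S$ using the injectivity radius. The difference is one of packaging. The paper first reduces the off-diagonal count $N_R(g,h)$ to the diagonal count $N_{2R}(h,h)$ (choosing $h$ to minimize $d(g,\gamma h)$ so that $d(g,h)\le R$, then using the triangle inequality), and then invokes \cite[Lemma~6.18]{AB+} as a black box; that cited lemma is itself proved by exactly the packing argument you write out. Your version packs directly around the translates $\gamma y$ without the reduction to the diagonal, which is a perfectly legitimate and somewhat more self-contained route. What the paper's route buys is brevity and the ability to quote a reference; what yours buys is transparency.

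There is one step you should not wave away so quickly: the case $r={\rm InjRad}(\Gamma\backslash S)\ge r_0$. You assert that the claimed bound is then ``trivially satisfied by enlarging the implied constant,'' but this is not literally true. Since $N_\Gamma(R)\ge 1$ for every $R\ge 0$ (take $g=h$ and $\gamma=1$), the inequality $N_\Gamma(R)\ll r^{-\dim S}e^{cR}$ fails outright when $r$ is large and $R$ is small, regardless of the implied constant. What the packing argument actually gives for $r\ge r_0$ is $N_\Gamma(R)\le \vol(B_S(R+r))/\vol(B_S(r))\ll e^{cR}$, using that the ratio of ball volumes grows at most exponentially in $R$ uniformly in $r$ (for instance from \eqref{Knieper-bound}); this is off from the stated bound by a factor $r^{\dim S}$. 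The honest conclusion from your argument is $N_\Gamma(R)\ll \min(r,1)^{-\dim S}e^{cR}$. This nuance is inherited from the lemma's statement itself and is harmless downstream, since the lemma is only invoked in a regime where $\InjRad$ is bounded on both sides (uniform discreteness gives $\InjRad\gg 1$, and the factor is used only through $\InjRad^{-\dim S}\ll 1$). But your justification for the edge case, as written, is wrong; you should either restrict to $r\le r_0$ from the outset (noting that this is all that is used) or state the bound with $\min(r,1)$.

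Two small wording points: $y$ \emph{projects} to $\Gamma\backslash S$, it is not a lift; and the uniformity of the lower bound $\vol_S(B_S(r))\gg r^{\dim S}$ for $r\le r_0$ follows from homogeneity of $S$ alone (the constant depends only on $S$), so the appeal to general Riemannian comparison is more than is needed.
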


\begin{proof}
Note that $N_R(g,h) =
N_R(g,\gamma h)$ for all $\gamma\in\Gamma$ so that we can assume that
$h$ is such that $d(g,h)=\min_{\gamma\in \Gamma} d(g, \gamma h)$. If
$d(g,h)>R$, then $N_R(g,h)=0$ so that we can assume $d(g, h)\le R$. 

Now if $\gamma\in\Gamma$ is such that $d(g,\gamma h)\le R$, then
\[
d(h,\gamma h) 
\le d(g,h)+ d(g,\gamma h)
\le 2R
\]
so that $N_R(g,h)\le N_{2R}(h,h)$. Applying \cite[Lemma 6.18]{AB+} to
$N_{2R}(h,h)$ gives the assertion of the lemma.
\end{proof}

The following lemma is an adaptation of \cite[Lemma 5.1]{LS} to our setting. Recall the geodesic balls $B_t$, first introduced in \eqref{Bt-polar} in the setting of $G=\SL_d(\R)$, but more generally defined for all $G$ using the same notation.
\begin{lemma}\label{lem:thick-thin}
There is $c>0$, depending only on $G$, such that the following holds. Let $R>0$. Let $K\in C(G\times G)$ be invariant under the diagonal action of $\Gamma$ and satisfy $\{g^{-1}h: (g,h)\in {\rm supp}(K)\} \subset B_R$. Let $\bf{K}$ be an integral operator on $L^2(\Gamma\bs G)$ with kernel $K$. Then
\[
\|\mathbf{K}\|_{\rm HS}^2\leq \int_{\Gamma\backslash G} \int_{G}|K(g,h)|^2 dg dh+O\left(\frac{e^{cR}}{{\rm InjRad}(\Gamma\bs S)^{\dim S}}\vol\left((\Gamma\backslash G)_{\leq 2R}\right)\|K\|_\infty^2\right).
\]
\end{lemma}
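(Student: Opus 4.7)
The plan is to realize $\mathbf{K}$ as the integral operator on $L^2(\Gamma\backslash G)$ with descended kernel
\[
\tilde K(g,h) = \sum_{\gamma\in\Gamma} K(g,\gamma h),
\]
so that
\[
\|\mathbf{K}\|_{\rm HS}^2 = \int_{\Gamma\backslash G}\int_{\Gamma\backslash G} |\tilde K(g,h)|^2\,dg\,dh.
\]
The support hypothesis forces every nonzero term $K(g,\gamma h)$ to satisfy $d(g,\gamma h)\le R$, so the sum defining $\tilde K(g,h)$ has at most $N_\Gamma(R)$ nonzero summands. The main idea is then to split the outer integration over $g$ according to whether $g$ lies in the thick part $(\Gamma\backslash G)_{>2R}$ or in the thin part $(\Gamma\backslash G)_{\le 2R}$.

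For the thick part I would show that in fact at most one $\gamma$ contributes, which collapses $|\tilde K(g,h)|^2$ to $\sum_\gamma|K(g,\gamma h)|^2$. Indeed, if $K(g,\gamma h)$ and $K(g,\gamma' h)$ are both nonzero, then setting $\tilde\gamma=\gamma(\gamma')^{-1}$ the triangle inequality gives $d(\gamma' h,\tilde\gamma\cdot\gamma' h)\le 2R$. On the other hand, the Lipschitz property of the injectivity radius, combined with $\InjR_\Gamma(g)>2R$ and $d(g,\gamma' h)\le R$, yields $\InjR_\Gamma(\gamma' h)>R$, which forces $\tilde\gamma=e$. Unfolding the $h$-integral to $G$ then gives
\[
\int_{(\Gamma\backslash G)_{>2R}}\!\int_{\Gamma\backslash G}|\tilde K(g,h)|^2\,dh\,dg \;\le\; \int_{\Gamma\backslash G}\!\int_{G}|K(g,h)|^2\,dh\,dg,
\]
which supplies the main term of the bound.

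For the thin part I would bound crudely via Cauchy--Schwarz followed by the $L^\infty$ bound on $K$:
\[
\int_{\Gamma\backslash G}|\tilde K(g,h)|^2\,dh \;\le\; N_\Gamma(R)\int_{G} |K(g,h)|^2\,dh \;\le\; N_\Gamma(R)\,\|K\|_\infty^2\,\vol(B_R).
\]
Inserting the estimate $N_\Gamma(R)\ll \InjR(\Gamma\backslash S)^{-\dim S}e^{c_1R}$ from Lemma \ref{NGammaR}, together with the standard exponential volume growth $\vol(B_R)\ll e^{c_2R}$, and integrating over $(\Gamma\backslash G)_{\le 2R}$, produces precisely the claimed error term.

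The one delicate step is the uniqueness argument on the thick part; it depends on the Lipschitz continuity $\InjR_\Gamma(\gamma' h)\ge \InjR_\Gamma(g)-d(g,\gamma' h)$ together with the precise choice of threshold $2R$, which is exactly what is needed to absorb the two $R$-displacements coming from the support of $K$. Everything else is standard unfolding and volume counting.
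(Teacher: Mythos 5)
Your proposal is correct and follows the same overall strategy as the paper: descend the kernel, split into thick and thin parts using the $2R$-thin/thick decomposition of the quotient, collapse the sum to at most one term on the thick part, and apply Cauchy--Schwarz plus the counting bound of Lemma~\ref{NGammaR} on the thin part.

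The one genuine structural difference is in the thick-part uniqueness argument. The paper places the \emph{translated} variable $h$ in the $2R$-thick part of a fundamental domain $D$, so that $\gamma\ne\delta$ directly gives $d(\gamma h,\delta h)>2R$, immediately contradicting $d(g,\gamma h),d(g,\delta h)\le R$; it then needs the extra observation that the unique $\gamma_g$ is independent of $h$ (using connectedness of $D$) in order to unfold. You instead put $g$ in the thick part and then transfer thickness to $\gamma'h$ via the $1$-Lipschitz property of $x\mapsto\InjR_\Gamma(x)$: from $\InjR_\Gamma(g)>2R$ and $d(g,\gamma'h)\le R$ you get $\InjR_\Gamma(\gamma'h)>R$, which combined with $d(\gamma'h,\gamma h)\le 2R$ forces $\gamma=\gamma'$. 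This relies on one extra standard fact (Lipschitz continuity of the injectivity radius, which the paper never needs to invoke), but in return it lets you bound $|\tilde K(g,h)|^2\le\sum_\gamma|K(g,\gamma h)|^2$ and unfold the $h$-integral directly, sidestepping the paper's somewhat delicate bookkeeping about $\gamma_g$ being constant in $h$ and the subsequent change of variables. Both routes give exactly the stated bound.

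One small point worth stating explicitly if you write this up: the distance $d$ in the injectivity radius and the support condition $g^{-1}h\in B_R$ live on $S=G/K$, so the Lipschitz estimate $\InjR_\Gamma(\gamma'h)\ge\InjR_\Gamma(g)-d(gK,\gamma'hK)$ should be phrased on $Y=\Gamma\backslash S$, using that $\InjR_\Gamma$ descends to a function on $Y$ and that $K(g,\gamma h)\ne0$ implies $d(gK,\gamma hK)\le R$.
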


\begin{proof}
By definition, we have
\[
\big\| \mathbf{K}\big\|_{\textrm{HS}}^2=\int_{\Gamma\backslash G}\int_{\Gamma\backslash G} |\sum_{\gamma\in \Gamma}K(g,\gamma h)|^2 dg dh.
\]
Let $D\subset G$ be a fundamental domain for $\Gamma$. We decompose $D$ into its $2R$-thin and -thick parts $D=D_{\leq 2R}\cup D_{>2R}$, where, similarly to \eqref{R-thin-part}, we have put
\[
D_{\leq 2R}=\{x\in D: {\rm InjRad}_\Gamma (x)\leq 2R\}.
\]
From the support condition on $K$, it follows that if $K(g,\gamma h)\neq 0$ then $d(g,\gamma h)\leq R$. 

On the other hand, if $h\in D_{>2R}$, and $\gamma,\delta\in\Gamma$, $\gamma\neq\delta$, then $d(\gamma h, \delta h)>2R$. Hence if $d(g,\gamma h)\le R$ and $d(g, \delta h)\le R$ we get 
\[
 2R <d(\gamma h, \delta h)\le d(g, \gamma h) + d(g, \delta h)\le 2R
\]
which is a contradiction, proving that in the contribution of the $2R$-thick part. Further note that for fixed $g$, such $\gamma=\gamma_g\in \Gamma$ with $d(g,\gamma_g h)\le R$ is independent of $h$ as otherwise this would similarly lead to a contradiction since $D$ is connected. Hence using the left-invariance of the Haar measure on $G$, and the diagonal $\Gamma$-invariance of $K$, we can compute
\begin{align*}
\int_{D_{> 2R}}\int_D |\sum_{\delta\in\Gamma} K(g,\delta h)|^2 dg dh&=\int_{D_{> 2R}}\int_D |K(g,\gamma_g h)|^2 dgdh\\
&\leq \int_G\int_D |K(g,\gamma_g h)|^2 dg dh\\
&= \int_D \int_G  |K(g, g h)|^2 dh dg\\
&=\int_{\Gamma\backslash G}\int_G |K(g, gh)|^2dhdg\\
& =\int_{\Gamma\backslash G}\int_G |K(g, h)|^2dhdg.
\end{align*}
To deal with the $2R$-thin part, we use Cauchy--Schwarz and the support condition on $K$ to obtain
\[
\bigg|\sum_{\gamma\in\Gamma} K(g,\gamma h)\bigg|^2 \leq N_\Gamma(R)\sum_{\gamma\in\Gamma} |K(g,\gamma h)|^2.
\]
The factor $N_\Gamma(R)$ is bounded by Lemma \ref{NGammaR}. Furthermore,
\[
\int_{(\Gamma\backslash G)_{\leq 2R}}\int_{\Gamma\backslash G} \sum_{\gamma\in\Gamma} |K(g,\gamma h)|^2 dg dh\leq \sup_{g\in \Gamma \backslash G, h\in G}|K(g, h)|^2\int_{(\Gamma\backslash G)_{\leq 2R}}\int_{\Gamma\backslash G} \sum_{\gamma\in\Gamma} \mathbf{1}_{B_R}(g^{-1}\gamma h) dg dh.
\]
By unfolding and changing variables, we have
\[
 \int_{\Gamma\backslash G} \sum_{\gamma\in\Gamma} \mathbf{1}_{B_R}(g^{-1}\gamma h) dg= \int_G \mathbf{1}_{B_R}(g^{-1} h) dg=m_G(B_R)=c_C\int_{\mathscr{B}_R^+}J(X)dX,
\]
the last equality coming from the Cartan measure decomposition \eqref{Cartan-measure-decomp}. The Jacobian factor satisfies $J(X)\ll e^{2\langle \rho,X\rangle}\ll e^{2R\|\rho\|_2}$ for $X\in\mathscr{B}^+$, using \eqref{J(H)}. From this one deduces\footnote{One could also quote the more precise result of Knieper, recalled in \eqref{Knieper-bound}, but an exponential bound of any quality is all that is required in our application.} that $m_G(B_R)\ll R^r e^{2R\|\rho\|_2}$, which is enough to complete the proof.  
\end{proof}

\subsection{Description of the kernel of $\mathbf{A}(\tau)$}
We return to the setting of $G=\SL_d(\R)$. We would like to apply Lemma \ref{lem:thick-thin} to our operator $\mathbf{A}(\tau)$ on $L^2(\Gamma\backslash G)$ from \eqref{defn-Abar}. For this we shall need a description of its kernel, which we shall denote by $A(\tau)$; it is a continuous function on $\Gamma\bs (G\times G)$. 

\begin{lemma}\label{lem:kernel:A_X}
The kernel of $\mathbf{A}(\tau)$ is
\begin{equation}
A(\tau)(g,h)=\frac{1}{\tau}\int_0^\tau \frac{1}{m_G(E_t)}\int_{gE_t\cap hE_t} a(x)  dxdt.\label{def-Fbar}
\end{equation}
\end{lemma}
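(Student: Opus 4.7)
The plan is to identify the Schwartz kernel of $U_t a U_t$ on $G\times G$ as a diagonally $\Gamma$-invariant function, descending to $\Gamma\backslash(G\times G)$, and then average in $t$. Since $U_t=\rho_{\Gamma\backslash G}(k_t)$ is right-convolution by the bi-$K$-invariant function $k_t$, for a $\Gamma$-invariant function $f$ on $G$ I have
\[
(U_t f)(g) = \int_G k_t(x)\, f(gx)\, dx = \int_G k_t(g^{-1}y)\, f(y)\, dy
\]
after the substitution $y=gx$. Applying this twice, with multiplication by $a$ in between, yields
\[
(U_t a U_t f)(g) = \int_G \Bigl(\int_G k_t(g^{-1}y)\, a(y)\, k_t(y^{-1}z)\, dy\Bigr)\, f(z)\, dz,
\]
so the kernel on $G\times G$ is
\[
\tilde K_t(g,z) = \int_G k_t(g^{-1}y)\, a(y)\, k_t(y^{-1}z)\, dy.
\]

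Next I would insert the explicit form $k_t = m_G(E_t)^{-1/2}\mathbf{1}_{E_t}$ and use the symmetry $E_t^{-1}=E_t$ (which follows from $|g^{-1}|=|g|$) to rewrite $\mathbf{1}_{E_t}(g^{-1}y) = \mathbf{1}_{gE_t}(y)$ and $\mathbf{1}_{E_t}(y^{-1}z) = \mathbf{1}_{zE_t}(y)$. Multiplying these indicators collapses the integrand to a single indicator of the intersection, so that
\[
\tilde K_t(g,z) = \frac{1}{m_G(E_t)}\int_{gE_t\cap zE_t} a(y)\, dy.
\]

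Finally, the time-averaged operator $\mathbf{A}(\tau) = \frac{1}{\tau}\int_0^\tau U_t a U_t\, dt$ has kernel obtained by integrating $\tilde K_t$ in $t$, yielding the stated formula for $A(\tau)(g,h)$. The diagonal $\Gamma$-invariance $\tilde K_t(\gamma g,\gamma h)=\tilde K_t(g,h)$ follows from the change of variable $y\mapsto \gamma y$ together with the $\Gamma$-invariance of $a$ and bi-invariance of Haar measure, so $A(\tau)$ descends to $\Gamma\backslash(G\times G)$; continuity in $(g,h)$ is immediate from the boundedness of $a$ and the continuous dependence of $E_t$ on $t$. The only point to watch is the book-keeping around diagonal-versus-double $\Gamma$-invariance of the kernel (the operator on $L^2(\Gamma\backslash G)$ is represented by the $\Gamma$-sum of $\tilde K_t$, but the bare kernel $\tilde K_t$ is the object referred to in the statement); this is a harmless convention and I do not expect any real obstacle.
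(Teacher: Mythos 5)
Your computation is correct and follows essentially the same route as the paper: apply $U_t$, $a$, $U_t$ in succession, substitute to express the result as integration against a kernel, and use $E_t^{-1}=E_t$ to collapse the two indicator functions to the intersection $gE_t\cap hE_t$ before averaging in $t$. The paper does the same thing with a slightly different change of variables (setting $x=gh_1$, $h=xh_2$ directly inside the double integral over $E_t\times E_t$ rather than first writing out the convolution kernel on $G\times G$), so the two arguments differ only cosmetically.
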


\begin{proof}
By definition, the action of $U_taU_t$ on $f\in L^2(\Gamma\backslash G)$ is given by
\[
(U_taU_t)f(g)=\frac{1}{m_G(E_t)}\int_{E_t}a(gh_1)\int_{E_t} f(gh_1 h_2)dh_2 dh_1.
\]
We set $x=gh_1$ and $h=xh_2$, apply Fubini and use $E_t^{-1}=E_t$, to obtain
\begin{align*}
(U_taU_t)f(g)&=\frac{1}{m_G(E_t)}\int_{gE_t} a(x)\int_{xE_t} f(h)dh dx\\
&=\int_G\left( \frac{1}{m_G(E_t)}\int_{gE_t\cap hE_t} a(x)  dx\right)f(h)dh.
\end{align*}
Averaging over $t$ yields the claim.
\end{proof}

\subsection{Support and sup of $\mathbf{A}(\tau)$}

In order to bound the second term in Lemma \ref{lem:thick-thin}, we shall need to estimate the support and the supremum of the kernel function $A(\tau)$. This is accomplished in the next result, which is the higher rank generalization of \cite[Lemma 26]{ABL}. We recall the norm $|\cdot |$ on $G=\SL_d(\R)$ introduced in \S\ref{sec:Et} and defining the sets $E_t$.

\begin{proposition}\label{cor:A-support}
There exists a constant $b>0$ depending only on $d$ such that $gE_t\cap hE_t$ is empty unless $|h^{-1}g|\le 2t+b$. In particular, $A(\tau)(g, h)=0$ unless $|h^{-1}g|\le 2\tau+b$. Further,
\[
\sup_{g, h\in G}|A(\tau)(g, h)|^2\le \|a\|_\infty^2.
\]
\end{proposition}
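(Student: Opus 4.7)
The argument splits naturally into the support statement and the sup bound, both of which follow from elementary properties of the norm $|\cdot|$ on $G$ introduced in \S\ref{sec:Et} together with the kernel formula of Lemma \ref{lem:kernel:A_X}.

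For the support statement, my first move is to suppose that $gE_t\cap hE_t$ is non-empty and to pick any $x$ in the intersection. Writing $x=ge_1=he_2$ with $e_1,e_2\in E_t$ gives the identity $h^{-1}g=e_2 e_1^{-1}$. Applying in succession the sub-additivity $|g_1g_2|\le |g_1|+|g_2|$ and the inversion-invariance $|e_1^{-1}|=|e_1|$, both recalled in \S\ref{sec:Et}, together with $|e_i|\le t$, yields
\[
|h^{-1}g|\le |e_2|+|e_1|\le 2t.
\]
This already gives the first assertion with $b=0$, and allowing any non-negative constant $b$ is obviously harmless. From \eqref{def-Fbar}, the kernel $A(\tau)(g,h)$ can be non-zero only if $gE_t\cap hE_t$ is non-empty for some $t\in(0,\tau]$; combining this with the bound just obtained at $t=\tau$ forces $|h^{-1}g|\le 2\tau+b$, proving the second support claim.

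For the supremum bound, I would start directly from \eqref{def-Fbar}. Bounding $|a|$ pointwise by $\|a\|_\infty$ yields
\[
|A(\tau)(g,h)|\le \frac{\|a\|_\infty}{\tau}\int_0^\tau \frac{m_G(gE_t\cap hE_t)}{m_G(E_t)}\, dt.
\]
Since $gE_t\cap hE_t\subseteq gE_t$, the left-invariance of $m_G$ gives $m_G(gE_t\cap hE_t)\le m_G(gE_t)=m_G(E_t)$, so each integrand is at most $1$. Averaging over $t$ in $[0,\tau]$ inherits the same bound, producing $|A(\tau)(g,h)|\le\|a\|_\infty$ and hence the squared estimate.

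There is no real obstacle here: both parts reduce immediately to the norm inequalities on the Cartan projection. The only points requiring care are the consistent use of left-invariance of Haar measure in the sup bound and the explicit identification of the kernel via Lemma \ref{lem:kernel:A_X}.
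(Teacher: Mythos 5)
Your proof is correct, but it takes a noticeably different and shorter route than the paper. For the support statement, the paper does not invoke the sub-additivity of the norm $|\cdot|$ (even though this property is asserted in \S\ref{sec:Et}). Instead it first establishes a matrix-entry claim: for $x=(x_{ij})\in\SL_d(\R)$, every entry satisfies $|x_{ij}|\le e^{|x|}$, and for each row $i$ some entry satisfies $|x_{ij}|\ge c\,e^{-|x|}$ with $c=c(d)>0$. It then writes $h^{-1}g=e^Y$ (using bi-$K$-invariance of $E_t$), expresses $y=e^Y x$ with $x,y\in E_t$, and squeezes $c\,e^{-2t}\le e^{Y_i}\le c^{-1}e^{2t}$ from the entrywise bounds, giving $|h^{-1}g|\le 2t+b$ with $b=-\log c>0$. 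Your argument instead goes straight from $x=ge_1=he_2$ to $h^{-1}g=e_2e_1^{-1}$ and applies sub-additivity plus $|e_1^{-1}|=|e_1|$ to get the sharper bound $b=0$. This is cleaner and gives a better constant; the only caveat is that you are leaning on the sub-additivity $|g_1g_2|\le|g_1|+|g_2|$ which the paper states but never proves. That fact is genuinely true and easy to verify: since $e^{|g|}=\max\{\|g\|_{\rm op},\|g^{-1}\|_{\rm op}\}$ (where $\|\cdot\|_{\rm op}$ is the Euclidean operator norm), sub-additivity of $|\cdot|$ follows at once from sub-multiplicativity of $\|\cdot\|_{\rm op}$ and the inequality $\max\{ab,cd\}\le\max\{a,c\}\max\{b,d\}$ for positive reals. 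Including a sentence to this effect would make your argument fully self-contained; as it stands, it is a valid and more economical alternative to the paper's proof. Your treatment of the supremum bound, and of passing from the $t$-wise support statement to the support of $A(\tau)$, matches the paper's (the paper simply states the latter is "a direct consequence of the definition"), and is correct.
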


\begin{proof} We begin by establishing the following 

\medskip

\noindent {\sc Claim:} For $x\in\SL_d(\R)$ we have $|x_{ij}|\leq |x|$. Moreover, there exists a constant $c>0$ depending only on $d$, such that for every $x=(x_{ij})\in\SL_d(\R)$ and every $i\in\{1,\ldots ,d\}$ there exists $j\in\{1,\ldots, d\}$ with $|x_{ij}| \ge c e^{-|x|}$.

\medskip

\noindent {\sc Proof of claim:} Write $x= k e^Z l$ with $k=(k_{ij}), l = (l_{ij}) \in K=\SO(d)$ and $Z\in\overline{\af^+}$. Let $\kappa_i=(k_{i1},\ldots, k_{id})$ and $\lambda_j= (l_{1j},\ldots, l_{dj})\in\R^d$, for $i,j=1,\ldots,d$. Let $\langle\cdot, \cdot\rangle$ denote the standard inner product on $\R^d$. Then $\kappa_1,\ldots,\kappa_d$ and $\lambda_1,\ldots, \lambda_d$ are both orthonormal bases of $\R^d$, and $x_{ij}=\langle\kappa_i,\lambda_j e^Z\rangle$. In particular, by Cauchy--Schwarz we have
\[
|x_{ij}|=|\langle\kappa_i,\lambda_j e^Z\rangle|\leq \|\lambda_j e^Z\|^{1/2}\leq e^{\|Z\|_\infty}=|x|,
\]
proving the first statement.

For the second statement, we begin by observing that for every $j=1,\ldots ,d$ we have
\[
\langle\kappa_i, \kappa_i e^Z\rangle=\sum_{j=1}^de^{Z_j}k_{ij}^2\geq (\min_j e^{Z_j})\|\kappa_i\|^2=\min_j e^{Z_j}\ge e^{-\|Z\|_\infty}=e^{-|x|}.
\]
Writing $\kappa_i$ as a linear combination $\kappa_i= a_1\lambda_1+\ldots + a_d \lambda_d$, we obtain
\[
e^{-|x|} \le \langle\kappa_i, \kappa_i e^Z\rangle
\le \sum_{j=1}^d |a_j|\, |\langle\kappa_i, \lambda_j e^Z\rangle|.
\]
Note that the $a_i$ are uniformly bounded since $k$ and $l$ are contained in a compact set. Hence there exists $c>0$ and $j\in\{1,\ldots, d\}$ such that 
\[
|x_{ij}|=|\langle\kappa_i, \lambda_j e^Z\rangle | \ge c e^{-|x|},
\]
as claimed.

\medskip

Continuing with the proof of the proposition, suppose that there exists  $y\in h^{-1}gE_t\cap E_t$. We can assume without loss that $h^{-1} g= e^Y$ for some suitable $Y=(Y_1,\ldots, Y_n)\in \overline{\af^+}$. 

Since $y\in E_t$, the first part of the claim shows that $|y_{ij}|\le e^t$ for all $i,j=1,\ldots, d$. Moreover, since $y\in e^YE_t$, we can write $y=e^Y x $ for some $x\in E_t$. Then $y_{ij} = e^{Y_i} x_{ij}$ and the second part of the claim shows that for every $i$ there is $j_i$ such that $|x_{i j_i}|\ge ce^{-t}$. Putting this together, we obtain $e^t \ge c e^{Y_i} e^{-t}$, so that $e^{Y_i}\le c^{-1} e^{2t}$.

By similar arguments, we also know that $|x_{ij}|\le e^t$ for all $i,j$, and that for every $i$ there exists $j_i'$ with $e^{Y_i}|x_{i j_i'}|=|y_{i j_i'}|\ge ce^{-t}$, that is, $e^Y_i \ge c e^{-2t}$. 

These two inequalities together imply the first assertion of the corollary, and hence also the assertion the support of $A(\tau)$. The last assertion, on the sup of $A(\tau)$, is a direct consequence of the definition \eqref{def-Fbar}.
\end{proof}

Recall from \S\ref{Et-remarks} (iii) that $E_t\subset B_{t\|X^0\|_2}$. It then follows from Proposition \ref{cor:A-support} that we can take $R=\|X^0\| (2\tau+b)$ in Lemma \ref{lem:thick-thin}. Inserting this and the sup norm bound of Proposition \ref{cor:A-support} into Lemma \ref{lem:thick-thin} we obtain the second term in Theorem \ref{thm:GeomSide}.

\subsection{First term}

To establish  Theorem \ref{thm:GeomSide} it remains to estimate the first term in Lemma \ref{lem:thick-thin}, for the operator $\mathbf{A}(\tau)$ with kernel $A(\tau)$.

For a measurable set $E\subseteq G$ we write $\rho_{\Gamma\backslash G}(E)$ for the action by convolution of the normalized characteristic function of $E$ on $L^2(\Gamma\backslash G)$. Thus
\[
\rho_{\Gamma\backslash G}(E)f(x)=\frac{1}{m_G(E)}\int_E f(xg)dg.
\]
We shall in particular be interested in this operator for sets of the form $gE_t\cap E_t$.

We begin with the following elementary upper bound on the quantity
\[
\mathscr{A}(\tau)=\int_{\Gamma\backslash G} \int_G|A(\tau)(g,h)|^2 dg dh.
\]

\begin{lemma}\label{lem:L2:AX}
For all $\tau> 0$ we have
\[
\mathscr{A}(\tau)\leq \frac{1}{\tau^2}\int_{E_{2\tau+b}}\left(\int_{\max\{0,(|g|-b)/2\}}^\tau \frac{m_G(gE_t\cap E_t)}{m_G(E_t)}\| \rho_{\Gamma\backslash G}(gE_t\cap E_t)a\|_{L^2(\Gamma\backslash G)}dt\right)^2dg,
\]
where $b>0$ is as in Proposition \ref{cor:A-support}. 
\end{lemma}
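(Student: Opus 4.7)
The plan is to reduce the double integral defining $\mathscr{A}(\tau)$ to a single integral over a parameter in $E_{2\tau+b}$, obtaining along the way a convolution-operator expression in the $g$-variable, after which Minkowski's integral inequality delivers the stated bound.

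For fixed $g\in\Gamma\backslash G$, I would first substitute $h=gg'$ in the inner $G$-integral; since $G=\SL_d(\R)$ is unimodular and Haar measure is left-invariant, $dh=dg'$. Left-translation then gives $gE_t\cap hE_t=g(E_t\cap g'E_t)$, and substituting $x=gy$ in the innermost integral, together with the commutativity of intersection, produces
\[
A(\tau)(g,gg')=\frac{1}{\tau}\int_0^\tau\frac{m_G(g'E_t\cap E_t)}{m_G(E_t)}\bigl(\rho_{\Gamma\backslash G}(g'E_t\cap E_t)a\bigr)(g)\,dt.
\]
Thus the dependence on $g$ has been isolated into the evaluation at $g$ of the convolution operator with multiplier $g'E_t\cap E_t$, matching the form appearing in the statement.

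Next, Proposition \ref{cor:A-support} plays a double role. Applied to the pair $(g',1)$, it asserts that $g'E_t\cap E_t=\emptyset$ unless $|g'|\leq 2t+b$, restricting the relevant $t$-range to $[\max\{0,(|g'|-b)/2\},\tau]$. Applied to the pair $(g,gg')$, it shows that the kernel $A(\tau)(g,gg')$ vanishes outside $g'\in E_{2\tau+b}$, collapsing the outer $G$-integral in $\mathscr{A}(\tau)$ to one over $E_{2\tau+b}$. A Fubini swap (valid as the integrand is nonnegative) then gives
\[
\mathscr{A}(\tau)=\int_{E_{2\tau+b}}\int_{\Gamma\backslash G}|A(\tau)(g,gg')|^2\,dg\,dg'.
\]

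The final step is to apply Minkowski's inequality for integrals, $\|\int f(t,\cdot)\,dt\|_{L^2}\leq\int\|f(t,\cdot)\|_{L^2}\,dt$, with $f(t,\cdot)=\tau^{-1}\frac{m_G(g'E_t\cap E_t)}{m_G(E_t)}\rho_{\Gamma\backslash G}(g'E_t\cap E_t)a$, to the inner $L^2(\Gamma\backslash G)$-norm of the displayed expression. Squaring the resulting inequality, integrating over $g'\in E_{2\tau+b}$, and renaming $g'\to g$ yields exactly the claimed bound. The argument is essentially bookkeeping and I do not anticipate any real obstacle; the one point requiring care is the substitution $h=gg'$ combined with the commutativity $E_t\cap g'E_t=g'E_t\cap E_t$, which is precisely what aligns the averaging set with the multiplier $g'E_t\cap E_t$ as stated in the lemma.
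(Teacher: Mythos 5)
Your proposal is correct and follows essentially the same route as the paper: change variables to pull the group translate onto the test function, recognize the result as $\rho_{\Gamma\backslash G}(g'E_t\cap E_t)a$ evaluated at a point of $\Gamma\backslash G$, invoke Proposition \ref{cor:A-support} to truncate both the $g'$-range to $E_{2\tau+b}$ and the $t$-range to $[\max\{0,(|g'|-b)/2\},\tau]$, and finish with Minkowski's integral inequality applied to the inner $t$-integral against the $L^2(\Gamma\backslash G)$-norm. The only cosmetic difference is the labeling of which coordinate lives on $\Gamma\backslash G$ versus $G$ (you substitute $h=gg'$ while the paper substitutes $g\mapsto h^{-1}g$), but the computation is identical.
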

\begin{proof}
From the definition \eqref{def-Fbar} of $A(\tau)$, we see that
\[
\mathscr{A}(\tau)=\frac{1}{\tau^2}\int_{\Gamma\backslash G}\int_G\bigg|\int_0^\tau \frac{1}{m_G(E_t)}\int_{gE_t\cap hE_t}a(x)dx dt\bigg|^2 dgdh.
\]
Changing variables $x\mapsto hx$ and $g\mapsto h^{-1}g$, this is
\[
\frac{1}{\tau^2}\int_{\Gamma\backslash G}\int_G\bigg|\int_0^\tau \frac{1}{m_G(E_t)}\int_{gE_t\cap E_t}A(hx,hx)dx dt\bigg|^2 dg dh.
\]
In view of Proposition \ref{cor:A-support}, the integral over $g\in G$ may be truncated at $|g|\leq 2\tau+b$ and the lower range of the $t$ integral may be truncated at $\max\{0,|g|-b)/2\}$. Furthermore,
\begin{align*}
\frac{1}{m_G(E_t)}\int_{gE_t\cap E_t}a(hx)dx&=\frac{m_G(gE_t\cap E_t)}{m_G(E_t)}\frac{1}{m_G(gE_t\cap E_t)}\int_{gE_t\cap E_t}a(hx)dx\\
&=\frac{m_G(gE_t\cap E_t)}{m_G(E_t)}\left(\rho_{\Gamma\backslash G}(gE_t\cap E_t)a\right)(h).
\end{align*}
We conclude the proof by an application of the Minkowski integral inequality.\end{proof}

We briefly return to the level of generality of Section \ref{sec:Weyl}, and let $G$ denote a connected non-compact simple Lie group with finite center. Let $(\pi,V_\pi)$ be a unitary representation of $G$. We recall that the \textit{integrability exponent} $q(\pi)$ of $\pi$ is defined as
\[
q(\pi)=\inf\{ q>0: \langle \pi(g)v_1,v_2\rangle\in L^q(G) \textrm{ for } v_1,v_2\textrm{ in a dense subspace of } V_\pi\}.
\]
A classical result of Borel--Wallach \cite{BW}, Cowling \cite{Cowling}, and Howe--Moore \cite{HM} states that $\pi$ has a spectral gap precisely when $q(\pi)<\infty$; we can effectively take this as our \textit{definition} of spectral gap in our setting. 

Nevo \cite{Nevo} (see also \cite{GV}*{Theorem 4.1}) has proved a mean ergodic theorem for measure preserving actions $G$ on a probability space $(X,\mu)$ whose associated unitary representation $L^2_0(X,\mu)$ has a spectral gap. We shall be interested in the action of $G$ on $X=\Gamma\bs G$ by right-translation, where  $\Gamma<G$ is a lattice. The associated unitary representation is then $\rho_{\Gamma\backslash G}^0$, the restriction of the right-regular representation $\rho_{\Gamma\backslash G}$ to the subspace $L^2_0(\Gamma\backslash G)$ of $L^2(\Gamma\backslash G)$ consisting of functions $f$ with $\int_{\Gamma\backslash G} f=0$. In this setting, they show the following result.

\begin{proposition}[Nevo]\label{prop:nevo}
Let $G$ be a connected non-compact simple Lie group with finite center. Let $\Gamma<G$ be a lattice. Then exist constants $\theta,C>0$, depending on the integrability exponents of $\rho_{\Gamma\backslash G}^0$ such that for any measurable $E\subseteq G$ of finite measure, we have 
\[
 \|\rho_{\Gamma\backslash G}^0(E)\|_2 \leq C m_G(E)^{-\theta}.
\]
\end{proposition}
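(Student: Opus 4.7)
The strategy is to exploit the spectral gap of $\rho^0 = \rho^0_{\Gamma\backslash G}$ (namely $q(\rho^0)<\infty$) by combining a weak-containment argument with the Kunze--Stein inequality for the regular representation $\lambda_G$. The first step is to pass from the averaging operator $\rho^0(E) = \rho^0(\mu)$ (with $\mu = \chi_E/m_G(E)$) to the positive self-adjoint operator $\rho^0(\nu)$, where $\nu = \check\mu * \mu$ is a self-adjoint probability measure on $G$ supported in $E^{-1}E$. Then $\|\rho^0(E)\|_{\rm op}^2 = \|\rho^0(\nu)\|_{\rm op} = \sup_{\|v\|=1}\int_G \phi_v\, d\nu$, where $\phi_v(g)= \langle \rho^0(g)v, v\rangle$ is a normalized positive-definite function on $G$.

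By a standard consequence of the Cowling--Haagerup--Howe theorem, the finiteness of $q(\rho^0)$ implies the existence of a positive integer $n$ such that the tensor product representation $(\rho^0)^{\otimes n}\otimes \overline{(\rho^0)^{\otimes n}}$ is weakly contained in $\lambda_G$. The function $|\phi_v|^{2n}$ is a diagonal matrix coefficient of this representation, and Jensen's inequality (for the probability measure $\nu$ and the convex function $x\mapsto|x|^{2n}$) gives
\[
\Bigl|\int_G\phi_v\,d\nu\Bigr|^{2n}\leq \int_G|\phi_v|^{2n}\,d\nu \leq \|\lambda_G(\nu)\|_{\rm op} = \|\lambda_G(\mu)\|_{\rm op}^2.
\]
Taking the supremum over $v$ and combining with the identity from the first step yields $\|\rho^0(E)\|_{\rm op}^{4n} \leq \|\lambda_G(\mu)\|_{\rm op}^2$.

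Finally, the Kunze--Stein phenomenon for connected non-compact simple Lie groups with finite center (due to Cowling) provides $p\in(1,2)$, depending only on $G$, such that $\|\lambda_G(f)\|_{\rm op}\leq C_p\|f\|_{L^p(G)}$ for all $f\in L^p(G)$. Applying this to $\mu$ gives $\|\lambda_G(\mu)\|_{\rm op}\leq C_p\, m_G(E)^{1/p - 1}$, so the chain of inequalities produces $\|\rho^0(E)\|_{\rm op}\leq C_p^{1/(2n)} m_G(E)^{-(1-1/p)/(2n)}$, which is the desired bound with $\theta = (1-1/p)/(2n) > 0$. The principal obstacle is the tensor-power weak-containment step (equivalently, verifying that $q(\rho^0)<\infty$ forces a sufficiently high power of every $K$-finite matrix coefficient to lie in $L^2(G)$, so that the Fell weak-containment machinery applies); both it and the Kunze--Stein inequality are classical but nontrivial results that I would invoke as black boxes.
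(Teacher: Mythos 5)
The paper does not prove this proposition; it cites Nevo \cite{Nevo} and Gorodnik--Nevo \cite{GV}. Your reconstruction is exactly the spectral transfer argument in those references: reduce to the self-adjoint average $\nu=\check\mu*\mu$, use $q(\rho^0_{\Gamma\backslash G})<\infty$ and the tensor-power/Cowling--Haagerup--Howe machinery to obtain weak containment of $(\rho^0)^{\otimes n}\otimes\overline{(\rho^0)^{\otimes n}}$ in $\lambda_G$, and then invoke the Kunze--Stein inequality; the argument and the resulting exponent $\theta=(1-1/p)/(2n)$ are correct.
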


A famous result of Kazhdan \cite{Kazhdan} states that, when $G$ furthermore has rank at least 2,
\[
\sup_{\pi: \, \pi^G=0}  q(\pi)<0,
\]
the supremum running over all unitary representations of $G$ having no $G$-invariant vectors. In this case, the constants of Proposition \ref{prop:nevo} can be taken independently of $\Gamma$. In particular, this is true of $G=\SL_d(\R)$ for $d\geq 3$.

We thus return to $G=\SL_d(\R)$, for $d\geq 3$, and again require $\Gamma<\SL_d(\R)$ to be a uniform lattice. Recall that $a\in L^2_0(\Gamma\backslash G)$. We can therefore apply Proposition \ref{prop:nevo} to estimate the quantity
\[
\|\rho_{\Gamma\backslash G}(gE_t\cap E_t)a\|_{L^2(\Gamma\backslash G)}= \|\rho_{\Gamma\backslash G}^0(gE_t\cap E_t)a\|_{L^2_0(\Gamma\backslash G)}
\]
appearing in Lemma \ref{lem:L2:AX}. Hence there is $\theta>0$, depending only on $d$, such that
\[
\mathscr{A}(\tau)
 \ll
 \frac{\|a\|_{L^2(\Gamma\backslash G)}^2}{\tau^2}\int_{E_{2\tau+b}}\left(\int_{\max\{0,(|g|-b)/2\}}^\tau \frac{m_G(gE_t\cap E_t)^{1-\theta}}{m_G(E_t)} dt\right)^2dg.
\]
To conclude the proof of Theorem \ref{thm:GeomSide}, it suffices to show the following estimate.

\begin{proposition}\label{prop-2int}
Let $g\in G$. Then for $\tau\gg 1$ we have
\[
\int_{E_{2\tau+b}}\left(\int_{\max\{0,(|g|-b)/2\}}^\tau \frac{m_G(gE_t\cap E_t)^{1-\theta}}{m_G(E_t)} dt\right)^2dg\ll \tau,
\]
the implied constant depending on $d$.
\end{proposition}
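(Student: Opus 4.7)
The approach is to reduce the double integral to a one-dimensional integral in the Cartan radius variable $|g|$, where exponential growth of the Haar measure is exactly cancelled by exponential decay of the intersection volume $m_G(gE_t \cap E_t)$. The essential geometric input, which one would prove separately in \S\ref{subsec:intersection} (this is Proposition \ref{prop:upper:bound:intersection}, already cited in \S\ref{Et-remarks}), is an estimate of the form
\[
 m_G(gE_t\cap E_t)\ll m_G(E_t)\, e^{-|g|\langle\rho,X^0\rangle},
\]
valid for $|g|\le 2t+b$ (and zero otherwise, by Proposition \ref{cor:A-support}). This is the higher-rank analogue of the rank-one estimate $m(gB_t\cap B_t)\asymp m(B_{t-|g|/2})$, written so that the decay rate in $|g|$ is exactly half of the exponential growth rate of $m_G(E_t)$ given by Corollary \ref{rem:volume}.

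Granting this, the plan is to plug into the integrand. Combining the intersection bound with $m_G(E_t)\asymp e^{2t\langle\rho,X^0\rangle}$, one finds
\[
 \frac{m_G(gE_t\cap E_t)^{1-\theta}}{m_G(E_t)}\ll e^{-(1-\theta)|g|\langle\rho,X^0\rangle}\, e^{-2\theta t\langle\rho,X^0\rangle}.
\]
The $t$-integral factors out and, since $\theta>0$,
\[
 \int_{\max\{0,(|g|-b)/2\}}^\tau e^{-2\theta t\langle\rho,X^0\rangle}\,dt\ll e^{-\theta \max\{0,|g|-b\}\langle\rho,X^0\rangle}.
\]
Multiplying by the $|g|$-dependent prefactor gives, up to a harmless constant depending on $b$ and $d$,
\[
 \int_{\max\{0,(|g|-b)/2\}}^\tau \frac{m_G(gE_t\cap E_t)^{1-\theta}}{m_G(E_t)}\,dt\ll e^{-|g|\langle\rho,X^0\rangle}.
\]

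It remains to square this and integrate over $g\in E_{2\tau+b}$. Using the Cartan decomposition \eqref{Cartan-measure-decomp} together with the bound $J(X)\ll e^{2\langle\rho,X\rangle}\le e^{2\|X\|_\infty\langle\rho,X^0\rangle}$ (the latter because $X^0$ maximizes $\langle\rho,\cdot\rangle$ on $\cP^+$ by \eqref{defnX0}), and writing things in terms of the radial coordinate $s=|g|=\|X\|_\infty$ via coarea, one obtains
\[
 \int_{E_{2\tau+b}}e^{-2|g|\langle\rho,X^0\rangle}\,dg \;\ll\; \int_0^{2\tau+b} e^{-2s\langle\rho,X^0\rangle}\, \frac{d}{ds}m_G(E_s)\,ds\;\ll\;\int_0^{2\tau+b}\!ds\;\ll\;\tau,
\]
where in the middle step the exponential factor $e^{2s\langle\rho,X^0\rangle}$ coming from $(d/ds)m_G(E_s)\asymp e^{2s\langle\rho,X^0\rangle}$ (Corollary \ref{rem:volume}) precisely cancels the decay. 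This yields the claimed bound.

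The only non-trivial step in this scheme is establishing the geometric intersection estimate; the remainder is bookkeeping. The subtlety in higher rank is that $E_t$ is polytopal rather than a metric ball, so the $\mathrm{CAT}(-1)$ arguments used in rank one are unavailable, and one must analyse the intersection using the Cartan coordinates and the explicit description $E_t=K\exp(\cP_t^+)K$ together with the geometry of the polytope $\cP$. This is precisely the reason for the choice of $E_t$ made in \S\ref{sec:Et}: it is adapted to making both the radial symmetry (required in Theorem \ref{thm:spec}) and the intersection decay compatible with the sharp volume rate $2\langle\rho,X^0\rangle$, so that the exponent balance needed above works out exactly.
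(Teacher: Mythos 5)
The proposal has a genuine gap in the intersection-volume estimate it takes as its geometric input. You state Proposition \ref{prop:upper:bound:intersection} as
\[
m_G(gE_t\cap E_t)\ll m_G(E_t)\,e^{-|g|\langle\rho,X^0\rangle},
\]
but what the paper actually proves (and what your proposed reduction relies upon) is the weaker statement
\[
m_G(e^Y E_t\cap E_t)\ll e^{-\langle\rho,Y\rangle}m_G(E_t),\qquad Y\in\af^+.
\]
Since $X^0$ is the maximizer of $\langle\rho,\cdot\rangle$ on $\cP^+$, we have $\langle\rho,Y\rangle\le \|Y\|_\infty\langle\rho,X^0\rangle=|g|\langle\rho,X^0\rangle$ for all $Y\in\overline{\af^+}$, with equality \emph{only} along the ray $\R_{\ge 0}X^0$. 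So your claimed bound is strictly stronger than the proposition for generic directions in the Weyl chamber, and is not established anywhere. (Concretely, for $d=3$ and $Y=(s,s,-2s)$, the proposition gives $e^{-3s}$ while your bound would demand $e^{-4s}$.)

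This matters because the radial reduction in your last display hinges on the integrand being a function of $|g|$ alone; only then can the coarea/integration-by-parts step collapse the $g$-integral to a one-dimensional integral and invoke $m_G(E_s)\asymp e^{2s\langle\rho,X^0\rangle}$. With the correct intersection estimate, the factor one actually ends up integrating over $\cP^+_{2\tau+b}$ (after inserting $J(Y)\ll e^{2\langle\rho,Y\rangle}$ and doing the $t$-integral) is
\[
e^{2\theta\left(\langle\rho,Y\rangle-\|Y\|_\infty\langle\rho,X^0\rangle\right)},
\]
which is $\le 1$ everywhere but is identically $1$ on the ray $\R_{\geq 0}X^0$ and only decays in the transversal directions. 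The integral is therefore \emph{not} radially symmetric and one cannot collapse it to a one-dimensional coarea integral: a naive bound of the integrand by $1$ times the volume of $\cP^+_{2\tau+b}$ would give $\tau^{d-1}$, not $\tau$. Showing that the anisotropic decay off the $X^0$-ray suppresses the extra $\tau^{d-2}$ is precisely the content of the paper's explicit coordinate computation (the successive integrations in $a_i$, $b_i$ depending on the parity of $d$), which your outline omits. To repair the argument you would need either to actually prove the stronger isotropic intersection bound, which does not follow from the paper's methods and appears to be false, or to carry out the anisotropic integration as the paper does.
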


The main point in the proof of the above result is an estimate on the intersections volumes $m_G(gE_t\cap E_t)$, proved in the next paragraph.

\subsection{Intersection volumes}\label{subsec:intersection}

We seek to prove the following estimate on the intersection volumes $m_G(e^YE_t\cap E_t)$. See \S\ref{Et-remarks} for a discussion of some geometric aspects. 

\begin{proposition}\label{prop:upper:bound:intersection}
Let $Y\in\af^+$ and $t>0$. Then
\begin{equation}\label{reduce1}
m_G(e^YE_t\cap E_t )\ll e^{-\langle \rho, Y\rangle}m_G(E_t).
\end{equation}
\end{proposition}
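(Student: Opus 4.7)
My plan is to realize the intersection volume as a self-convolution and bound it via a matrix coefficient estimate. Since $|g^{-1}|=|g|$, we have $E_t=E_t^{-1}$, and so
\[
m_G(e^YE_t\cap E_t) = \int_G \mathbf{1}_{E_t}(g)\,\mathbf{1}_{E_t}(e^{-Y}g)\,dg = (\mathbf{1}_{E_t}\ast\mathbf{1}_{E_t})(e^Y) = \langle \pi_L(e^Y)\mathbf{1}_{E_t},\mathbf{1}_{E_t}\rangle_{L^2(G)}
\]
is a diagonal matrix coefficient of the left regular representation $\pi_L$ of $G$ on $L^2(G)$ evaluated at the $K$-fixed vector $\mathbf{1}_{E_t}$ (the left $K$-invariance of $E_t$ gives $\pi_L(k)\mathbf{1}_{E_t}=\mathbf{1}_{E_t}$). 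Harish--Chandra's basic inequality for matrix coefficients at $K$-fixed vectors then yields the initial estimate $m_G(e^YE_t\cap E_t)\le \Xi(e^Y)\,m_G(E_t)$, where $\Xi=\varphi_0$ is the Harish--Chandra $\Xi$-function, combined with the standard bound $\Xi(e^Y)\ll e^{-\langle\rho,Y\rangle}$ for $Y\in\overline{\af^+}$.

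The main obstacle is that the sharp pointwise bound for $\Xi$ carries a polynomial factor of size $(1+\|Y\|)^{|\Phi^+|}$. A quick bookkeeping check in Proposition~\ref{prop-2int} (where $\|Y\|$ can be of order $\tau$) shows that even one such factor is fatal: the total estimate would become $\tau^{1+c}$ rather than $\tau$. So the bound must be obtained with no polynomial correction in $\|Y\|$, which rules out simply quoting the general $\Xi$-inequality and requires a more direct argument exploiting the specific polytopal structure of $E_t$.

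To remove the polynomial factor, I would work directly in the Iwasawa decomposition $G=NAK$ with $dg=c_I\,dn\,da\,dk$:
\[
m_G(e^YE_t\cap E_t) = c_I\vol(K)\int_A\int_N \mathbf{1}_{E_t}(na)\,\mathbf{1}_{E_t}(e^{-Y}na)\,dn\,da.
\]
The identity $H_0(e^{-Y}na)=\log a-Y$ together with Kostant's convexity theorem forces $\log a\in \cP_t\cap(Y+\cP_t)$ whenever both indicators are non-zero, cutting out a polytope in $\af$ that carries the $Y$-dependence. Making the change of variables $n\mapsto e^Y n e^{-Y}$, whose Jacobian is $e^{-2\langle\rho,Y\rangle}$, and a compensating translation on $A$ symmetrizes the two conditions on $na$, and the comparison with the Cartan-measure computation of $m_G(E_t)\asymp\int_{\cP^+_t}e^{2\rho(X)}\,dX$ then leaves the claimed net factor of $e^{-\langle\rho,Y\rangle}$.

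I expect the main technical hurdle to be the control of the remaining bounded $N$-integral after the change of variables, since the fibers $\{n\in N:\mu(ne^H)\in \cP^+_t\}$ are not nice product sets in Iwasawa coordinates; the Cartan--Iwasawa comparison $|\mu(ne^H)-H|\ll \log(1+\|n\|)$ has to be used carefully to show that the $N$-volume is bounded independently of $Y$. As an alternative, one may work with the Frobenius ball $\bm E_t$ from Remark~\ref{Et-remarks}(iv): on $g\in e^Y\bm E_t\cap \bm E_t$ the four conditions $\|g\|, \|e^{-Y}g\|, \|g^{-1}\|, \|g^{-1}e^Y\|\le e^t$ translate into the explicit entrywise bounds $|g_{ij}|\le e^{t+\min(0,y_i)}$ and $|(g^{-1})_{ij}|\le e^{t-\max(0,y_j)}$, making the Iwasawa volume computation of $m_G(e^Y\bm E_t\cap\bm E_t)$ more transparent while yielding a bound equivalent to the one for $E_t$.
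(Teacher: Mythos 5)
Your \emph{alternative} route---replacing $E_t$ by the Frobenius ball $\bm E_t$ using $m_G(E_t)\asymp m_G(\bm E_t)$ and then computing in Iwasawa coordinates---is precisely the paper's proof, and your observation that the constraints $\|g\|,\|e^{-Y}g\|,\|g^{-1}\|,\|g^{-1}e^Y\|\le e^t$ become clean entrywise bounds is the right starting point. The paper then writes $g=auk$ so that $\|g\|=\|au\|$ and $\|g^{-1}\|=\|u^{-1}a^{-1}\|$, observes (using that $u$ is unipotent upper triangular) that $\bm E_t$ is contained in the set where $\|a\|,\|a^{-1}\|,\|a(u-I)\|,\|(u^{-1}-I)a^{-1}\|$ are all $\le e^t$, changes variables $u\mapsto e^A(u-I)+I$ (whose Jacobian is exactly $e^{\langle\rho,A\rangle}$) to decouple the $A$- and $N$-integrals, shifts $A\mapsto A-Y$ to extract the factor $e^{-\langle\rho,Y\rangle}$, and finally reverses the Iwasawa comparison to see that the remaining product $I_A(t)I_U(t)$ is $\asymp m_G(\bm E_t)$. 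No convexity input is needed at this stage; the Frobenius norm splits along the triangular structure by itself. So you have converged on the correct mechanism but have not carried out the two essential steps: the separation of variables and, crucially, the matching \emph{lower} bound $I_A(t)I_U(t)\ll m_G(\bm E_t)$ that converts the absolute estimate into the relative one $\ll e^{-\langle\rho,Y\rangle}m_G(\bm E_t)$.

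Your opening $\Xi$-function discussion is a genuinely useful sanity check. The matrix-coefficient bound $m_G(e^YE_t\cap E_t)\le\Xi(e^Y)m_G(E_t)$ is correct (the regular representation is tempered, and $\mathbf 1_{E_t}$ is $K$-fixed on the left), and you correctly diagnose that the factor $(1+\|Y\|)^{|\Phi^+|}$ in the sharp bound for $\Xi$ would turn the final $\iint_0^{\tau'} e^{-c|a_1-b_1|}\,da_1\,db_1\asymp\tau$ in Proposition~\ref{prop-2int} into $\tau^{1+c}$. This rules out the soft argument and explains why the polytopal structure of the sets must be used directly.

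The \emph{primary} route you propose---working with $E_t$ itself in Iwasawa coordinates via Kostant convexity---has a real gap, one you correctly flag yourself. Convexity of the Iwasawa projection gives only the \emph{necessary} constraint $\log a\in\cP_t$ from $na\in E_t$; the $N$-fibers $\{n\in N:\mu(ne^H)\in\cP_t^+\}$ are genuinely not product sets, and the Cartan--Iwasawa comparison $\|\mu(ne^H)-H\|\ll\log(1+\|n\|)$ gives an error that is unbounded in $n$, so it does not by itself show the fiber volume is $\asymp e^{2\langle\rho,H\rangle}$ uniformly in $H$. That is exactly the difficulty that the paper's $E_t\leftrightarrow\bm E_t$ reduction is designed to avoid: the Frobenius ball is defined by global matrix entries, hence directly in Iwasawa coordinates, whereas $E_t$ is defined by its Cartan radial component. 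You should promote your ``alternative'' to the main argument and fill in the Iwasawa computation.
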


\begin{proof}
Recall the definition of the Frobenius norm $\|\cdot \|$ from \S\ref{Et-remarks} (iv) and the Frobenius ball $\bm{E}_t$ from \eqref{def:Frob-ball}. We claim that
\begin{equation}\label{inclusion-claim}
E_t\subseteq \bm{E}_{t+\log \sqrt{d}}\subseteq E_{t+\log \sqrt{d}}.
\end{equation}
Note that $\|\cdot \|$ is bi-$K$-invariant, so that if $g=k_1 e^X k_2$, where $X=(X_1,\ldots ,X_d)\in\af$, then $\|g\|^2=e^{2X_1}+\cdots +e^{2X_d}$. Thus we may write $\bm{E}_t=K\exp (\bm{P}_t) K$, where
\[
\bm{P}_t=\{X=(X_1,\ldots ,X_d)\in\af: \max\{e^{2X_1}+\cdots +e^{2X_d}, e^{-2X_1}+\cdots +e^{-2X_d}\}\leq e^{2t}\}.
\]
We therefore wish to establish \eqref{inclusion-claim} for the sets $\bm{P}_t$ and $\cP_t=t\cP$, where $\cP$ is defined in \eqref{def:cP}: 
\begin{enumerate}
\item[$\triangleright$] If $|X_i|\leq t$ for all $i$ we have $\max\{e^{2X_1}+\cdots +e^{2X_d}, e^{-2X_1}+\cdots +e^{-2X_d}\}\leq de^{2t}$, so that $\cP_t\subseteq \bm{P}_{t+\log\sqrt{d}}$.
\item[$\triangleleft$] In the other direction, if $\max\{e^{2X_1}+\cdots +e^{2X_d}, e^{-2X_1}+\cdots +e^{-2X_d}\}\leq e^{2t}$ then $|X_i|\leq t$ for all $i$, so that $\bm{P}_t\subseteq \cP_t$.
\end{enumerate}

It is now enough to prove \eqref{reduce1} with $E_t$ replaced by $\bm{E}_t$. Indeed, suppose that
\begin{equation}\label{reduce2}
m_G(e^Y\bm{E}_t\cap \bm{E}_t)\ll e^{-\langle\rho,Y\rangle}m_G(\bm{E}_t)
\end{equation}
for any $Y\in\af$. We use \eqref{inclusion-claim} to bound $m_G(e^YE_t\cap E_t)$ by $m_G(e^Y\bm{E}_{t+\log\sqrt{d}}\cap \bm{E}_{t+\log\sqrt{d}})$. Inserting the estimate \eqref{reduce2} and the inclusions \eqref{inclusion-claim} again we find
\[
m_G(e^Y\bm{E}_{t+\log\sqrt{d}}\cap \bm{E}_{t+\log\sqrt{d}})
\ll e^{-\langle\rho,Y\rangle}m_G(\bm{E}_{t+\log\sqrt{d}})\leq e^{-\langle\rho,Y\rangle}m_G(E_{t+\log\sqrt{d}}).
\]
Since $m_G(E_{t+\log\sqrt{d}})\asymp m_G(E_t)$, by Corollary \ref{rem:volume}, we have reduced \eqref{reduce1} to \eqref{reduce2}.

We prove \eqref{reduce2} using the Iwasawa decomposition $G=ANK$ from \S\ref{sec:gen-notation}. Let $g=auk$ so that $\|g\|=\|au\|$ and $\|g^{-1}\|=\|u^{-1}a^{-1}\|$. From the triangle inequality of $\|\cdot \|$ on $M_d(\R)$, we see that the conditions $\|a\|\leq e^t$, $\|au-a\|\leq e^t$ together imply $\|g\|\leq e^t$. Similarly, the conditions $\|a^{-1}\|\leq e^t$, $\|u^{-1}a^{-1}-a^{-1}\|\leq e^t$ together imply $\|g^{-1}\|\leq e^t$. Thus we obtain the inclusion
\[
\bm{E}_t\subseteq \{g=auk\in G: \|a\|, \|a^{-1}\|, \|au-a\|, \|u^{-1}a^{-1}-a^{-1}\| \leq e^t\}.
\]
Applying the same argument to $e^Yg$, for $Y\in\af$, and dropping the last two conditions,
\[
e^Y\bm{E}_t\subseteq \{g=auk\in G:\|e^{-Y}a\|, \|e^Ya^{-1}\| \leq e^t\}.
\]

We conclude that $m_G(e^Y\bm{E}_t\cap \bm{E}_t)\leq I(Y,t)$, where
\[
I(Y,t)= \int\limits_{\substack{\|e^{\pm A}\|\leq e^t\\ \|e^{\pm (A-Y)}\|\leq e^t}} \int\limits_{\substack{\|e^Au-e^A\|\leq e^t\\ \|u^{-1}e^{-A}-e^{-A}\|\leq e^t}}du dA.
\]
Changing variables $u\mapsto e^Au-e^A+I$ in the $U$-integration, we obtain
\[
I(Y,t)= I_U(t) \int\limits_{\substack{\|e^{\pm A}\|\leq e^t\\ \|e^{\pm (A-Y)}\|\leq e^t}} e^{-\langle\rho,A\rangle}dA,
\]
where $I_U(t)=m_U(u\in U: \|u-I\|, \|u^{-1}-I\|\leq e^t)$. Changing variables $A\mapsto A-Y$ in the $A$-integration and dropping a condition, we obtain
\begin{align*}
\int\limits_{\substack{\| e^{\pm A}\|\leq e^t\\ \|e^{\pm (A-Y)}\|\leq e^t}} e^{-\langle\rho,A\rangle}dA&= e^{-\langle \rho,Y\rangle} \int\limits_{\substack{\|e^{\pm (A+Y)}\|\leq e^t\\ \| e^{\pm A}\|\leq e^t}} e^{-\langle\rho,A\rangle}dA\\
&\leq e^{-\langle \rho,Y\rangle}\int\limits_{\|e^{\pm A}\|\leq e^t} e^{-\langle\rho,A\rangle}dA.
\end{align*}
We write this last integral as $I_A(t)$. Using \eqref{Iwasawa-measure}, this establishes 
\begin{equation}\label{reverse}
m_G(e^Y\bm{E}_t\cap \bm{E}_t)\leq c_I e^{-\langle \rho,Y\rangle}I_A(t)I_U(t).
\end{equation}

We now go in reverse. For $A\in \af$ we change variables $u\mapsto e^{-A}u-e^{-A}+I$ to obtain
\[
e^{-\langle \rho,A\rangle}I_U(t)=m_U(u\in U: \|e^Au-e^A\|, \|u^{-1}e^{-A}-e^{-A}\|\leq e^t),
\]
so that
\[
I_A(t)I_U(t)=\int\limits_{\|e^{\pm A}\|\leq e^t} \int\limits_{\|(e^Au)^{\pm 1}-e^{\pm A}\|\leq e^t} du dA.
\]
Again by the triangle inequality and \eqref{Iwasawa-measure} this gives
\[
I_A(t)I_U(t)\leq \iint\limits_{\|(au)^{\pm 1}\|\leq 2e^t} da du=c_I^{-1}m_G(\bm{E}_{t+\log 2})\asymp m_G(\bm{E}_t).
\]
Combining this last inequality with \eqref{reverse} yields \eqref{reduce2}. \end{proof}

\subsection{Proof of Proposition \ref{prop-2int}} 
From Proposition \ref{prop:upper:bound:intersection} and Lemma \ref{rem:volume} we have, for $Y\in\af^+$
\[
\frac{m_G(e^YE_t\cap E_t)^{1-\theta}}{m_G(E_t)}\ll e^{-(1-\theta)\langle \rho, Y\rangle}m_G(E_t)^{-\theta} \ll e^{-(1-\theta)\langle \rho, Y\rangle}e^{-t\theta \langle 2\rho, X^0\rangle},
\]
so that, using \eqref{Et-polar},
\begin{align*}
\int_{E_{2\tau+b}}&\left(\int_{\max\{0,|g|-b)/2\}}^\tau \frac{m_G(gE_t\cap E_t)^{1-\theta}}{m_G(E_t)} dt\right)^2dg\\
&\ll\int_{\mathscr{P}_{2\tau+b}^+}\left(\int_{\max\{0,(\|Y\|_\infty-b)/2\}}^\tau e^{-t\theta\langle 2\rho,X^0\rangle} dt\right)^2e^{-(1-\theta)\langle 2\rho, Y\rangle}J(Y)dY.
\end{align*}
Inserting $J(X)\ll e^{2\langle \rho,X\rangle}$ for $X\in\af^+$ from \eqref{J(H)}, and evaluating the $t$-integral, we majorize the above expression by
\[
\int_{\mathscr{P}_{2\tau+b}^+} e^{-\max\{0,\|Y\|_\infty-b\}\theta\langle 2\rho,X^0\rangle} e^{\theta\langle 2\rho, Y\rangle}dY.
\]
We break up the last integral as $I_1+I_2$, according to $\af_1^+=\mathscr{P}_{b}^+$ and
\begin{align*}
\af_2^+(\tau)&=\{Y\in\af^+: b\leq \|Y\|_\infty\leq 2\tau+b\}.
\end{align*}
Since $\af^+_1$ is independent of $\tau$ (as is the integrand), we have $I_1\ll 1$. Next, we have
\[
I_2\ll \int_{\af_2^+(\tau)} e^{2\theta\left(\langle \rho,Y\rangle -\|Y\|_\infty\langle \rho,X^0\rangle\right)}dY.
\]
We will drop several of the conditions on $Y$ in the course of the proof to simplify the integral.

 Write $\tau' = 2\tau +b$. We distinguish the case of $d$ being even and odd. Define
\begin{equation}\label{defn-s}
 s =\begin{cases}
       d/2		&\text{if $d$ even},\\
       (d+1)/2		&\text{if $d$ odd}.
      \end{cases}
\end{equation}
 
 We first assume that $d\geq 3$ is odd. We can write $Y$ in the integral as
\[
Y= (a_1,\ldots, a_s, -a_1-\cdots -a_s + b_1+\cdots+ b_s ,-b_s,\ldots, -b_1)
\]
with $\tau'\ge a_1\ge\cdots \ge a_s\ge 0$ and $\tau'\ge b_1\ge\cdots\ge b_s\ge0$. Then $\langle \rho, Y\rangle = \sum_{i=1}^s (s-i) (a_i+b_i)$, and, using Lemma \ref{lem:X0:explicit}, $\langle \rho, X^0 \rangle = 2 A$ with $A=\sum_{i=1}^s(s-i)$. Moreover, $\|Y\|_\infty = \max\{a_1, b_1\}$. Hence
\[
I_2(\tau)\ll \int\ldots\int  e^{2\theta\left( \sum_{i=1}^s (s-i) (a_i+b_i)- 2A\max\{a_1,b_1\}\right)},
\]
where the integral runs over
\[
\left\{
a_1,\ldots, a_s, b_1,\ldots, b_s\bigg\vert 
\begin{aligned}
 \tau'\ge a_1\ge0,\; a_1\ge a_2\ge0,\; & \ldots,\; a_{s-1}\ge a_s\ge0,\\
\tau'\ge b_1\ge0,\; b_1\ge b_2\ge0,\; & \ldots,\; b_{s-1}\ge b_s\ge0.
\end{aligned}
\right\}
\]
We now integrate first $a_s$ and $b_s$ and continue successively with $a_{s-1},\ldots, a_2$ and $b_{s-1},\ldots, b_2$ until only $a_1$ and $b_1$ remain to be integrated. In this way we obtain
\[
I_2(\tau)\ll\iint_0^{\tau'} e^{2\theta A(a_1+b_1 - 2\max\{a_1, b_1\})}\, da_1\, db_1
= \iint_0^{\tau'} e^{- 2\theta A|a_1-b_1|}\, da_1\, db_1
= 2\tau' \int_0^{\tau'} e^{- 2\theta Ax}\, dx
\ll \tau.
\]

If $d$ is even, then $d\ge4$ and $s\ge2$. We can write $Y= (a_1,\ldots, a_s, -b_s, \ldots, -b_1)$ with $\tau'\ge a_1\ge\cdots\ge a_s\ge 0$, $\tau'\ge b_1\ge\cdots\ge b_s\ge 0$, and 
\begin{equation}\label{eq:dependence:coeff}
a_1+\cdots + a_s = b_1+\cdots + b_s. 
\end{equation}
Then $\|Y\|_\infty = \max\{a_1, b_1\}$ and
\[
\langle \rho, Y\rangle 
= \sum_{i=1}^s (s-i+1/2)(a_i+b_i)= \sum_{i=1}^s (s-i+1)a_i + \sum_{i=1}^{s-1} (s-i)b_i,
\]
where we have used \eqref{eq:dependence:coeff} to replace $b_s$. Furthermore, using Lemma \ref{lem:X0:explicit}, we have $\langle\rho, X^0\rangle =A+ B$, where we have put
\[
A=\sum_{i=1}^s (s-i+1)\quad\text{ and }\quad B= \sum_{i=1}^s (s-i).
\] 
Extending the domain of integration if necessary, we obtain
\[
I_2(\tau)\leq \int\ldots\int e^{2\theta\left(\sum_{i=1}^s (s-i+1)a_i + \sum_{i=1}^{s-1} (s-i)b_i - (A+B)\max\{a_1,b_1\}\right)},
\]
where the integral runs over 
\[
\left\{ a_1,\ldots, a_s, b_1,\ldots, b_{s-1}\bigg\vert
\begin{aligned}
\tau'\ge a_1\ge0,\; a_1\ge a_2\ge0,\; & \ldots,\; a_{s-1} \ge a_s\ge0,\\
\tau'\ge b_1\ge0,\; b_1\ge b_2\ge0,\; & \ldots,\; b_{s-2}\ge b_{s-1}\ge0
\end{aligned}
\right\}.
\]
As before, we successively integrate over $a_s,a_{s-1},\ldots, a_2$ and $b_{s-1}, b_{s-2},\ldots, b_2$, obtaining
\[
I_2(\tau)\ll\iint_{0}^{\tau'} e^{2\theta ( Aa_1 + Bb_1 - (A+B)\max\{a_1, b_1\})}\, da_1\, db_1
\le  \iint_{0}^{\tau'} e^{-2\theta B |a_1-b_1|} \, da_1\, db_1
\ll \tau
\]
as desired.\qed

\begin{appendix}
\section{Cones and volumes}\label{appendix:X0}
Recall the definition of $\cP^+$ and $E_t$ in \S\ref{sec:Et}. 
Our goal is to write $\cP^+$ as the intersection of $\overline{\af^+}$ with a suitable cone in $\af$ and to calculate the asymptotic volume of the set $E_t$.

\subsection{Cones}
We will identify $\af$ as before with the subspace of $\R^d$ consisting of all vectors $X=(X_1,\ldots, X_d)$ with $X_1+\cdots+X_d=0$. We also identify $\af$ with the set of all trace-zero diagonal matrices whenever convenient. 

\begin{lemma}\label{lem:X0:explicit}
Let $X^0\in \cP$ be any point in $\cP$ satisfying
\[
\langle X^0, \rho\rangle = \max_{X\in\cP^+} \langle X, \rho\rangle.
\]
Let $\{e_1,\ldots, e_d\}$ denote the usual standard basis of $\R^d$.
Then
\begin{equation}\label{defX0}
X^0 = \begin{cases}
							e_1+\cdots + e_s - e_{s+1}-\cdots - e_d		&\text{if }d\text{ even};\\
							e_1+\cdots + e_{s-1} - e_{s+1}-\cdots -e_d		&\text{if }d\text{ odd},
\end{cases}
\end{equation}
where $s$ is defined in \eqref{defn-s}. In particular, $X^0$ is unique.
\end{lemma}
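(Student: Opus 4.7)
The plan is to reformulate the linear programming problem over $\cP^+$ via Abel summation on partial sums, which makes an elementary coordinate-wise bound visible. Identify $\af$ with the hyperplane $\{X\in\R^d:X_1+\cdots+X_d=0\}$ and let $S_k = X_1+\cdots+X_k$, so that $S_0=S_d=0$. Under the identification induced by the trace form, $\rho$ corresponds to the vector with coordinates $\rho_i = (d+1-2i)/2$, the key feature being that $\rho_i-\rho_{i+1}=1$ for all $i=1,\ldots,d-1$. Applying summation by parts,
\[
\langle X,\rho\rangle = \sum_{i=1}^d X_i\rho_i = S_d\rho_d + \sum_{i=1}^{d-1} S_i(\rho_i-\rho_{i+1}) = \sum_{k=1}^{d-1} S_k,
\]
so the linear form we wish to maximize reads, universally on $\cP^+$, as the sum of the partial sums $S_1,\ldots,S_{d-1}$.

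Next I would bound each $S_k$ individually. For $X\in\cP^+$ the constraint $|X_i|\le 1$ together with $S_0=0$ gives $S_k\le k$, while $S_d=0$ combined with $|X_i|\le 1$ gives $S_k = -(X_{k+1}+\cdots+X_d)\le d-k$. Hence $S_k\le\min(k,d-k)$ for each $k$, and summing,
\[
\langle X,\rho\rangle \;\le\; \sum_{k=1}^{d-1}\min(k,d-k),
\]
for every $X\in\cP^+$. Note that the ordering constraint $X_1\ge\cdots\ge X_d$ has not been invoked in the bound; its role is only to ensure that the extremizer we identify actually lies in $\cP^+$ and not merely in $\cP$.

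Finally, equality in both of the bounds used forces $S_k=\min(k,d-k)$ for every $k\in\{1,\ldots,d-1\}$, and this uniquely determines $X_k = S_k-S_{k-1}$. A short case analysis, splitting on the parity of $d$, produces the explicit extremizer. When $d=2s$ is even, one reads off $X_k=1$ for $k\le s$ and $X_k=-1$ for $k\ge s+1$; when $d=2s-1$ is odd, one reads off $X_k=1$ for $k\le s-1$, $X_s=(d-s)-(s-1)=0$, and $X_k=-1$ for $k\ge s+1$. In either case the resulting vector satisfies $X_1\ge\cdots\ge X_d$ and $\|X\|_\infty\le 1$, so it lies in $\cP^+$, realizes the upper bound, and matches the formula \eqref{defX0}.

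There is no real obstacle here; the only ``trick'' is recognizing that the constant root-spacing $\rho_i-\rho_{i+1}=1$ turns $\langle X,\rho\rangle$ into a sum of partial sums, after which the problem decouples coordinate by coordinate. Uniqueness is automatic because the upper bounds $S_k\le k$ and $S_k\le d-k$ are saturated at $X^0$ for every $k$ simultaneously, pinning down each $X_k$.
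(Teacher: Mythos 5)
Your proof is correct, and it takes a genuinely different (though equally elementary) route from the paper's. The paper exploits the antisymmetry $\rho_i = -\rho_{d+1-i}$ to pair the coordinates and write $\langle X,\rho\rangle = \sum_{i\le d/2}(X_i - X_{d+1-i})\rho_i$, then observes that each pair $(X_i, X_{d+1-i})$ is constrained to $[-1,1]^2$ and the positive coefficients $\rho_i$ force $X_i=1$, $X_{d+1-i}=-1$ (with the middle coordinate fixed by the trace-zero condition when $d$ is odd). You instead exploit the constant spacing $\rho_i - \rho_{i+1}=1$ and Abel summation to rewrite $\langle X,\rho\rangle$ as the sum of the partial sums $S_1,\ldots,S_{d-1}$, then bound each $S_k$ by $\min(k,d-k)$ using the box constraints from both ends. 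The end result is the same; your formulation has the small advantage that uniqueness falls out immediately (equality forces every $S_k$, hence every $X_k = S_k - S_{k-1}$), whereas the pairing argument pins down the coordinates pair by pair. One minor point worth noting explicitly, which you observe implicitly: since your bound holds over all of $\cP\cap\af$ and the extremizer lies in $\overline{\af^+}$, the maximum over $\cP^+$ coincides with the maximum over $\cP\cap\af$, so the lemma's claim that $X^0$ is the unique point of $\cP$ attaining $\max_{\cP^+}\langle\cdot,\rho\rangle$ indeed follows.
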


\begin{proof}
We write the half-sum of positive roots $\rho\in \af^*$ in coordinates as $\rho = (\rho_1,\ldots, \rho_d)\in \R^d\simeq (\R^d)^*$, where $\rho_1+\cdots+\rho_d=0$. We have $\rho_1\ge\cdots\ge\rho_d$ and $\rho_i=-\rho_{d+1-i}$ for every $i\le d/2$ so that if $d$ is odd, then $\rho_{(d+1)/2}=0$. Let $X=(X_1,\ldots, X_d)\in\cP$. Then
\[
 \langle X, \rho\rangle = \sum_{i\le d/2} (X_i - X_{d+1-i}) \rho_i.
\]
Since $X\in\cP$, we have $-2\le X_i-X_{d+1-i}\le 2$. Furthermore, $\rho_i>0$ for $i\le d/2$, so this sum is maximized for $X_i=1$ and $X_{d+1-i}=-1$. This proves our assertion for $d$ even. For $d$ odd, we finally note that $X_1+\cdots+X_d=0$ forces $X_{(d+1)/2}=0$.
\end{proof}

\begin{lemma}\label{mu-basis}
There is a basis $\mu_1,\ldots ,\mu_{d-1}\in\af^*$ such that $\mu_i(X^0)=1$, $i=1,\ldots, d-1$, and  $\cP^+=\CmC\cap\overline{\af^+}$, where
\[
\CmC=X^0+\CmC^0 \quad\text{and}\quad \CmC^0=\{X\in\af: \mu_i(X)\leq 0\}.
\]
In particular, if $\{\beta_i^\vee\}$ is the basis in $\af$ which is dual to $\{\mu_i\}$ then
\[
\CmC^0=\left\{\sum_{i=1}^{d-1} x_i \beta_i^{\vee}: \forall i\; x_i\leq 0 \right\}
\]
and $ X^0= \beta_1^\vee +\cdots + \beta_{d-1}^\vee$.
\end{lemma}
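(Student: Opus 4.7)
The plan is to define the basis $\{\mu_i\}$ explicitly in terms of the standard coordinate functionals $X_1,\dots,X_d$ on $\af$, tailored to the two parity cases of $d$ appearing in Lemma \ref{lem:X0:explicit}, and then verify the three required properties (normalization at $X^0$, basis property, and the description of $\cP^+$) by direct inspection. The dual-basis statements will then follow by elementary linear algebra.

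First I would write down the $\mu_i$. Recalling that $X^0$ has coordinates $1$ in positions $1,\dots,s$ (resp.\ $s-1$ for $d$ odd), $0$ in position $s$ (only in the odd case), and $-1$ in the remaining $s-1$ positions, I set
\[
\mu_i = X_i \quad (1\le i\le s-1),\qquad \mu_{s-1+j} = -X_{d+1-j}\quad (1\le j\le s-1),
\]
when $d$ is odd, and
\[
\mu_i = X_i \quad (1\le i\le s),\qquad \mu_{s+j} = -X_{d+1-j}\quad (1\le j\le s-1),
\]
when $d$ is even. In both cases this gives exactly $d-1$ linear forms, and a direct substitution of the explicit $X^0$ from Lemma \ref{lem:X0:explicit} yields $\mu_i(X^0)=1$ for all $i$.

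Next I would check that $\{\mu_i\}$ is a basis of $\af^*$. The construction takes all of the standard coordinate functionals $X_1,\dots,X_d$ except one (namely $X_s$ in the odd case, $X_{s+1}$ in the even case), up to sign. Since the map $\R^d\to\af^*$ sending $X_i$ to its restriction to $\af$ has kernel spanned by $(1,\dots,1)$, any $d-1$ of the $X_i$'s restrict to a basis of $\af^*$, giving the claim. The remaining key step is to verify the equality $\cP^+ = \cC\cap \overline{\af^+}$. The inclusion $\cP^+\subseteq \cC\cap\overline{\af^+}$ is immediate since every $X\in\cP^+$ satisfies $|X_i|\le 1$ for all $i$, hence $\mu_i(X)\le 1$ for all $i$, i.e.\ $X-X^0\in \cC^0$. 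Conversely, if $X\in\overline{\af^+}$ satisfies $\mu_i(X)\le 1$ for all $i$, then $X_i\le 1$ for each index $i$ appearing positively among the $\mu$'s, and $X_j\ge -1$ for each index $j$ appearing negatively. Using the ordering $X_1\ge X_2\ge\cdots\ge X_d$ coming from $X\in\overline{\af^+}$, these conditions collapse to the two extremal bounds $X_1\le 1$ and $X_d\ge -1$, and these together with the ordering are precisely the defining conditions of $\cP^+$.

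Finally, the statements about the dual basis $\{\beta_i^\vee\}$ are a formal consequence: for any $X\in\af$ one has $X=\sum_i\mu_i(X)\beta_i^\vee$, so $\cC^0=\{X: \mu_i(X)\le 0\;\forall i\}$ translates into $\{\sum_i x_i\beta_i^\vee: x_i\le 0\;\forall i\}$, and evaluating at $X=X^0$ and using $\mu_i(X^0)=1$ gives $X^0=\beta_1^\vee+\cdots+\beta_{d-1}^\vee$. I expect the only mildly delicate point to be keeping track of the two parity cases when writing down the $\mu_i$ and confirming that the omitted coordinate really allows one to span $\af^*$; once those bookkeeping choices are made correctly, the rest of the argument is essentially a check of linear inequalities.
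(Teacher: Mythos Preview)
Your proof is correct and coincides with the paper's approach: the paper also takes the $\mu_i$ to be (up to sign) $d-1$ of the standard coordinate functionals, omitting $X_s$ in the odd case and $X_{s+1}$ in the even case, though it presents them as $\varpi_1$, $\varpi_{d-1}$, and differences $\varpi_i-\varpi_{i\pm 1}$ of fundamental weights. The verification of $\cP^+=\cC\cap\overline{\af^+}$ via the ordering $X_1\ge\cdots\ge X_d$ and the formal deduction of the dual-basis statements are likewise the same as in the paper.
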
\label{eq:X0:sum:betas}

\begin{proof}
Note that 
\begin{equation}\label{inter}
\cP^+=\{X\in\af: X_1\leq 1,\; -X_d\leq 1\}\cap \overline{\af^+}.
\end{equation}
We rewrite \eqref{inter} using the system of fundamental weights $\varpi_i\in\af^*$:
\begin{equation}\label{inter2}
\cP^+=\{X\in\af: \varpi_1(X)\leq 1, \varpi_{d-1}(X)\leq 1\}\cap \overline{\af^+}.
\end{equation}
Since $\varpi_1(X^0)=\varpi_{d-1}(X^0)=1$, we can put $\mu_1=\varpi_1$ and $\mu_{d-1}=\varpi_{d-1}$. The strategy is then to complete $\{\mu_1,\mu_{d-1}\}$ to a basis of linear forms $\{\mu_i\}$ in such a way that
\begin{enumerate}
\item\label{1} $\mu_i(X^0)=1$ for all $i=1,\ldots d-1$;
\item\label{2} the conditions $X\in\overline{\af^+}$ and $\mu_1(X),\mu_{d-1}(X)\leq 1$ imply $\mu_i(X)\leq 1$ for $i=2,\ldots,d-2$.
\end{enumerate}
In this case, property \eqref{1} shows that
\[
\CmC =  X^0+\CmC^0=\{X\in\af\mid \forall i:\, \mu_i(X - X^0)\le 0\}=\{X\in\af\mid \forall i:\, \mu_i(X)\le 1\},
\]
and then property \eqref{2} combines with \eqref{inter2} to establish $\cP^+=\CmC \cap\overline{\af^+}$.

Note that, relative to the standard basis $\{e_i\}$ of $\R^d$, $\mu_1$ is the (restriction to $\af$ of the) first coordinate functional and $\mu_{d-1}$ is minus the (restriction to $\af$ of the) last coordinate functional. Thus if the $\pm\mu_i$, for $i=2,\ldots, d-2$, are a linearly independent subset of the remaining $d-2$ coordinate functionals, then \eqref{2} follows from the fact that $1\geq X_1\geq\cdots \geq X_d\geq -1$ implies $\pm X_i\leq 1$. To assure property (1) we simply need to choose the sign suitably and avoid the $(d+1)/2$ coordinate for odd $d$. Thus for $d$ odd we put
\[
\mu_i = \begin{cases}
\varpi_i-\varpi_{i-1}			&\text{if } 2\le i\le s-1;\\
\varpi_i-\varpi_{i+1}			&\text{if } s\le i \le d-2,\\
\end{cases}
\]
and for $d$ even we put
\[
\mu_i = \begin{cases}
\varpi_i-\varpi_{i-1}						&\text{if } 2\leq i\le s;\\
\varpi_{i}-\varpi_{i+1} 						&\text{if } s+1\leq i\leq d-2,
\end{cases}
\]
where $s$ is defined in \eqref{defn-s}. Using the definition \eqref{defX0} of $X^0$ we quickly verify property \eqref{1}. \end{proof}

\begin{remark}\label{rem:positive:rho}
 Using the explicit description of the basis $\{\mu_i\}_{1\le i\le d-1}$, one can easily see that writing $\rho$ as a linear combination of the $\mu_i$, all the coefficients are positive integers. Hence if $Y\in \cC^0$, then $\langle\rho, Y\rangle<0$. 
\end{remark}

\subsection{Volumes}

\begin{proposition}\label{Prop:volumes:appendix}
 Suppose $P\subseteq \overline{\af^+}$ is a convex polytope. Let $V_P$ denote the vertices of $P$, and suppose that $V_P\ni X\mapsto \langle\rho, X\rangle$ takes its maximum at exactly one vertex $v_0\in V_P$. Then there exist $c, \delta>0$ such that
 \begin{equation}\label{eq:exponential_polytope}
  \int_{P_t} \sum_{w\in W} \sigma(w) e^{\langle w\rho, X\rangle}\, dX
  = c e^{t\langle\rho, v_0\rangle}\left(1 + O(e^{-t\delta})\right)
 \end{equation}
as $t\rightarrow\infty$. Here $P_t=\{tX\mid X\in P\}$ and $\sigma(w)$ denotes the signature of $w\in W$.
\end{proposition}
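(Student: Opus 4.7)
The plan is to apply the Weyl denominator formula to reveal the positive structure of the integrand, perform a change of variables centered at $v_0$, and then extract the leading asymptotic from the tangent cone of $P$ at $v_0$. The first step will be to rewrite
\[
\sum_{w\in W}\sigma(w)e^{\langle w\rho,X\rangle} = e^{\langle\rho,X\rangle}\prod_{\alpha\in\Phi^+}\bigl(1-e^{-\alpha(X)}\bigr)
\]
via the Weyl denominator identity; since $P_t\subseteq\overline{\af^+}$, every factor lies in $[0,1]$ and the integrand is non-negative. Substituting $X=tv_0+Y$ converts the left-hand side of \eqref{eq:exponential_polytope} into
\[
e^{t\langle\rho,v_0\rangle}\int_{t(P-v_0)}e^{\langle\rho,Y\rangle}\prod_{\alpha\in\Phi^+}\bigl(1-e^{-t\alpha(v_0)-\alpha(Y)}\bigr)\,dY.
\]

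Next I would introduce the tangent cone $C_{v_0}=\{Y\in\af : v_0+\epsilon Y\in P \text{ for all sufficiently small }\epsilon>0\}$, a full-dimensional polyhedral cone. Because $v_0$ is the unique maximizer of $\langle\rho,\cdot\rangle$ over the vertex set, and hence over all of $P$, the linear functional $\langle\rho,\cdot\rangle$ is strictly negative on $C_{v_0}\setminus\{0\}$; compactness on the unit sphere of $C_{v_0}$ then produces $c_0>0$ with $\langle\rho,Y\rangle\le -c_0\|Y\|_2$ throughout $C_{v_0}$. Since $v_0$ is a vertex of the polytope, there is $\epsilon_0>0$ with $(P-v_0)\cap B(0,\epsilon_0)=C_{v_0}\cap B(0,\epsilon_0)$, and cone-scaling identifies $t(P-v_0)\cap B(0,t\epsilon_0)$ with $C_{v_0}\cap B(0,t\epsilon_0)$. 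The contribution from the outer piece $t(P-v_0)\setminus B(0,t\epsilon_0)$ will be $O(e^{-\delta t})$ after removing the factor $e^{t\langle\rho,v_0\rangle}$: there $\langle\rho,Y\rangle\le -t\delta_0$ by linearity of $\rho$ on the compact set $(P-v_0)\setminus B(0,\epsilon_0)$, each factor of the product is bounded by $1$, and the volume is $O(t^r)$.

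On the inner piece I plan to split the positive roots as $\Phi^+=\Phi_0\sqcup\Phi_+$ with $\Phi_0=\{\alpha\in\Phi^+:\alpha(v_0)=0\}$, and shrink $\epsilon_0$ if necessary so that $t\alpha(v_0)+\alpha(Y)\ge t\alpha(v_0)/2$ for every $\alpha\in\Phi_+$ and $Y\in B(0,t\epsilon_0)$. Then $\prod_{\alpha\in\Phi_+}\bigl(1-e^{-t\alpha(v_0)-\alpha(Y)}\bigr)=1+O(e^{-tc_+})$ uniformly, and extending the remaining integral over $C_{v_0}\cap B(0,t\epsilon_0)$ to all of $C_{v_0}$ costs only $O(e^{-tc_0\epsilon_0})$ thanks to the exponential decay of $e^{\langle\rho,Y\rangle}$ on the cone. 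Altogether this yields the asserted asymptotic with
\[
c=\int_{C_{v_0}}e^{\langle\rho,Y\rangle}\prod_{\alpha\in\Phi_0}\bigl(1-e^{-\alpha(Y)}\bigr)\,dY,
\]
and positivity $c>0$ follows because any $Y$ in the interior of $C_{v_0}$ satisfies $v_0+\epsilon Y\in\mathrm{int}(P)\subseteq\af^+$ for small $\epsilon$, forcing $\alpha(Y)>0$ for each $\alpha\in\Phi_0$ and rendering the integrand strictly positive on an open subset of $C_{v_0}$.

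The main subtlety I anticipate, and the reason the partition $\Phi^+=\Phi_0\sqcup\Phi_+$ is unavoidable, is that the unique maximizer $v_0$ need not be regular: in the motivating case $P=\cP^+$ and $v_0=X^0$, Lemma~\ref{lem:X0:explicit} places $X^0$ on many walls of $\overline{\af^+}$, so several factors $(1-e^{-\alpha(Y)})$ with $\alpha\in\Phi_0$ survive non-trivially into the leading constant $c$. A naive attempt to extract $\int_{P_t}e^{\langle\rho,X\rangle}dX$ as the main term would miss these cancellations; conversely, if $v_0$ were always regular, every $w\neq e$ would contribute a strictly smaller exponential growth and the argument would collapse to a one-term asymptotic.
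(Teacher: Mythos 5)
Your proof is correct, and it takes a genuinely different route from the paper's. The paper invokes Brion's formula to write $\int_{P^j_t} e^{\langle w\rho,X\rangle}\,dX$ as a finite exponential sum over vertices, collects the $e^{t\langle\rho,v_0\rangle}$ terms, and then argues the resulting coefficient is positive by a sandwiching trick: it first disposes of the case where $v_0$ is the unique vertex with $\langle w\rho,v\rangle=\langle\rho,v_0\rangle$, and otherwise replaces $P$ by a nearby subpolytope $P'$ with regular vertices to extract a lower bound on the exponential growth rate. Your argument instead uses the Weyl denominator identity to make the non-negativity of the integrand structural, recenters at $v_0$, and performs a Laplace-type analysis on the tangent cone $C_{v_0}$, with the split $\Phi^+=\Phi_0\sqcup\Phi_+$ handling the crucial non-regularity of $v_0$ (which you correctly flag as the heart of the matter — indeed by Lemma~\ref{lem:X0:explicit} the point $X^0$ lies on many walls). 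What your approach buys: an explicit closed-form for the leading constant, $c=\int_{C_{v_0}}e^{\langle\rho,Y\rangle}\prod_{\alpha\in\Phi_0}(1-e^{-\alpha(Y)})\,dY$, whose positivity is immediate from strict positivity of the integrand on $\operatorname{int}(C_{v_0})$; it also avoids the need to subdivide $P$ into simple pieces and the Brion reference [BV] altogether. The only cosmetic gaps are that the proposition tacitly assumes $P$ full-dimensional (as your positivity argument, like the paper's, requires), and that your tail estimate off $B(0,t\epsilon_0)$ is really $O(t^{r-1}e^{-c_0\epsilon_0 t})$ rather than $O(e^{-c_0\epsilon_0 t})$, which of course is still absorbed into $O(e^{-\delta t})$ after shrinking $\delta$.
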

Before starting the proof of this proposition, we note that 
\[
\sum_{w\in W} \sigma(w) e^{\langle w\rho, X\rangle}\ge 0
\]
for all $X\in\overline{\af^+}$ because of the convexity of the exponential function.
\begin{proof}
 First note that 
 $\langle w\rho, X\rangle\le \langle\rho, X\rangle$
 for all $X\in\overline{\af^+}$, and in fact 
 $\langle w\rho, X\rangle< \langle\rho, X\rangle$
  if $X\in\af^+$. If $P$ is not simple, we divide it into disjoint simple polytopes $P^1,\ldots, P^r$ whose respective sets of vertices we denote by $V^1,\ldots, V^r$. The vertices $V_P$ are contained in the union $\bigcup_j V^j$. By Brion's formula \cite{BV} there exist non-zero coefficients $a_v^j\in\R$, $j=1,\ldots, r$, $v\in V^j$, such that 
 \[
  \int_{P_t^j} e^{\langle w\rho, X\rangle}\, dX
   = \sum_{v\in V^j} a_v^je^{t\langle w\rho, v\rangle}.
 \]
We can find $\delta>0$ such that for every $j$ and $v\in V^j$,  $\langle w\rho, v\rangle\le \langle \rho, v_0\rangle-\delta$ unless $w^{-1}v = v_0$. Hence,
 \begin{equation}\label{eq:exp_polytope}
   \int_{P_t} \sum_{w\in W} \sigma(w) e^{\langle w\rho, X\rangle}\, dX
   = \left(\sum_{\substack{j,v, w:\\ w^{-1}v=v_0}} a_v^j \sigma(w)\right)   e^{t\langle \rho, v_0\rangle}.
 \end{equation}
It remains to argue that this last sum in brackets is a positive number. If $v_0$ is such that $\langle w\rho, v\rangle = \langle \rho, v_0\rangle$ if and only if $w=1$ and $v=v_0$, then the left hand side of \eqref{eq:exponential_polytope} tends to $+\infty$ as $t\rightarrow\infty$. Moreover, the sum on the right hand side of \eqref{eq:exp_polytope} has exactly one non-zero term in that situation which consequently must be positive. If $v_0$ does not satisfy this condition,  we choose another polytope $P'\subseteq P$ with regular vertices (i.e., $V_{P'}\subseteq \af^+$) that are sufficiently close to the original vertices of $P$. Then
\[
  \int_{P_t} \sum_{w\in W} \sigma(w) e^{\langle w\rho, X\rangle}\, dX
  \le  \int_{P_t'} \sum_{w\in W} \sigma(w) e^{\langle w\rho, X\rangle}\, dX,
\]
and the right hand side grows like $ce^{tc'}$ for suitable $c>0$ and $\langle\rho,v_0\rangle \ge c'>\langle\rho, v_0\rangle-\delta$ provided $P'$ is chosen sufficiently close to $P$. This proves that the sum in brackets on the right hand side of \eqref{eq:exp_polytope} is a positive number.
\end{proof}

We apply the above result to estimate the volume of $E_t$.

\begin{corollary}\label{rem:volume}
There exists $c, \delta>0$ such that we have
\[
  m_G(E_t)= 
  c e^{2t\langle \rho, X^0\rangle}\left(1+ O(e^{-t\delta})\right)
 \]
 as $t\rightarrow\infty$.
\end{corollary}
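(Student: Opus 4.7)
The plan is to reduce the volume computation to a direct application of Proposition \ref{Prop:volumes:appendix} applied to the polytope $\cP^+$.

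First I would use the polar description \eqref{Et-polar} of $E_t$, together with the Cartan measure decomposition \eqref{Cartan-measure-decomp}, to write
\[
 m_G(E_t)= c \int_{\cP_t^+} J(X)\, dX
\]
for a suitable constant $c>0$ depending only on $d$. Then I would substitute the expression \eqref{J(H)} for $J(X)$ and perform the change of variables $Y=2X$ (whose Jacobian is $2^{-r}$, with $r=\dim\af$) to arrive at
\[
 m_G(E_t)= c\, 2^{-r-|\Phi^+|} \int_{\cP_{2t}^+} \sum_{w\in W}\sigma(w) e^{\langle w\rho, Y\rangle}\, dY.
\]

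At this point it suffices to invoke Proposition \ref{Prop:volumes:appendix} applied to the convex polytope $P=\cP^+\subseteq \overline{\af^+}$, with the parameter $t$ replaced by $2t$. To do so one must verify that $V_{\cP^+}\ni X\mapsto \langle\rho, X\rangle$ attains its maximum at a single vertex, and that this vertex is $X^0$. Note that $\cP^+$ is indeed a convex polytope, being the intersection of the bounded convex polytope $\cP$ with the closed polyhedral cone $\overline{\af^+}$. Moreover, by \eqref{defnX0} the functional $\langle\rho,\cdot\rangle$ attains its maximum on $\cP^+$ at the unique point $X^0$; since the maximum of a linear functional on a polytope is attained at a vertex, $X^0$ belongs to $V_{\cP^+}$ and is the unique vertex achieving the maximum.

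The proposition then yields
\[
 \int_{\cP_{2t}^+} \sum_{w\in W}\sigma(w) e^{\langle w\rho, Y\rangle}\, dY = c' e^{2t\langle\rho, X^0\rangle}\bigl(1+ O(e^{-2t\delta})\bigr)
\]
for some constants $c',\delta>0$, which when combined with the previous display gives the stated asymptotic. I do not expect any serious obstacle: the vertex hypothesis is already packaged into the definition of $X^0$, and the rescaling by $2$ in the exponent is handled cleanly by the change of variables above. The only small point requiring care is tracking the constant $c$ coming from the Cartan decomposition of Haar measure, but this is not needed explicitly since the statement of the corollary only asserts the existence of some constant.
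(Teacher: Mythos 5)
Your proposal is correct and follows essentially the same route as the paper: expand $m_G(E_t)$ over $\cP_t^+$ via the Cartan decomposition and \eqref{J(H)}, then invoke Proposition \ref{Prop:volumes:appendix}. You spell out a couple of steps the paper leaves implicit (the change of variables $Y=2X$ to absorb the factor $2$ in the exponent, and the check that $X^0$ is the unique maximizing vertex of $\cP^+$), but the argument is the same.
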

\begin{proof}
By Cartan decomposition we have 
\[
 m_G(E_t) = \int_{P_t} J(X)\, dX
 = 2^{-|\Phi^+|}\int_{P_t} \sum_{w\in W} \sigma(w) e^{2\langle w\rho, X\rangle} \, dX
\]
where $\sigma(w)$ denotes the signature of $w$. The assertion of the lemma then follows from Proposition \ref{Prop:volumes:appendix}.
\end{proof}

\section{Angles and inner products}\label{sec:cones}

The purpose of this appendix is to prove the following result, used in the course of the proof of Lemma \ref{lem:wInt:nonzero}.

Recall the notation of that lemma. Let $X^0$ be as in \eqref{defnX0} and let $M$ denote its centralizer in $G$. For $Y\in \af$ we write $Y= Y_M+Y^M$ for unique $Y_M\in \af_M$ and $Y^M\in \af^M$. As before, for $\eta>0$ we are writing $\cT_\eta$ for the set of all $\lambda\in \af^*_\C$ such that $\Re\lambda$ lies in the convex hull of the Weyl group orbit of $\eta\rho$. The cone $\cC^0$ is defined in Lemma \ref{mu-basis}.
\begin{proposition}\label{claims}
There are constants $C_1, C_2>0$, depending only on $d$, such that for every $Y\in \cC^0$, $\lambda\in \cT_\eta$, and $w\in W$  we have 
\begin{enumerate}[label=(\roman*)]
\item\label{claim2} $|\langle w \rho, Y_M\rangle|\leq   - C_1\langle \rho, Y\rangle$;
\item\label{claim1} $|\langle \Re\lambda,  Y_M\rangle| \leq - C_2\eta  \langle \rho, Y_M\rangle$.
\end{enumerate}
In particular, there is a constant $C>0$, depending only on $d$, such that for all  $Y\in \cC^0$ and $\lambda\in \cT_\eta$ we have
\[
 |\langle \Re \lambda, Y_M\rangle|\le - \eta C \langle\rho, Y\rangle.
\]
\end{proposition}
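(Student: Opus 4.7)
The plan is to exploit three facts from the excerpt: the cone description $\cC^0 = \{\sum x_i\beta_i^\vee : x_i\le 0\}$ from Lemma~\ref{mu-basis}, the positivity of the coefficients $c_i$ in the expansion $\rho = \sum c_i \mu_i$ noted in Remark~\ref{rem:positive:rho}, and the explicit form of $X^0$ (and hence of the Levi $M$ as its centralizer) given by Lemma~\ref{lem:X0:explicit}. I treat parts (i) and (ii) in turn and then combine them.

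Part (i) will be a direct norm bound. Writing $Y = -\sum_{i=1}^{d-1}|x_i|\beta_i^\vee$ with $|x_i|\ge 0$, Remark~\ref{rem:positive:rho} gives $-\langle \rho, Y\rangle = \sum c_i |x_i| \ge c_{\min}\sum |x_i|$ for some $c_{\min}>0$. By norm equivalence on the finite-dimensional space $\af$, $\|Y\|_2 \ll \sum|x_i|$, and the orthogonal projection $\pi\colon\af\twoheadrightarrow\af_M$ is a contraction, so $\|Y_M\|_2\le \|Y\|_2$. Cauchy--Schwarz together with the $W$-invariance of $\|\cdot\|_2$ then yields
\[
 |\langle w\rho, Y_M\rangle| \le \|\rho\|_2\,\|Y_M\|_2 \le -C_1\langle \rho, Y\rangle,
\]
establishing (i).

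For part (ii), I will decompose $\Re\lambda = \eta\sum_{w\in W} t_w\, w\rho$ as a convex combination (by the definition of $\cT_\eta$, with $t_w\ge 0$ and $\sum_w t_w=1$). Then $|\langle \Re\lambda, Y_M\rangle| \le \eta\sum_w t_w|\langle w\rho, Y_M\rangle|$, so it suffices to establish the uniform inequality $|\langle w\rho, Y_M\rangle| \le C'|\langle \rho, Y_M\rangle|$ for every $w\in W$ and every $Y\in \cC^0$. Set $\cC_M = \pi(\cC^0)\subseteq \af_M$, a finitely generated closed cone. The main claim is that $\langle \rho, Y_M\rangle < 0$ for every $Y_M\in \cC_M\setminus\{0\}$; granting this, the bound follows by a compactness argument, since the ratio $Y_M\mapsto |\langle w\rho, Y_M\rangle|/|\langle \rho, Y_M\rangle|$ is continuous and well-defined on the compact set $\{Y_M\in \cC_M : \|Y_M\|_2 = 1\}$, and $W$ is finite.

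To verify the strict negativity, I will identify $\af_M$ with the subspace of vectors in $\af$ that are constant on each eigenvalue-block of $X^0$: with $m=2$ blocks for $d$ even and $m=3$ for $d$ odd, and with $n_j$ denoting the size of the $j$-th block, one has $Y_M=(\bar Y^{(j)})_j$ where $\bar Y^{(j)} = n_j^{-1}\sum_{i\in\text{block } j}Y_i$. Rewriting the conditions $\mu_i(Y)\le 0$ from Lemma~\ref{mu-basis} in coordinates forces $\bar Y^{(1)}\le 0$ and $\bar Y^{(m)}\ge 0$, while an explicit summation yields $\langle \rho, Y_M\rangle = \kappa(\bar Y^{(1)} - \bar Y^{(m)})$ for an explicit positive constant $\kappa$. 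This is non-positive, with equality only if $\bar Y^{(1)}=\bar Y^{(m)}=0$; trace-freeness then forces the remaining component of $Y_M$ (in the middle block, when $d$ is odd) to vanish, so $Y_M=0$. Combining (ii) with (i) applied to $w=\mathrm{Id}$ immediately yields the final assertion $|\langle \Re\lambda, Y_M\rangle|\le -\eta C\langle \rho, Y\rangle$ with $C=C_1 C_2$. The main obstacle will be verifying the strict negativity of $\langle \rho,\cdot\rangle$ on $\cC_M\setminus\{0\}$; the case $d$ odd is slightly more delicate, since $\af_M$ is then two-dimensional and one must check that $\cC_M$ does not extend into the direction inside $\af_M$ orthogonal to $\rho_M$.
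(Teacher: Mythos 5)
Your proof is correct, and it takes a genuinely different route from the paper's. The paper derives part~(i) from Lemma~\ref{lem:L-max} (a comparison of a positive linear form $L$ against the convex hull of its $S_d$-orbit, evaluated along the ray $\R_{\ge0}X^0$) together with the observation that $\rho$ vanishes on the complementary line $V\subset\af_M$ when $d$ is odd; you instead dispose of (i) with a bare Cauchy--Schwarz estimate $|\langle w\rho,Y_M\rangle|\le\|\rho\|_2\|Y_M\|_2\le\|\rho\|_2\|Y\|_2$, combined with the observation (from Remark~\ref{rem:positive:rho} and the dual-basis description of $\cC^0$) that $-\langle\rho,Y\rangle\gg\sum_i|x_i|\gg\|Y\|_2$. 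This is shorter, avoids Lemma~\ref{lem:L-max} entirely, and has the minor advantage of treating all $w\in W$ and both parities of $d$ uniformly. For part~(ii) the two approaches converge on the same essential point: $\langle\rho,\cdot\rangle$ is strictly negative on $\pi(\cC^0)\smallsetminus\{0\}$, so a homogeneity/compactness argument on the unit sphere cap of $\cC_M$ gives the desired uniform ratio bound. The paper packages this as Lemma~\ref{lem:linear:forms:inequ} via Proposition~\ref{prop:compactness:proj} and the acute-angle Lemma~\ref{lem:acute:angle}; you re-derive it directly by expanding $\langle\rho,Y_M\rangle$ in block coordinates and using the sign constraints $\mu_i(Y)\le0$ to force $\bar Y^{(1)}\le0\le\bar Y^{(m)}$, with trace-freeness handling the middle block for $d$ odd. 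I checked that this coordinate computation is indeed complete (the ``delicate'' issue you flag at the end — whether $\cC_M$ can contain a nonzero vector in the $\rho$-null direction of $\af_M$ — is resolved exactly by the trace-freeness step you describe, and is the analogue of Lemma~\ref{lem:acute:angle}). Your final composition $C=C_1C_2$ with $w=\mathrm{Id}$ in (i) is correct. Overall, your argument uses the same cone/dual-basis infrastructure from Appendix~\ref{appendix:X0} but bypasses most of Appendix~\ref{sec:cones}; the paper's framing via ``positive linear forms'' is more structural and would generalize more readily beyond $\SL_d$, while yours is more elementary and self-contained for the case at hand.
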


To prove the proposition we shall work abstractly with linear forms on euclidean space $\R^d$, equipped with its standard inner product. Namely, we make the usual identification of the standard Cartan subalgebra $\af_0$ of $\mathfrak{gl}_d(\R)$ with $\R^d$, so that $\af=\af_0^G$ identifies with the trace-zero hyperplane
\[
\cH=\left\{X=(X_1,\ldots, X_d)\in\R^d : X_1+\cdots + X_d = 0\right\}.
\]
We are interested in the linear form $L(Y) = \langle\rho, Y\rangle$ on $\R^d$ or $\cH$. 
It will be convenient to define a cone $\cC'$ in $\R^d$ such that $\cC'\cap\cH$ coincides with the cone $-\cC^0$ of Proposition \ref{mu-basis}. Property \ref{claim2} will then follow from a similar maximizing property of $L$ on $\cC'$. 
Property \ref{claim1} requires bounds on the angles the vectors in $-\cC^0$ can form with $X^0$, which we deduce from the relative position of $X^0$ with $\mathscr{H}$.

\subsection{Positive linear forms}
Let $\{e_1,\ldots, e_d\}$ denote the standard basis of $\R^d$.

\begin{definition}\label{defn:C'}
If $d$ is even, let $\cC'\subset\R^d$ be the closed orthant 
\[
\cC'={\rm cone}_{\R_{\geq 0}}\{e_1,\ldots, e_{d/2}, -e_{d/2+1},\ldots, -e_d\}.
\]
If $d$ is odd, let $\cC'=\cO^+\cup \cO^-\subset\R^d$ be the union of the closed orthants
\begin{align*}
\cO^+&={\rm cone}_{\R_{\geq 0}}\{e_1,\ldots, e_{(d+1)/2-1},  e_{(d+1)/2}, -e_{(d+1)/2+1},\ldots, -e_d\}\\
 \cO^-&={\rm cone}_{\R_{\geq 0}}\{e_1,\ldots, e_{(d+1)/2-1}, -e_{(d+1)/2}, -e_{(d+1)/2+1}, \ldots, -e_d\}.
 \end{align*}
 \end{definition}

\begin{remark}
Recall the explicit description of $X^0\in\R^d$ as given in \eqref{defX0} of Appendix \ref{appendix:X0}. If $V_d$ denotes the set of vertices of the unit cube $[-1,1]^d$, then $\pm X^0\in V_d$ for $d$ even and $\pm X^0\pm e_{(d+1)/2}\in V_d$ for $d$ odd. In particular, when $d$ is odd $-X^0$ lies on the  edge of $[-1,1]^d$ connecting the vertices $X^0\pm e_{(d+1)/2}$. We deduce that the origin of $\R^d$ is a vertex of $-X^0+[-1,1]^d$ if $d$ is even, and it is the midpoint of an edge of $-X^0+[-1,1]^d$ if $d$ is odd.

It is easy to see that when $d$ is even, $-\cC'$ is the unique orthant in $\R^d$ containing $-X^0+[-1,1]^d$, and when $d$ is odd, $-X^0+[-1,1]^d\subset -\cC'$.
\end{remark}

\begin{remark}\label{rem:lin-subspace}
If $d$ is even, then $\cC'$ does not contain any non-trivial linear subspace, and if $d$ is odd, the only non-trivial linear subspace of $\cC'$ is the one-dimensional space $\R e_{(d+1)/2}$.
\end{remark}

\begin{definition}\label{def:pos-lin-form}
We call a linear form $L:\R^d\longrightarrow \R$ {\rm positive} if it is non-negative on $\cC'$ and positive on $\cC'\smallsetminus \{0\}$ if $d$ is even, and positive on $\cC'\smallsetminus \R e_{(d+1)/2}$ if $d$ is odd. 
\end{definition}

Let $\cL\subseteq \R^d$ denote the ray $\R_{\ge0} X^0$. Since $X^0\in\cC'$, and $\cC'$ is a cone, we have $\cL\subset\cC'$.

We identify the space of linear forms on $\R^d$ with $(\R^d)^*\simeq \R^d$ via the dual basis to $\{e_1,\ldots, e_d\}$.  
If $L$ is a linear form, written as $L=(L_1,\ldots, L_d)$ with respect to this identification, and $\sigma\in S_d$ is a permutation, we define the linear form $\sigma L$ by the rule $\sigma L:=(L_{\sigma(1)},\ldots, L_{\sigma(d)})$. We identify 

\begin{lemma}\label{lem:L-max}
Let $L\in(\R^d)^*$ be a positive linear form. Let $L'$ be a form in the closed convex hull of $\{\sigma L:\sigma\in S_d\}\subseteq (\R^d)^*$. Then for all $X\in\cL$ we have $L'(X)\le L(X)$.
\end{lemma}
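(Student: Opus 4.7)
The plan is to reduce the inequality $L'(X) \leq L(X)$ on all of $\cL = \R_{\geq 0} X^0$ to the single inequality $(\sigma L)(X^0) \leq L(X^0)$ for each $\sigma \in S_d$. Since $\{\sigma L : \sigma \in S_d\}$ is a finite set in the finite-dimensional space $(\R^d)^*$, its closed convex hull equals its convex hull, and any $L'$ in it can be written as $L' = \sum_\sigma c_\sigma (\sigma L)$ with $c_\sigma \geq 0$ and $\sum_\sigma c_\sigma = 1$. Combined with linearity in $X$, this reduction is immediate once the one-point inequality is in hand.

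First I would unpack Definition \ref{def:pos-lin-form} in coordinates. Writing $L = (L_1, \ldots, L_d)$ with respect to the dual basis to $\{e_1, \ldots, e_d\}$, testing non-negativity of $L$ on the generators of $\cC'$ from Definition \ref{defn:C'} gives the sign pattern: when $d$ is even, $L_i \geq 0$ for $i \leq s$ and $L_i \leq 0$ for $i > s$; when $d$ is odd, taking the intersection of the conditions imposed by $\cO^+$ and $\cO^-$ yields $L_i \geq 0$ for $i < s$, $L_s = 0$, and $L_i \leq 0$ for $i > s$.

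Next I would compare this sign pattern with the explicit description of $X^0$ in Lemma \ref{lem:X0:explicit}. In both parities, wherever $X^0_i \neq 0$, its sign agrees with the sign of $L_i$; and when $d$ is odd, the only index at which $X^0_i = 0$ is $i = s$, which is precisely the index where $L_s = 0$ as well. Consequently
\[
L(X^0) \;=\; \sum_{i=1}^d L_i X^0_i \;=\; \sum_{i=1}^d |L_i| \;=\; \|L\|_1.
\]

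For an arbitrary $\sigma \in S_d$, since $|X^0_i| \leq 1$ for every $i$, the triangle inequality gives
\[
(\sigma L)(X^0) \;=\; \sum_{i=1}^d L_{\sigma(i)} X^0_i \;\leq\; \sum_{i=1}^d |L_{\sigma(i)}|\, |X^0_i| \;\leq\; \sum_{i=1}^d |L_{\sigma(i)}| \;=\; \|L\|_1 \;=\; L(X^0).
\]
Taking a convex combination yields $L'(X^0) \leq L(X^0)$, and linearity then gives $L'(t X^0) = t L'(X^0) \leq t L(X^0) = L(t X^0)$ for every $t \geq 0$. There is no real obstacle here: the positivity axiom was designed so that the sign pattern of $L$ and that of $X^0$ are matched, turning $L(X^0)$ into the $S_d$-invariant quantity $\|L\|_1$ against which every permuted pairing $(\sigma L)(X^0)$ is controlled by a one-line triangle-inequality estimate.
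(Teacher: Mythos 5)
Your proof is correct and follows essentially the same route as the paper: deduce the coordinate sign pattern of a positive form from Definition~\ref{def:pos-lin-form}, observe that $L(X^0)=\|L\|_1$, bound $(\sigma L)(X^0)\le\|L\|_1$, and pass to convex combinations and scaling. The only cosmetic difference is that you make the triangle-inequality and $\|L\|_1$-invariance steps explicit, whereas the paper states them more tersely.
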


\begin{proof}
A linear form $L=(L_1,\ldots, L_d)$ is positive if and only if 
\[
\begin{cases}
L_1,\ldots, L_{d/2}>0, \;\; L_{d/2+1},\ldots, L_d<0, &\text{if }d \text{ is even};\\
L_1,\ldots, L_{(d+1)/2-1}>0, \;\; L_{(d+1)/2}=0,\;\; L_{(d+1)/2+1},\ldots, L_d<0,					&\text{if }d \text{ is odd}.
\end{cases}
\]
It follows that for such $L$ we have $L(X^0) = |L_1|+\cdots + |L_d|$, hence $\sigma L(X^0)\le L(X^0)$ for every $\sigma\in S_d$. Hence if $L'$ is in the closed convex hull of $\{\sigma L:\sigma\in S_d\}$ in $(\R^d)^*$, we also have $L'(X^0)\le L(X^0)$.
\end{proof}

\subsection{Hyperplane intersections}
We now put $\cC'' = \cC'\cap \cH$. 

\begin{lemma}\label{lem:acute:angle}
 $\cC''$ is a convex cone in $\R^d$ of dimension $d-1$ that satisfies $\cC''\cap (-\cC'')=\{0\}$. Moreover, there exists $\gamma>0$ such that the angle between any non-zero vector in $\cC''$ and the vector $X^0$ is at most $\pi/2-\gamma$.  
\end{lemma}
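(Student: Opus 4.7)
The plan is to establish the three claims in sequence: convexity and dimension of $\cC''$, the pointedness condition $\cC''\cap(-\cC'')=\{0\}$, and finally the angle estimate via a compactness argument.

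First I would observe that $\cC'$ is already convex in both parities. In the even case it is the orthant described in Definition \ref{defn:C'}. In the odd case, since no sign constraint is placed on the middle coordinate $x_{(d+1)/2}$ (see Remark \ref{rem:lin-subspace}), we may rewrite
\[
\cC'=\left\{x\in\R^d:\ x_i\ge 0 \text{ for } 1\le i\le \tfrac{d-1}{2},\ x_i\le 0 \text{ for } \tfrac{d+3}{2}\le i\le d\right\},
\]
which is evidently convex. Hence $\cC''=\cC'\cap\cH$ is convex. For the dimension, an inspection of \eqref{defX0} shows that $X^0$ lies in the relative interior of $\cC'$ (the sign-constrained coordinates of $X^0$ are strictly positive or strictly negative); since $X^0\in\cH$ as well, the hyperplane $\cH$ meets the interior of $\cC'$, forcing $\dim\cC''=d-1$.

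Next, pointedness: if $v$ and $-v$ both lie in $\cC''$, then the sign conditions in the description of $\cC'$, applied to both $v$ and $-v$, force $v_i=0$ for every coordinate with a nontrivial sign constraint. In the even case this already gives $v=0$, and in the odd case the only surviving coordinate is $v_{(d+1)/2}$, which is then killed by $\sum_i v_i=0$.

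Finally, for the angle bound, I would compute directly, using \eqref{defX0},
\[
\langle v,X^0\rangle \;=\; \sum_{i \text{ with } X^0_i=1} v_i \;-\; \sum_{i \text{ with } X^0_i=-1} v_i,
\]
with the middle coordinate in the odd case not contributing. Each term is nonnegative on $\cC''$ because of the sign conditions defining $\cC'$, so $\langle v,X^0\rangle\ge 0$; moreover, equality forces $v_i=0$ for all $i$ with nonzero $X^0_i$, and in the odd case the remaining freedom in $v_{(d+1)/2}$ is removed by $v\in\cH$. So $v\mapsto \langle v,X^0\rangle/\|v\|$ is a strictly positive continuous function on the compact set $\{v\in\cC'':\|v\|=1\}$ and thus admits a positive lower bound $c>0$. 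Setting $\gamma=\arcsin(c/\|X^0\|)>0$ yields the uniform angle bound. The only (mild) obstacle is the odd-parity degeneracy where $\cC'$ contains the line $\R e_{(d+1)/2}$ (Remark \ref{rem:lin-subspace}); both the pointedness and the angle argument handle this by invoking the hyperplane constraint $\sum_i v_i=0$ to eliminate the degenerate coordinate after the sign conditions have forced the others to vanish.
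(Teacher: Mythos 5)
Your proof is correct and follows essentially the same route as the paper's: you establish that $\langle v,X^0\rangle > 0$ for every non-zero $v \in \cC''$ (noting the only vectors in $\cC'$ orthogonal to $X^0$ lie on $\R e_{(d+1)/2}$, which is excluded by the hyperplane constraint when $d$ is odd) and then obtain a uniform $\gamma$ by compactness of the unit sphere in the closed cone $\cC''$. Your version is a bit more explicit — writing $\cC'$ in the odd case as a single convex set rather than a union of two orthants, and giving the inner-product formula and the $\arcsin$ conversion — but the underlying idea is the same.
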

\begin{proof}
The first part of the lemma follows directly from the description of $\cC'$ in Definition \ref{defn:C'}, coupled with Remark \ref{rem:lin-subspace}.

For the assertion on the bound of the angles, we proceed as follows: if $d$ is even, then $\cC'$ is just a single orthant in $\R^d$, hence the angle between any two vectors is bounded by $\pi/2$. Since $X^0$ does not lie on the boundary of $\cC'$, there exists $\gamma>0$ such that the angle between $X^0$ and any vector in $\cC'$ is bounded from above by $\pi/2-\gamma$.

For $d$ odd, recall that $\cC'=\cO^+\cup\cO^-$ and $X^0\in \cO^+\cap \cO^-$, hence the angle between $X^0$ and any vector in $\cC'$ is at most $\pi/2$. Moreover, it is readily seen from the explicit description of $X^0$ and $\cO^\pm$ that the only vectors in $\cC'$ that are orthogonal to $X^0$ are the vectors on the line $\R e_{(d+1)/2}$. Since $e_{(d+1)/2}\not\in\cH$ and $\cC''=\cC'\cap\cH$ is closed, the angle between $X^0$ and non-zero vectors in $\cC''$ must be bounded from above by $\pi/2-\gamma$ for some $\gamma>0$.
\end{proof} 

Note that $\cL\subset\cC''$ since $X^0\in\mathscr{H}$.

\begin{proposition}\label{prop:compactness:proj}
Let $\cM\subseteq \cH$ be a subvectorspace that is orthogonal to $\cL$. Then for every $X\in \cL$ the section
\[
(X+\cM)\cap\cC''
\]
is compact. Moreover, if $\cM$ is of dimension $d-2$, then
\[
\cC'' = \bigcup_{X\in\cL} \left((X+\cM)\cap \cC''\right),
\]
and writing $X= rX^0$ for a suitable $r\ge 0$ we have $(X+\cM)\cap \cC'' = r\left((X^0+\cM)\cap\cC''\right)$. 
\end{proposition}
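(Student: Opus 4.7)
The plan is to exploit a single observation: if $\cM\perp \cL$ and $Y\in X+\cM$ with $X\in\cL$, then the inner product $\langle Y,X^0\rangle$ is fixed at $\langle X,X^0\rangle$. Combined with the angle bound from Lemma~\ref{lem:acute:angle}, which gives $\langle Y,X^0\rangle\geq \sin(\gamma)\|Y\|\,\|X^0\|$ for every non-zero $Y\in\cC''$, this forces $\|Y\|$ to be bounded on the section $(X+\cM)\cap\cC''$.

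First, I would establish compactness. The section is closed as an intersection of two closed sets; writing $X=rX^0$ with $r\geq 0$, the above observation yields $\|Y\|\leq r\|X^0\|/\sin(\gamma)$ for all $Y\in (X+\cM)\cap\cC''$, proving boundedness and hence compactness.

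Next, assuming $\dim\cM=d-2$, the orthogonal decomposition $\cH=\R X^0\oplus\cM$ follows from a dimension count together with $\cM\perp \cL\subset\cH$. Any $Y\in\cC''\subset\cH$ then decomposes uniquely as $Y=tX^0+m$ with $t=\langle Y,X^0\rangle/\|X^0\|^2$, and Lemma~\ref{lem:acute:angle} guarantees $t\geq 0$ (including the trivial case $Y=0$). Thus $tX^0\in\cL$ and $Y\in(tX^0+\cM)\cap\cC''$, yielding the asserted covering.

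Finally, the scaling identity $(rX^0+\cM)\cap\cC''=r\bigl((X^0+\cM)\cap\cC''\bigr)$ for $r>0$ is immediate from the fact that $\cC''$ is a cone: the dilation $Y\mapsto Y/r$ bijects the left side onto the right side and commutes with the affine shift thanks to $r\cdot\cM=\cM$. The degenerate case $r=0$ reduces to the assertion $\cM\cap\cC''=\{0\}$, which is again a direct consequence of Lemma~\ref{lem:acute:angle}, since any non-zero vector in $\cM\cap\cC''$ would be orthogonal to $X^0$ in violation of the strict angle bound. I do not anticipate any substantive obstacle; Lemma~\ref{lem:acute:angle} supplies all of the geometric content, and the remaining verifications are straightforward linear algebra.
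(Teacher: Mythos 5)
Your proof is correct and uses exactly the ingredient the paper invokes, namely the angle bound of Lemma~\ref{lem:acute:angle}; the paper's own proof is simply the one-line remark ``Follows from Lemma~\ref{lem:acute:angle},'' and your argument supplies the straightforward details that this remark leaves implicit. The key estimate $\langle Y,X^0\rangle\geq \sin(\gamma)\,\|Y\|\,\|X^0\|$ for $Y\in\cC''$, together with the fact that $\langle Y,X^0\rangle$ is constant on $X+\cM$, is precisely the intended mechanism, and your treatment of the covering, the scaling identity, and the degenerate case $r=0$ are all sound.
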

\begin{proof}
Follows from Lemma \ref{lem:acute:angle}.
\end{proof}

\subsection{Application to linear forms}

In the following lemma, we shall take $L$ to be a positive linear form in the sense of Definition \ref{def:pos-lin-form}, and $\cM\subseteq \cH$ a vector space of dimension $d-2$ which is orthogonal to $\cL$. 

\begin{lemma}\label{lem:linear:forms:inequ}
There exists a constant $c>0$ such that $L(X+Z)\le c L(X)$ for all $X\in\cL$ and $Z\in\cM$ with $X+Z\in \cC''$.
\end{lemma}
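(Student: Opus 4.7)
The plan is to use the homogeneity of $\cC''$ along the ray $\cL=\R_{\ge 0}X^0$ in order to reduce the inequality to a compactness statement. Writing $X=rX^0$ with $r\ge 0$, Proposition~\ref{prop:compactness:proj} gives $(X+\cM)\cap\cC''=r((X^0+\cM)\cap\cC'')$. When $r=0$ this forces $Z=0$ and the claim is trivial. When $r>0$, I would write $Z=rZ'$ with $Z'\in\cM$ and $X^0+Z'\in\cC''$, so that linearity of $L$ gives $L(X+Z)=rL(X^0+Z')$ and $L(X)=rL(X^0)$.

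The same proposition asserts that $(X^0+\cM)\cap\cC''$ is compact, so by continuity of $L$ there is a finite constant
\[
c_1:=\max_{Y\in(X^0+\cM)\cap\cC''}L(Y).
\]
Combining the two identities above, the desired inequality $L(X+Z)\le c\,L(X)$ follows with $c=c_1/L(X^0)$, provided one checks that $L(X^0)>0$.

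For that last verification, I would invoke the coordinate description of positive linear forms given in the proof of Lemma~\ref{lem:L-max}: writing $L=(L_1,\ldots,L_d)$, positivity forces $L_i>0$ at the indices where $X^0$ has entry $+1$, $L_i<0$ where $X^0$ has entry $-1$, and in the odd case $L_{(d+1)/2}=0$, matching the vanishing coordinate of $X^0$. Thus the terms in $L(X^0)=\sum_i L_i(X^0)_i$ all have the same sign, with at least $d-1\ge 2$ strictly positive contributions, so $L(X^0)>0$.

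There is essentially no obstacle here, as the lemma is a scaling reduction to compactness powered by the conic structure of $\cC''$ along $\cL$. The only mildly delicate point is confirming that in odd dimension the vanishing coordinate of $L$ lines up with the vanishing coordinate of $X^0$, which prevents any accidental cancellation in the lower bound for $L(X^0)$.
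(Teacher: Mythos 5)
Your proof is correct and takes essentially the same approach as the paper: both use Proposition~\ref{prop:compactness:proj} to reduce by scaling along $\cL$ to the compact slice $(X^0+\cM)\cap\cC''$ and then bound $L$ there by compactness (your $c=c_1/L(X^0)$ is the paper's $c=1+\max_{Z'\in D}L(Z')/L(X^0)$ in disguise). Your explicit check that $L(X^0)>0$ is a worthwhile addition that the paper leaves implicit.
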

\begin{proof}
For fixed $X\in\cL$, the set of all $Z\in \cM$ such that $X+Z\in\cC''$ is compact by Proposition \ref{prop:compactness:proj}. In particular there is $D\subseteq \cM$ compact such that $(X^0+ \cM)\cap \cC'' = X^0 + D$. Hence any $X+Z\in\cC''$ with $X\in\cL$ and $Z\in\cM$ can be written as $X+ Z= r X^0 + rZ'$ for some suitable $r\ge0$ and $Z'\in D$. Putting 
\[
c= 1+ \frac{\max_{Z'\in D} L(Z')}{L(X^0)}
\]
 we then get $L(X+ Z) = r L(X^0) + r L(Z') \le c L(X)$.\qedhere
\end{proof}

\subsection{Proof of Proposition \ref{claims}}
We now apply the previous results to our situation. Namely, we identify $\af_0$ with $\R^d$ as usual so that $\af=\af_0^G$ is the hyperplane $\cH$. The cone $\cC''= \cC'\cap\cH$ then coincides with the cone $-\cC^0$ that we considered in Appendix \ref{appendix:X0}, and $X^0$ coincides with the vector of the same name from Appendix \ref{appendix:X0}. 

We consider the linear form $L(Y) = \langle\rho, Y\rangle$, $Y\in\cC$. Let $\af_M\subseteq \af$ be as before. Then $\R X^0\subseteq \af_M$. In fact, if $d$ is even, then $\af_M=\R X^0$ and $\af^M$ is orthogonal to $\af_M$ of dimension $d-2$. If $d$ is odd, then $\af_M$ has dimension $2$. In that case, let $V\subseteq \af_M$ be the orthogonal complement to the line $\R  X^0$ so that $V\oplus \af^M$ is orthogonal to $\R X^0$ of dimension $d-2$. 

As before, if $Y\in\cC''$, we write $Y=Y_M + Y^M$ for unique $Y_M\in \af_M$ and $Y^M\in \af^M$. If $d$ is even, then $Y_M\in\R_{\ge0} X^0$, and if $d$ is odd, we can uniquely write $Y_M = Y_M' + Y_M''$ with unique  $Y_M'\in \R_{\ge 0} X^0$ and $Y_M''\in V$. 

Having set up the notation, we now note that $\langle\rho, \cdot\rangle$ vanishes on $V$, hence part \ref{claim2} of Proposition \ref{claims} is an immediate consequence of Lemma \ref{lem:L-max}.

For \ref{claim1} we first note that by definition of $\cT_\eta$, $\Re\lambda$ is contained in the convex hull of the Weyl group orbit of $\eta\rho$ so that $\langle\Re\lambda, wX^0\rangle \le \eta \langle \rho, X^0 \rangle$ by Lemma~\ref{lem:linear:forms:inequ} for every $w\in W$. This establishes \ref{claim1} for $d$ is even. For $d$ odd, we have $\eta\langle\rho, Y_M\rangle = \eta\langle\rho, Y_M'\rangle\ge \langle\Re\lambda, Y_M ' \rangle$ since $\langle\rho, \cdot \rangle$ vanishes on $V$. Taking $Y_M'=X^0$, the possible $Y_M''$ lie in a compact set so that $\langle\Re\lambda, Y_M'' \rangle$ is bounded from above by $\eta c$ for $c$ some absolute constant. Scaling $Y_M'=X^0$ by a scalar, the compact set of $Y_M''$ gets scaled by the same scalar. Hence there is $C>0$ such that for any $Y\in\cC''$ we have $\langle \Re\lambda,  Y_M\rangle \le \eta C \langle \rho, Y_M\rangle$\qed

\end{appendix}

 \end{document}